\numberwithin{equation}{section}
\newtheorem{theoremstar}{Theorem}
\newtheorem{corstar}[theoremstar]{Corollary}
\newtheorem{theorem}{Theorem}[section]
\newtheorem{lemma}[theorem]{Lemma}
\newtheorem{corollary}[theorem]{Corollary}
\newtheorem{proposition}[theorem]{Proposition}
\theoremstyle{definition}
\newtheorem{remark}[theorem]{Remark}
\newtheorem{definition}[theorem]{Definition}
\newtheorem{problem}{Problem}
\newtheorem*{definition*}{Definition}
\newtheorem*{definitions*}{Definitions}
\newtheorem*{claim*}{Claim}
\newtheorem*{question*}{Question}
\newcommand{\R}{\mathbf{R}}
\newcommand{\C}{\mathbf{C}}
\newcommand{\Z}{\mathbf{Z}}
\newcommand{\F}{\mathbf{F}}
\newcommand{\N}{\mathbf{N}}
\newcommand{\B}{\mathbf{B}}
\newcommand{\T}{\mathbf{T}}
\newcommand{\cU}{\mathcal{U}}
\newcommand{\cR}{\mathcal{R}}
\newcommand{\Ad}{\operatorname{Ad}}
\newcommand{\id}{\text{\rm id}}
\newcommand{\Aut}{\operatorname{Aut}}
\newcommand{\rB}{\mathord{\text{\rm B}}}
\newcommand{\rZ}{\mathord{\text{\rm Z}}}
\newcommand{\rH}{\mathord{\text{\rm H}}}
\newcommand{\rL}{\mathord{\text{\rm L}}}
\newcommand{\rC}{\mathord{\text{\rm C}}}
\newcommand{\dom}{\mathord{\text{\rm dom}}}
\newcommand{\ran}{\mathord{\text{\rm ran}}}
\newcommand{\ap}{\mathord{\text{\rm ap}}}
\newcommand{\rE}{\mathord{\text{\rm E}}}
\newcommand{\Range}{\mathord{\text{\rm Range}}}
\newcommand{\Prob}{\mathord{\text{\rm Prob}}}
\newcommand{\Sd}{\mathord{\text{\rm Sd}}}
\newcommand{\SL}{\mathord{\text{\rm SL}}}
\newcommand{\core}{\mathord{\text{\rm c}}}
\newcommand{\rd}{\mathord{\text{\rm d}}}
\newcommand{\Stab}{\operatorname{Stab}}
\newcommand{\ovt}{\mathbin{\overline{\otimes}}}
\newcommand{\rS}{\mathord{\text{\rm S}}}
\newcommand{\Leb}{\mathord{\text{\rm Leb}}}
\newcommand{\vphi}{\varphi}
\newcommand{\I}{{\rm I}}
\newcommand{\II}{{\rm II}}
\newcommand{\III}{{\rm III}}
\title[Strongly ergodic equivalence relations]{Strongly ergodic equivalence relations: \\ spectral gap and type III invariants}
\begin{document}

\begin{abstract}
We obtain a spectral gap characterization of strongly ergodic equivalence relations on standard measure spaces. We use our spectral gap criterion to prove that a large class of skew-product equivalence relations arising from measurable $1$-cocycles with values into locally compact abelian groups are strongly ergodic. By analogy with the work of Connes on full factors, we introduce the Sd and $\tau$ invariants for type ${\rm III}$ strongly ergodic equivalence relations. As a corollary to our main results, we show that for any type ${\rm III_1}$ ergodic equivalence relation $\mathcal R$, the Maharam extension $\core(\mathcal R)$ is strongly ergodic if and only if $\mathcal R$ is strongly ergodic and the invariant $\tau(\mathcal R)$ is the usual topology on $\R$. We also obtain a structure theorem for almost periodic strongly ergodic equivalence relations analogous to Connes' structure theorem for almost periodic full factors. Finally, we prove that for arbitrary strongly ergodic free actions of bi-exact groups (e.g.\ hyperbolic groups), the Sd and $\tau$ invariants of the orbit equivalence relation and of the associated  group measure space von Neumann factor coincide.
\end{abstract}

\address{Laboratoire de Math\'ematiques d'Orsay\\ Universit\'e Paris-Sud\\ CNRS\\ Universit\'e Paris-Saclay\\ 91405 Orsay\\ FRANCE}

\author{Cyril Houdayer}
\email{cyril.houdayer@math.u-psud.fr}

\author{Amine Marrakchi}
\email{amine.marrakchi@math.u-psud.fr}

\author{Peter Verraedt}
\email{peter.verraedt@math.u-psud.fr}

\thanks{Research supported by ERC Starting Grant GAN 637601}

\subjclass[2010]{37A20, 37A40, 46L10, 46L36}

\keywords{Equivalence relations; Full groups; Maharam extension; Skew-product; Spectral gap; Strong ergodicity; von Neumann algebras}

\maketitle

\section{Introduction and statement of the main results}

Following \cite{Sc79}, a nonsingular action $\Gamma \curvearrowright (X, \mu)$ of a countable discrete group $\Gamma$ on a standard probability space $(X, \mu)$ is {\em strongly ergodic} if any sequence of $\Gamma$-almost invariant measurable subsets is trivial, i.e.\ for any sequence of measurable subsets $A_n \subset X$ such that $\lim_n \mu(A_n \triangle \gamma A_n) = 0$ for every $\gamma \in \Gamma$, we have $\lim_n \mu(A_n)(1 - \mu(A_n)) = 0$. Likewise, a nonsingular equivalence relation $\mathcal R$ on a standard probability space $(X, \mu)$ is {\em strongly ergodic} if for any sequence of measurable subsets $A_n \subset X$ such that $\lim_n \mu(A_n \triangle g A_n) = 0$ for every $g \in [\mathcal R]$, we have $\lim_n \mu(A_n)(1 - \mu(A_n)) = 0$. It is easy to see that the notion of strong ergodicity only depends on the measure class of $\mu$. By Rokhlin's Lemma, a hyperfinite or equivalently amenable (by \cite{CFW81}) equivalence relation on a diffuse standard probability space is never strongly ergodic (see \cite[Proposition 2.2]{Sc79}). In particular, a nonsingular action $\Gamma \curvearrowright (X, \mu)$ of an amenable countable discrete group on a diffuse standard probability space is never strongly ergodic.

The notion of strong ergodicity is related to the notion of fullness for von Neumann factors. Following \cite{Co74}, a factor $M$ with separable predual is {\em full} if any uniformly bounded centralizing sequence is trivial, i.e.\ for any uniformly bounded sequence $x_n \in M$ such that $\lim_n \|x_n \psi - \psi x_n\| = 0$ for every $\psi \in M_\ast$, there exists a bounded sequence $\lambda_n \in \C$ such that $x_n - \lambda_n 1 \to 0$ $\ast$-strongly. By the work of Connes \cite{Co74, Co75b} and Haagerup \cite{Ha85}, a hyperfinite or equivalently amenable diffuse factor with separable predual is never full. Following \cite{FM75}, denote by $\rL(\mathcal R)$ the factor associated with the ergodic equivalence relation $\mathcal R$ on the standard probability space $(X, \mu)$. If $\rL(\mathcal R)$ is full then $\mathcal R$ is strongly ergodic. The converse is however not true as demonstrated by Connes and Jones \cite{CJ81}.

For nonsingular equivalence relations $\mathcal R(\Gamma \curvearrowright X)$ arising from nonsingular actions of countable discrete group $\Gamma \curvearrowright (X, \mu)$ on standard probability spaces, strong ergodicity of the action $\Gamma \curvearrowright (X, \mu)$ is equivalent to strong ergodicity of the orbit equivalence relation $\mathcal R(\Gamma \curvearrowright X)$. As observed by Schmidt in \cite[Proposition 2.10]{Sc79}, for any countable discrete group $\Gamma$ and any probability measure preserving (pmp) action $\Gamma \curvearrowright (X, \mu)$, if the Koopman unitary representation $\pi : \Gamma \to \mathcal U(\rL^2(X, \mu) \ominus \C \mathbf 1_X)$ has {\em spectral gap} in the sense that $\pi$ does not weakly contain the trivial representation, then the orbit equivalence relation $\mathcal R(\Gamma \curvearrowright X)$ is strongly ergodic. The converse is however not true as explained in \cite[Example 2.7]{Sc80}. Any nonamenable countable discrete group $\Gamma$ admits a strongly ergodic free pmp action, namely the plain Bernoulli action $\Gamma \curvearrowright ([0, 1]^\Gamma, \Leb^{\otimes \Gamma})$. We refer the reader to \cite{HI15, Oz16} and the references therein for various examples of type ${\rm III}$ strongly ergodic free actions  associated with free groups (\cite{HI15}) and lattices in simple connected Lie groups with finite center (\cite{Oz16}).

Our first main result provides a spectral gap characterization of strongly ergodic equivalence relations of type ${\rm II_1}$ and of type ${\rm III_\lambda}$ for $\lambda \in (0, 1]$. Recall in that respect that a strongly ergodic equivalence relation can never be of type ${\rm III_0}$ (see e.g.\ \cite[Remark 2.9]{Sc79}). Before stating Theorem \ref{thmstar:spectral-gap}, we need to introduce some terminology. A nonsingular automorphism $\theta$ of a standard probability space $(X,\mu)$ is said to be \emph{$\mu$-bounded} if the function $\log \left( \frac{\mathrm{d} (\mu \circ \theta)}{\mathrm{d}\mu} \right)$ is bounded. In this case, the map $L_\theta : f \mapsto \theta(f)$ defines a bounded operator in $\B(\rL^{2}(X,\mu))$.

\begin{theoremstar}\label{thmstar:spectral-gap}
Let $\mathcal R$ be any ergodic equivalence relation on a standard probability space $(X, \mu)$. Assume that $\mathcal R$ either preserves $\mu$ or is of type ${\rm III}$. Then the following assertions are equivalent:
\begin{itemize}
\item [$(\rm i)$] $\mathcal R$ is strongly ergodic.
\item [$(\rm ii)$] There exist a constant $\kappa > 0$ and a finite family $\theta_1, \dots, \theta_n$ of $\mu$-bounded elements in $[\mathcal R]$ such that
\begin{equation}\label{eq:spectral-gap}
\forall f \in \rL^2(X, \mu), \quad \|f - \mu(f)\|_2^2 \leq \kappa \sum_{k = 1}^n \|\theta_k(f) - f\|_2^2.
\end{equation}
\end{itemize}
\end{theoremstar}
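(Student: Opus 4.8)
The implication $(\mathrm{ii}) \Rightarrow (\mathrm{i})$ is the elementary direction, and I would dispatch it first. Given a sequence $A_n \subset X$ with $\mu(A_n \triangle g A_n) \to 0$ for every $g \in [\mathcal R]$, apply \eqref{eq:spectral-gap} to $f = \mathbf 1_{A_n}$. Since $\theta_k(\mathbf 1_{A_n}) = \mathbf 1_{\theta_k A_n}$ and $\mu(\mathbf 1_{A_n}) = \mu(A_n)$, one has $\|\theta_k(\mathbf 1_{A_n}) - \mathbf 1_{A_n}\|_2^2 = \mu(\theta_k A_n \triangle A_n)$ and $\|\mathbf 1_{A_n} - \mu(A_n)\|_2^2 = \mu(A_n)(1 - \mu(A_n))$, so that
\[
\mu(A_n)(1 - \mu(A_n)) \le \kappa \sum_{k=1}^n \mu(\theta_k A_n \triangle A_n) \to 0,
\]
which is exactly strong ergodicity.

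For $(\mathrm{i}) \Rightarrow (\mathrm{ii})$ I would argue by contraposition and first recast $(\mathrm{ii})$ in terms of almost invariant vectors. Fixing a countable family of $\mu$-bounded elements of $[\mathcal R]$ generating $\mathcal R$, if no finite subfamily and constant satisfy \eqref{eq:spectral-gap}, then testing against the finite subfamilies and letting $\kappa \to \infty$, a diagonal extraction produces a sequence $f_n \in \rL^2(X,\mu)$ with $\mu(f_n) = 0$, $\|f_n\|_2 = 1$ and $\|\theta(f_n) - f_n\|_2 \to 0$ for every $\theta$ in the family (and hence, by propagation through products with controlled measure distortion, for every $\mu$-bounded $\theta \in [\mathcal R]$). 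Replacing $f_n$ by its real or imaginary part, I may assume $f_n$ is real. The problem is then reduced to the following conversion: from such an almost invariant sequence of vectors, manufacture a \emph{nontrivial} almost invariant sequence of sets, contradicting $(\mathrm{i})$.

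The first tool is the layer-cake identity. For the level sets $A_n^t = \{f_n > t\}$ one has $\theta(\mathbf 1_{A_n^t}) = \mathbf 1_{\theta A_n^t}$ and
\[
\int_{\R} \mu(A_n^t \triangle \theta A_n^t)\, \mathrm dt = \|f_n - \theta(f_n)\|_1 \le \|f_n - \theta(f_n)\|_2 \to 0,
\]
so that on average over $t$ the level sets are almost invariant. The difficulty --- and I expect this to be the main obstacle --- is that the vectors produced above may concentrate on sets of vanishing measure (a \emph{heavy-tailed} $f_n$ supported on a set $B_n$ with $\mu(B_n) \to 0$ but $\mu(\theta B_n \triangle B_n) = o(\mu(B_n))$), in which case every nonnegligible level set is trivial and the naive extraction yields only null almost invariant sets. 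This is precisely the phenomenon separating $(\mathrm{ii})$ from the spectral gap of a single generating group in the sense of Schmidt, and it is where the full group $[\mathcal R]$ must enter in an essential way.

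To overcome this I would prove a spreading lemma: a sequence $B_n$ of small sets that is relatively almost invariant under all of $[\mathcal R]$ forces a genuinely nontrivial almost invariant sequence. Using ergodicity of $\mathcal R$ and the richness of the full group, one chooses $\mu$-bounded elements $g_1^{(n)}, \dots, g_{N_n}^{(n)} \in [\mathcal R]$ with $N_n \approx \lfloor 1/(2\mu(B_n)) \rfloor$ spreading $B_n$ onto pairwise disjoint copies, and sets $U_n = \bigsqcup_i g_i^{(n)} B_n$, so $\mu(U_n) \to 1/2$. The inclusion $\theta U_n \triangle U_n \subseteq \bigcup_i(\theta g_i^{(n)} B_n \triangle g_i^{(n)} B_n)$ together with the $\mu$-boundedness of the $g_i^{(n)}$ reduces the almost invariance of $U_n$ to controlling $\mu(h\, B_n \triangle B_n)$ for the conjugates $h = (g_i^{(n)})^{-1}\theta g_i^{(n)}$, summed over the $N_n \approx 1/\mu(B_n)$ indices. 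The heart of the matter is thus to upgrade the pointwise relative almost invariance of $B_n$ to a bound uniform over these growing families of conjugates; I would obtain this uniformity by passing to the ultrapower $\rL^2(X,\mu)^\omega$, where invariance under $[\mathcal R]$ becomes exact and stable under the relevant operations, exploiting Baire-category/Namioka-type continuity of the action of the Polish group $[\mathcal R]$. Once $U_n$ is shown almost invariant it is nontrivial, contradicting $(\mathrm{i})$. The construction is carried out uniformly in the measure-preserving and type ${\rm III}$ cases --- in the latter the $\mu$-boundedness hypothesis guarantees that all operators $L_\theta$ involved are bounded on $\rL^2(X,\mu)$ --- with the type ${\rm II_\infty}$ case being the one excluded by the dichotomy in the hypothesis.
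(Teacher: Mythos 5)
Your direction $(\mathrm{ii}) \Rightarrow (\mathrm{i})$ and your reduction of $(\mathrm{i}) \Rightarrow (\mathrm{ii})$ to the existence of \emph{small} almost invariant sets (via the layer-cake identity, which is exactly the Namioka-type lemma the paper uses) match the paper, and you have correctly identified the crux: a sequence $B_n$ with $\mu(B_n) \to 0$ and $\mu(\theta B_n \triangle B_n) = o(\mu(B_n))$ for every $\mu$-bounded $\theta \in [\mathcal R]$ does not directly contradict strong ergodicity. The gap is in your proposed resolution. Your spreading lemma takes $U_n = \bigsqcup_{i} g^{(n)}_i B_n$ with $N_n \sim 1/\mu(B_n)$ translates; but a translate $g^{(n)}_i B_n$ of an almost invariant set is not itself almost invariant, and, as you note, the estimate reduces to bounding $\sum_{i=1}^{N_n} \mu\bigl(h_{i,n} B_n \triangle B_n\bigr)$ for the conjugates $h_{i,n} = (g^{(n)}_i)^{-1}\theta g^{(n)}_i$. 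Since the $g^{(n)}_i$ are chosen \emph{after} $B_n$ (their number depends on $\mu(B_n)$) and form families of cardinality tending to infinity, you need $\mu(h B_n \triangle B_n)/\mu(B_n) \to 0$ \emph{uniformly} over these growing, $n$-dependent families. Pointwise relative almost invariance (for each fixed $\mu$-bounded $h$) gives no such uniformity, and the ultrapower does not supply it: in $\rL^\infty(X)^\omega$ the class $(\mathbf 1_{B_n})^\omega$ is simply $0$ because $\mu(B_n)\to 0$, while any renormalized object carries the relative invariance only against elements fixed before the extraction. The conjugates $h_{i,n}$ are completely uncontrolled in the Polish group $[\mathcal R]$, so no equicontinuity or Baire-category argument is available to promote pointwise to uniform decay.

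The paper circumvents this uniformity problem by a different mechanism. First, Lemma 3.8 shows that if small almost invariant sets exist, then for every nonnull corner $Y$ and every finite family $\theta_1,\dots,\theta_n$ one can find a \emph{new} small set $B \subset Y$ with $\mu(B \cap \theta_k(Y\setminus B)) + \mu((Y\setminus B)\cap\theta_k(B)) \le \varepsilon\mu(B)$: not a translate of the old one, but the trace $A_i \cap Y$ of the original net, shown to have mass comparable to $\mu(A_i)$ via a weak${}^{*}$ limit functional (this is where the pmp versus type $\mathrm{III}$ dichotomy and the appendix on $\mu$-bounded partial isomorphisms enter). Then comes a maximality argument in the complete Boolean algebra $\mathfrak{P}(X)$: the family of sets $A$ with $\mu(A)\le \tfrac12$ and $\mu(A\triangle\theta_k (A))\le\varepsilon\mu(A)$ for all $k$ is closed, hence inductive; a maximal element of measure $<\tfrac12$ could be enlarged by adjoining such a $B$ in the complementary corner, because the relative errors are additive, $\varepsilon\mu(A)+\varepsilon\mu(B)=\varepsilon\mu(A\cup B)$, against the \emph{same fixed} finite family. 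The maximal element therefore has measure $\tfrac12$, contradicting strong ergodicity. The point is that the error being multiplicative in the measure makes successive pieces compatible without any uniformity over group elements. If you replace your translates $g^{(n)}_i B_n$ by freshly chosen relatively almost invariant pieces in each successive corner, you will essentially have rediscovered the paper's exhaustion argument.
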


When $\mathcal R$ preserves $\mu$, the inequality in \eqref{eq:spectral-gap} shows that the Koopman unitary representation $\pi : [\mathcal R] \to \mathcal U(\rL^2(X, \mu) \ominus \C \mathbf 1_X)$ has spectral gap. Thus, in the pmp case and working inside the ambient full group $[\mathcal R]$, Theorem \ref{thmstar:spectral-gap} provides a converse to Schmidt's observation \cite[Proposition 2.10]{Sc79}. As we will see later, the main interest of Theorem \ref{thmstar:spectral-gap} is also to provide a spectral gap characterization for arbitrary strongly ergodic equivalence relations of type ${\rm III}$. Note that the inequality in \eqref{eq:spectral-gap} can be reformulated by saying that the eigenvalue $0$ is a simple isolated point in the spectrum of the positive selfadjoint bounded operator $\sum_{k = 1}^n |L_{\theta_k} - \mathbf 1|^2$. Let us point out that the spectral gap characterization of strongly ergodic equivalence relations in Theorem \ref{thmstar:spectral-gap} is analogous to Connes' spectral gap characterization of full factors of type ${\rm II_1}$ (see \cite[Theorem 2.1]{Co75b}). We refer the reader to \cite[Theorem A]{Ma16} and \cite[Theorem 3.2]{HMV16} for very recent spectral gap characterizations of full factors of type ${\rm III}$.

We next use our spectral gap criterion to prove that a large class of skew-product equivalence relations are strongly ergodic. Before stating Theorem \ref{thmstar:skew-product}, we need to introduce some more terminology. Let $\cR$ be any nonsingular equivalence relation on a standard measure space $(X, \mu)$, $G$ any locally compact second countable abelian group and $\Omega \in \rZ^1(\cR,G)$ any measurable $1$-cocycle. On $X \times G$, put the measure class obtained as the product of the measure class of $X$ and the Haar measure class of $G$. The {\em skew-product} of $\cR$ by $\Omega$ is the equivalence relation $\cR \times_\Omega G$ on $X \times G$ defined by 
\begin{align*}
((x,g), (y,h)) \in \cR \times_\Omega G \quad\text{if and only if}\quad (x, y) \in \cR \; \text{ and } \; \Omega(x,y) = gh^{-1}
\end{align*}
for all $x,y\in X$ and all $g,h\in G$. Let $\widehat{G}$ be the Pontryagin dual of $G$ and introduce the map $\widehat{\Omega} : \widehat{G} \rightarrow \rZ^{1}(\cR, \mathbf T)$ defined by $\widehat{\Omega}(p)(x,y)=\langle p , \Omega(x,y) \rangle$ for a.e.\ $(x,y) \in \cR$, where $\langle \, \cdot \, , \cdot \, \rangle : \widehat{G} \times G \rightarrow \T$ is the duality pairing. Note that $\widehat{\Omega}$ is a continuous group homomorphism. We also introduce the continuous homomorphism $[ \widehat{\Omega}] : \widehat{G} \rightarrow \rH^{1}(\cR, \mathbf T)$ which sends $p \in \widehat{G}$ to the cohomology class $[ \widehat{\Omega}(p)] \in \rH^{1}(\cR, \mathbf T)$.

\begin{theoremstar}\label{thmstar:skew-product}
Let $\cR$ be any ergodic equivalence relation on a standard measure space $X$. Let $\Omega \in \rZ^1(\cR,G)$ be any measurable $1$-cocycle with values into a locally compact second countable abelian group $G$. Consider the following assertions:
\begin{itemize}
\item [$(\rm i)$] The skew-product equivalence relation $\cR \times_\Omega G$ is strongly ergodic.
\item [$(\rm ii)$] The equivalence relation $\cR$ is strongly ergodic and the map $[\widehat{\Omega}] : \widehat{G} \rightarrow \rH^1(\cR, \mathbf T)$ is a homeomorphism onto its range.
\end{itemize}
Then $(\rm i) \Rightarrow (\rm ii)$ and if $G$ contains a lattice, we also have $(\rm ii) \Rightarrow (\rm i)$.
\end{theoremstar}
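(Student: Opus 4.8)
\section*{Proof proposal}

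The plan is to analyze the skew product through the translation action of $G$ on $X\times G$ and its Plancherel decomposition over the dual $\widehat G$. The key observation is that $G$ acts by $(x,g)\mapsto(x,tg)$, commuting with $\mathcal R\times_\Omega G$, so the canonical lift of any $\theta\in[\mathcal R]$, namely $\tilde\theta(x,g)=(\theta x,\Omega(\theta x,x)\,g)$, lies in $[\mathcal R\times_\Omega G]$, preserves each Fourier fiber $\{\,b(x)\langle p,g\rangle : b\in\rL^2(X)\,\}$, and acts on it as the $\widehat\Omega(p)$-twisted Koopman operator $b\mapsto\widehat\Omega(p)(\theta^{-1}\,\cdot\,,\cdot)\,b(\theta^{-1}\,\cdot)$, up to the Radon--Nikodym factor. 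Thus $\rL^2(X\times G)\cong\int_{\widehat G}^\oplus\rL^2(X)\,dp$ and the full group acts fiberwise as $[\mathcal R]$ twisted by the family of cocycles $\widehat\Omega(p)$. In particular $F(x,g)=b(x)\langle p,g\rangle$ is invariant exactly when $\widehat\Omega(p)(x,y)=\overline{b(x)}\,b(y)$ is a coboundary; this is the bridge between (almost-)invariant vectors in the fibers and the (approximate) vanishing of $[\widehat\Omega(p)]$ in $\rH^1(\mathcal R,\mathbf T)$.

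For $(\mathrm i)\Rightarrow(\mathrm{ii})$ I would argue in three steps. First, strong ergodicity passes to the factor $\mathcal R$: any $[\mathcal R]$-almost-invariant sequence $B_n\subset X$ pulls back to the $G$-invariant, hence $[\mathcal R\times_\Omega G]$-almost-invariant, sequence $B_n\times G$, so $(\mathrm i)$ forces $B_n$ to be trivial. Second, injectivity of $[\widehat\Omega]$ follows from ergodicity: if $[\widehat\Omega(p)]=0$ with $p\neq0$ then $F(x,g)=b(x)\langle p,g\rangle$ is a nonconstant invariant function, contradicting ergodicity of the skew product. Third, for the homeomorphism-onto-range property, both groups being second countable it suffices to check sequential continuity of the inverse at the identity, i.e.\ that $[\widehat\Omega(p_n)]\to0$ implies $p_n\to0$; arguing by contraposition, if $p_n\not\to0$ while $[\widehat\Omega(p_n)]\to0$ then there are $b_n:X\to\mathbf T$ with $\widehat\Omega(p_n)(x,y)\,b_n(x)\,\overline{b_n(y)}\to1$ in measure on $\mathcal R$, and thresholding $F_n(x,g)=b_n(x)\langle p_n,g\rangle$ produces a nontrivial $[\mathcal R\times_\Omega G]$-almost-invariant sequence of sets, contradicting $(\mathrm i)$.

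For $(\mathrm{ii})\Rightarrow(\mathrm i)$ I would establish strong ergodicity of the skew product directly from the definition. Given a $[\mathcal R\times_\Omega G]$-almost-invariant sequence $\mathbf 1_{A_n}$, I decompose it along $\widehat G$. Its component at $p=0$ is the $G$-average $\int_G\mathbf 1_{A_n}(\,\cdot\,,g)\,dg$, which is $[\mathcal R]$-almost-invariant and hence, by strong ergodicity of $\mathcal R$, nearly constant: this controls the base direction. The remaining mass lives in the fibers $p\neq0$, where the twisted representation has no invariant vector by injectivity; the homeomorphism-onto-range hypothesis should upgrade this to a spectral gap bounded below on each region $\{|p|\ge\varepsilon\}$, forcing the transverse mass to vanish and $\mathbf 1_{A_n}$ to be trivial.

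The main obstacle---and precisely where the lattice hypothesis enters---is making this transverse spectral gap uniform across the continuum of fibers, since the pointwise gap degenerates as $p\to0$ and the region $\{|p|\ge\varepsilon\}$ is noncompact. I would resolve this by using the lattice $\Lambda\subset G$ to split the problem: pushing $\Omega$ through $G\to G/\Lambda$ replaces the continuous dual by the discrete dual $\widehat{G/\Lambda}=\Lambda^\perp$, so the direct integral over the compact quotient becomes an honest direct sum on which Theorem \ref{thmstar:spectral-gap} (applied to $\mathcal R$, which is strongly ergodic) yields a uniform fiberwise gap, while the residual countable fiber $\Lambda$ is treated by the classical countable-group strong-ergodicity argument. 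Combining the base estimate coming from $\mathcal R$ with this uniform transverse gap shows that every almost-invariant sequence in $\mathcal R\times_\Omega G$ is trivial.
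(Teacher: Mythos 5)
Your overall architecture coincides with the paper's: the direction $(\rm i) \Rightarrow (\rm ii)$ via pulling back almost-invariant sets and reading off $\widehat{\Omega}(p_n)$-coboundaries as almost-invariant functions $u_n^{-1}\otimes p_n$ on $X\times G$ is essentially the paper's argument, and for $(\rm ii)\Rightarrow(\rm i)$ the paper likewise passes to the compact quotient $K=G/\Lambda$, Fourier-decomposes over the discrete dual, and invokes the spectral gap from Theorem \ref{thmstar:spectral-gap}. However, two steps that you treat as routine are where the real work lies. First, in the compact case you need a spectral gap for the \emph{twisted} Koopman operators $f\mapsto \overline{\widehat{\Omega}(\gamma)(\theta_k)}\,\theta_k(f)$ that is uniform over all $\gamma$ with $[\widehat{\Omega}(\gamma)]$ outside a fixed neighbourhood of the identity in $\rH^1(\cR)$; Theorem \ref{thmstar:spectral-gap} only gives the untwisted gap, and upgrading it to the uniform twisted statement is the content of a separate lemma in the paper (Lemma \ref{gap_cocycles}), proved by a contradiction argument that extracts a polar part $u_i$ from an almost-twisted-invariant sequence and contradicts $[c_i]\notin\mathcal V$. (There is also a preliminary reduction to a corner $Y\subset X$, using Corollary \ref{corner_homology}, to exclude type ${\rm II_\infty}$, since the spectral gap theorem is only available when $\cR$ preserves a probability measure or is of type ${\rm III}$.)

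The more serious gap is your final sentence: "the residual countable fiber $\Lambda$ is treated by the classical countable-group strong-ergodicity argument." There is no such classical argument that applies here. After the compact step one knows that $\cR\times_\Theta K$ is strongly ergodic, hence so is its lift $\mathcal Q$ to $X\times G$; but $\cR\times_\Omega G$ is a proper \emph{subrelation} of $\mathcal Q$ (the difference being the translation action of the lattice $\Lambda$), and strong ergodicity does not pass to subrelations. Indeed, $\Lambda$ is amenable, so its F\o lner sets produce configurations in $\{0,1\}^\Lambda$ that are genuinely almost invariant under the residual $\Lambda$-permutation, and one must show these only yield measure-theoretically trivial almost-invariant sets for $\cR\times_\Omega G$. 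The paper isolates exactly this difficulty as Proposition \ref{amenable}: if a countable amenable group acts by automorphisms of an equivalence relation, then the subrelation is strongly ergodic if and only if the generated relation is. Its proof is not soft — it runs through the ultraproduct algebra $\rL^\infty(X)^\omega$, shows the asymptotically invariant subalgebra is diffuse, and uses non-strong-ergodicity of amenable actions on diffuse spaces to manufacture jointly almost-invariant sets. Without this descent lemma (or a substitute), your proof of $(\rm ii)\Rightarrow(\rm i)$ in the lattice case is incomplete.
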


It is unclear whether the assumption that $G$ contains a lattice (i.e.\ a discrete cocompact subgroup) is really needed but it already covers all the most common cases: compact groups, discrete groups, connected groups and all their direct products.

\begin{problem}
Prove that the equivalence $(\rm ii) \Leftrightarrow (\rm i)$ holds for all locally compact abelian groups.
\end{problem}

We also point out the fact that $\rZ^{1}(\cR,\T)$ can be identified with the group $\Aut(\rL(\cR)/\rL^{\infty}(X))$ of all automorphisms of $\rL(\cR)$ that fix $\rL^{\infty}(X)$ pointwise (see Lemma \ref{lemma: link}). Hence the homomorphism $ \widehat{\Omega}$ defines an action of $\widehat{G}$ on $\rL(\cR)$ and this allows us to identify $\rL(\cR \times_\Omega G)$ with the crossed product $\rL(\cR) \rtimes_{\widehat{\Omega}} \widehat{G}$. Therefore, in the case when $G$ is compact (or equivalently $\widehat{G}$ is discrete), Theorem \ref{thmstar:skew-product} can be seen as an analogue of a theorem of Jones on fullness of crossed products \cite{Jo81} and its recent generalization to arbitrary factors \cite[Theorem B]{Ma16}. These results inspired Theorem \ref{thmstar:skew-product}.

We now apply Theorem \ref{thmstar:skew-product} to the structure of type ${\rm III}$ strongly ergodic equivalence relations and we introduce two new orbit equivalence invariants, namely the Sd and $\tau$ invariants. Let $\mathcal R$ be any type ${\rm III}$ ergodic equivalence relation on a standard measure space $(X, \mu)$. Denote by $\delta_\mu \in \rZ^1(\mathcal R, \R^+_0)$ the Radon-Nikodym $1$-cocycle. Its cohomology class $[\delta_\mu] \in \rH^1(\mathcal R, \R^+_0)$ only depends on the measure class of $\mu$. The skew-product equivalence relation $\core(\mathcal R) = \mathcal R \times_{\delta_\mu} \R^+_0$ is called the {\em Maharam extension} of $\mathcal R$. Then $\core(\mathcal R)$ is an infinite measure preserving equivalence relation that only depends on the measure class of $\mu$ up to canonical isomorphism. Moreover, $\mathcal R$ is of type ${\rm III_1}$ if and only if $\core(\mathcal R)$ is ergodic. Let us now assume that $\mathcal R$ is moreover strongly ergodic. Then $\rB^1(\mathcal R, \mathbf T) \subset \rZ^1(\mathcal R, \mathbf T)$ is a closed subgroup and therefore $\rH^1(\mathcal R, \mathbf T)$ is a Hausdorff Polish group (see \cite[Proposition 2.3]{Sc79}). By analogy with the work of Connes on full factors of type ${\rm III}$ \cite{Co74}, we introduce the invariant $\tau(\mathcal R)$ as the weakest topology on $\R$ that makes the map $[\widehat{\delta_\mu}] : \R \to \rH^1(\mathcal R, \mathbf T) : t \mapsto [\delta_\mu^{{\bf i}t}]$ continuous. Applying Theorem \ref{thmstar:skew-product} to the locally compact abelian group $G = \R^+_0$ and to the Radon-Nikodym $1$-cocycle $\Omega = \delta_\mu \in \rZ^1(\mathcal R, \R^+_0)$, we obtain the following characterization of strong ergodicity for the Maharam extension of any type ${\rm III_1}$ ergodic equivalence relation.

\begin{corstar}\label{corstar:maharam}
Let $\cR$ be any type $\III_1$ ergodic equivalence relation on a standard measure space. Then the Maharam extension $\core(\cR)$ is strongly ergodic if and only if $\cR$ is strongly ergodic and $\tau(\cR)$ is the usual topology on $\R$.
\end{corstar}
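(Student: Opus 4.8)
The plan is to apply Theorem~\ref{thmstar:skew-product} directly to the locally compact abelian group $G = \R^+_0$ and the Radon-Nikodym cocycle $\Omega = \delta_\mu$, as the text preceding the corollary already indicates. The first observation is that $G = \R^+_0$ (the multiplicative group of positive reals) is isomorphic to $(\R, +)$ and therefore contains a lattice (e.g.\ a copy of $\Z$ via $t \mapsto e^t$); consequently \emph{both} implications $(\rm i) \Leftrightarrow (\rm ii)$ of Theorem~\ref{thmstar:skew-product} are available to us. Since $\core(\cR) = \cR \times_{\delta_\mu} \R^+_0$ by definition of the Maharam extension, strong ergodicity of $\core(\cR)$ is, by that theorem, equivalent to the conjunction of two conditions: that $\cR$ itself is strongly ergodic, and that the dual homomorphism $[\widehat{\delta_\mu}] : \widehat{\R^+_0} \to \rH^1(\cR, \mathbf T)$ is a homeomorphism onto its range.

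**The bulk of the work** is then purely a matter of dualizing the group $G = \R^+_0$ and matching the resulting map with the definition of $\tau(\cR)$. Under the identification $\R^+_0 \cong (\R,+)$, the Pontryagin dual is again $\R$, and the duality pairing is $\langle s, e^t \rangle = e^{{\bf i}st}$; thus the dual parameter of the Maharam cocycle is exactly the real parameter $t$ appearing in the one-parameter family $t \mapsto [\delta_\mu^{{\bf i}t}]$. In other words, after this identification the map $[\widehat{\delta_\mu}] : \widehat{G} \to \rH^1(\cR,\mathbf T)$ coincides with the map $\R \to \rH^1(\cR,\mathbf T) : t \mapsto [\delta_\mu^{{\bf i}t}]$ used to define $\tau(\cR)$. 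It therefore remains to argue that ``$[\widehat{\delta_\mu}]$ is a homeomorphism onto its range'' is the same statement as ``$\tau(\cR)$ is the usual topology on $\R$''.

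**This last equivalence** is the only genuinely non-formal point, and it is where I would spend the most care. By definition, $\tau(\cR)$ is the weakest (coarsest) topology on $\R$ making $[\widehat{\delta_\mu}]$ continuous, i.e.\ the initial topology pulled back from $\rH^1(\cR,\mathbf T)$. The map $[\widehat{\delta_\mu}]$ is a continuous injective homomorphism from $(\R, \text{usual})$ into the Hausdorff Polish group $\rH^1(\cR,\mathbf T)$ (here I invoke the stated fact that strong ergodicity of $\cR$ forces $\rB^1(\cR,\mathbf T)$ to be closed, so $\rH^1(\cR,\mathbf T)$ is Hausdorff Polish). Injectivity should follow because $\cR$ is of type $\III_1$, so no nontrivial power $\delta_\mu^{{\bf i}t}$ is a coboundary; this is exactly what rules out a type $\III_\lambda$ degeneracy. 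Granting this, the pullback topology $\tau(\cR)$ always refines nothing beyond the usual topology and is always coarser than or equal to it, and one checks that $\tau(\cR)$ equals the usual topology precisely when $[\widehat{\delta_\mu}]$ is a homeomorphism onto its range: if the map is a homeomorphism onto its image, the initial topology recovers the usual one, while conversely if $\tau(\cR)$ is the usual topology then the continuous bijection $\R \to [\widehat{\delta_\mu}](\R)$ is automatically open onto its range. I expect the main obstacle to be making the passage between the two topological formulations clean — in particular verifying that injectivity holds and that the initial topology coincides with the usual topology exactly under the homeomorphism-onto-range condition — rather than any hard analysis, since all the substantive content has already been packaged into Theorem~\ref{thmstar:skew-product}.
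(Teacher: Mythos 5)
Your proposal is correct and follows exactly the route the paper takes: the paper's entire proof of this corollary is the single sentence that it is ``a direct application of Theorem \ref{thmstar:skew-product}'' with $G = \R^+_0$ and $\Omega = \delta_\mu$, and you have correctly filled in the routine details (the lattice in $\R^+_0 \cong \R$, the identification of $[\widehat{\delta_\mu}]$ with $t \mapsto [\delta_\mu^{\mathbf{i}t}]$, and the equivalence between ``homeomorphism onto its range'' and ``the initial topology $\tau(\cR)$ is the usual one,'' with injectivity supplied by the type $\III_1$ assumption). No gaps.
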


Let us point out that Corollary \ref{corstar:maharam} is analogous to \cite[Theorem C]{Ma16} where it was shown that for any factor $M$ with separable predual, the continuous core $\core(M)$ is full if and only if $M$ is full and $\tau(M)$ is the usual topology on $\R$.

We next turn to investigating a large class of type ${\rm III}$ strongly ergodic equivalence relations for which the $\tau$ invariant is not the usual topology on $\R$. Let $\mathcal R$ be any type ${\rm III}$ ergodic equivalence relation on a standard measure space $(X, \mu)$. By analogy with the work of Connes on almost periodic factors of type ${\rm III}$ \cite{Co74}, we say that $\mathcal R$ is {\em almost periodic} if there exists a $\sigma$-finite measure $\nu$ on $X$ that is equivalent to $\mu$ and for which the Radon-Nikodym $1$-cocycle $\delta_\nu \in \rZ^1(\mathcal R, \R^+_0)$ essentially takes values into a countable subset of $\R^+_0$ that we denote by $\Range(\delta_\nu)$. Such a measure $\nu$ is called \emph{almost periodic} and the set of all almost periodic measures is denoted by $\mathcal{M}_{\ap}(X,\cR)$. We then introduce the invariant $\Sd(\mathcal R)$ as the intersection of $\Range(\delta_\nu)$ over all measures $\nu \in \mathcal{M}_{\ap}(X,\cR)$. The invariant $\Sd(\mathcal R)$ is a countable subgroup of $\R_0^+$ (see Proposition \ref{proposition: group}). Our next main result provides a structure theorem for type ${\rm III}$ strongly ergodic almost periodic equivalence relations  which is analogous to Connes's structure theorem for almost periodic full factors of type ${\rm III}$ \cite{Co74}.

\begin{theoremstar}\label{thmstar:almost-periodic}
Let $\mathcal R$ be a type ${\rm III}$ strongly ergodic almost periodic equivalence relation on a standard measure space $X$. Put $\Gamma=\mathrm{Sd}(\cR)$. Then the following assertions hold true:
\begin{itemize}
\item [$(\rm i)$] If $\cR$ is of type $\III_\lambda$ for $\lambda \in (0,1)$, then $\Gamma=\{\lambda^{n} \mid n \in \Z \}$. If $\cR$ is of type $\III_1$, then $\Gamma$ is a dense countable subgroup of $\R^{+}_0$.
\item [$(\rm ii)$] For any sequence $(t_n)_{n \in \N}$ in $\R$, we have $t_n \to 0$ with respect to $\tau(\mathcal R)$ if and only if $\gamma^{{\bf i}t_n} \to 1$ for every $\gamma \in \Gamma$.
\item [$(\rm iii)$] There exists a measure $\nu \in \mathcal{M}_{\ap}(X,\cR)$ such that $\Range(\delta_\nu) = \Gamma$.
\item [$(\rm iv)$] For any measure $\nu$ as in $(\rm iii)$, the measure preserving subequivalence relation $\mathcal R_\nu \subset \mathcal R$ defined by $\mathcal R_\nu := \ker(\delta_\nu)$ is strongly ergodic.
\item [$(\rm v)$] For any infinite measures $\nu_1, \nu_2$ as in $(\rm iii)$, there exist $\alpha > 0$ and $\theta \in [\mathcal R]$ such that $\theta_\ast \nu_1=\alpha \nu_2 $.

\end{itemize}
\end{theoremstar}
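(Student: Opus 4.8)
The plan is to follow the blueprint of Connes' structure theorem for almost periodic full factors \cite{Co74}, replacing modular theory by the Radon--Nikodym cocycle and using the skew-product criterion (Theorem \ref{thmstar:skew-product}) together with the spectral gap criterion (Theorem \ref{thmstar:spectral-gap}). The central object is a \emph{single} almost periodic measure realizing $\Gamma = \Sd(\mathcal R)$, so I would establish (iii) first and deduce the remaining items from it. To produce such a measure, recall that choosing $\nu = h\mu$ replaces $\log\delta_\mu$ by the cohomologous cocycle $\log\delta_\mu - \partial(\log h)$; almost periodicity means $\log h$ can be chosen so that this cocycle takes values in a countable subgroup $\log\Gamma_\nu$. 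Since $\Sd(\mathcal R)$ is a countable subgroup (Proposition \ref{proposition: group}) and $\rH^1(\mathcal R,\mathbf T)$ is second countable, I would first extract a sequence $\nu_k$ of almost periodic measures with $\bigcap_k \Range(\delta_{\nu_k}) = \Gamma$, after checking that the family of ranges is stable under the relevant combination so that the $\Gamma_{\nu_k}$ can be taken decreasing. I would then build $\nu$ as a limit of the $\nu_k$, normalizing the densities $h_k$ and passing to the limit cocycle valued in $\bigcap_k\log\Gamma_{\nu_k} = \log\Gamma$. The main obstacle is exactly this limiting step: controlling the densities so that the limit measure is still $\sigma$-finite, equivalent to $\mu$, and has range \emph{equal} to $\Gamma$ rather than merely contained in it; this is the analogue of Connes' construction of an almost periodic weight with point spectrum exactly $\Sd(M)$ and is the technical heart of the theorem.

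Granting (iii), I would treat (i) by a ratio-set argument, using that a countable subgroup of $\R^+_0\cong(\R,+)$ is either cyclic or dense. If $\mathcal R$ is of type $\III_\lambda$ with $\lambda\in(0,1)$, then the Krieger ratio set forces $\overline{\Range(\delta_\nu)}\supseteq\lambda^\Z$ for every almost periodic $\nu$, while the $\lambda$-adic measure gives $\Gamma\subseteq\lambda^\Z$; applying this to the measure from (iii) and using that $\lambda^\Z$ is closed and discrete pins $\Gamma=\lambda^\Z$. If $\mathcal R$ is of type $\III_1$, then $\Gamma\ne\{1\}$ (otherwise $\mathcal R$ would preserve a measure) and $\Gamma$ cannot be cyclic (otherwise the previous paragraph would make $\mathcal R$ of type $\III_\lambda$), so by the dichotomy $\Gamma$ is dense. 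For (v), writing $\nu_1=h\nu_2$ and using that both $\delta_{\nu_1},\delta_{\nu_2}$ are $\Gamma$-valued, the cocycle $\partial(\log h)=\log\delta_{\nu_2}-\log\delta_{\nu_1}$ takes values in $\log\Gamma$; hence $\log h \bmod \log\Gamma$ is $\mathcal R$-invariant and, by ergodicity, constant. This gives $h=\alpha\,\gamma(\cdot)$ for some $\alpha>0$ and a measurable $\gamma:X\to\Gamma$, and it remains to realize $\gamma$ as the $\delta_{\nu_2}$-index of an element $\theta\in[\mathcal R]$, which is a standard full-group selection argument using ergodicity of $\mathcal R_{\nu_2}$.

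For (iv), the point is that $\mathcal R$ is a normal extension of the measure preserving relation $\mathcal R_\nu=\ker(\delta_\nu)$ with abelian quotient $\mathcal R/\mathcal R_\nu\cong\Gamma$, whose dual $\widehat\Gamma$ is compact. I would argue by contraposition: given a nontrivial asymptotically $[\mathcal R_\nu]$-invariant sequence of sets, I would analyze its behaviour under elements $\phi_\gamma\in[\mathcal R]$ with $\delta_\nu(\phi_\gamma x,x)=\gamma$. Because the quotient is abelian, the sequence asymptotically transforms under the $\phi_\gamma$ by characters $\chi_n\in\widehat\Gamma$, and compactness of $\widehat\Gamma$ lets me pass to a limit character $\chi$. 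If $\chi$ is trivial the sequence is asymptotically $[\mathcal R]$-invariant, while if $\chi$ is nontrivial it produces a nontrivial almost invariant eigenfunction for $\mathcal R$. In either case this contradicts the strong ergodicity of $\mathcal R$ in its functional form, so $\mathcal R_\nu$ is strongly ergodic; here the compactness of $\widehat\Gamma$ is what makes the argument work.

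Finally I would deduce (ii). Fixing the measure $\nu$ from (iii), the cocycles $\delta_\mu^{{\bf i}t}$ and $\delta_\nu^{{\bf i}t}$ are cohomologous, so $\tau(\mathcal R)$ is the weakest topology making $t\mapsto[\delta_\nu^{{\bf i}t}]$ continuous. If $\gamma^{{\bf i}t_n}\to1$ for every $\gamma\in\Gamma$, then $\delta_\nu^{{\bf i}t_n}\to1$ pointwise on $\mathcal R$, hence in measure by dominated convergence, so $[\delta_\nu^{{\bf i}t_n}]\to[1]$ and $t_n\to0$ for $\tau(\mathcal R)$. Conversely, since $\mathcal R$ is strongly ergodic, $\rB^1(\mathcal R,\mathbf T)$ is closed and $\rH^1(\mathcal R,\mathbf T)$ is Polish (\cite[Proposition 2.3]{Sc79}), so $t_n\to0$ for $\tau(\mathcal R)$ lifts to coboundaries $\partial\phi_n$ with $\delta_\nu^{{\bf i}t_n}\,\partial\phi_n\to1$ in measure. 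Restricting to $\mathcal R_\nu=\ker(\delta_\nu)$, where $\delta_\nu^{{\bf i}t_n}\equiv1$, shows that $\phi_n$ is asymptotically $\mathcal R_\nu$-invariant; by the strong ergodicity of $\mathcal R_\nu$ from (iv) the $\phi_n$ are asymptotically constant, whence $\delta_\nu^{{\bf i}t_n}\to1$ in measure on all of $\mathcal R$ and therefore $\gamma^{{\bf i}t_n}\to1$ for every $\gamma\in\Gamma$. This closes the loop, and I expect (iii) to remain the decisive difficulty throughout.
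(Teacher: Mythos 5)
Your overall architecture (prove (iii) first, derive everything else from the distinguished measure) is the right one, and your treatments of (ii), of the first half of (v), and of (i)-given-(iii) are essentially sound. But the proof has a genuine gap exactly where you flag it: item (iii) is not proved, only described as "the technical heart", and the limiting construction you sketch (intersecting the ranges of a sequence $\nu_k$ and passing to a limit measure) is not the mechanism that actually works. The paper's route is quite different and does not involve any limit of measures. It first establishes a cohomology lemma (Lemma \ref{lemma: cohomology}): for two $\Lambda$-almost periodic measures $\nu_1,\nu_2$ one has $\delta_{\nu_2}\delta_{\nu_1}^{-1}=\partial h$ with $h$ valued in $\Lambda$ itself, proved by extending $t\mapsto\delta_\nu^{\mathbf{i}t}$ to the compact group $\widehat\Lambda$ and using the vanishing of measurable $2$-cohomology $\rZ^2_{\rm m}(G,\T)=\rB^2_{\rm m}(G,\T)$ to untwist a projective lift. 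This yields Theorem \ref{theorem: Sd invariant}: $\Sd(\cR)=\bigcap_U\Range(\delta_\mu|_{\cR_U})$ for a \emph{single} almost periodic $\mu$. The second key point, entirely absent from your proposal, is that strong ergodicity forces the skew product $\cR\times_{\delta_\eta}\Lambda$ to have a \emph{completely atomic} ergodic decomposition: if it were diffuse, the ergodic action of the amenable group $\Lambda$ on the algebra of invariant functions could not be strongly ergodic, producing a nontrivial almost invariant sequence for an amplification of $\cR$ (Proposition \ref{amenable}). One then restricts to a corner $U$ on which $\ker(\delta_\eta)$ is ergodic and pushes the measure forward by an element of $[[\cR]]$ with $\dom=U$, $\ran=X$; Lemma \ref{lemma: equivalence} (ergodicity of $\cR_\nu$ is equivalent to $\Gamma$-almost periodicity of $\nu$) then gives $\Range(\delta_\nu)=\Gamma$. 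Without something playing the role of these two ingredients, your (iii) does not close, and since (i), (ii), (iv), (v) all rest on it, the proof as a whole is incomplete.

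Secondarily, your argument for (iv) takes a genuinely different route from the paper and is itself only a sketch. The paper forms $\rd(\cR)=\cR\times_{\delta_\nu}\Gamma$, checks it is ergodic because $\Range(\delta_\nu)=\Gamma$ makes the $\Gamma$-orbit of the ergodic fibre $X\times\{1\}$ exhaust $X\times\Gamma$, and then applies Proposition \ref{amenable} again to transfer strong ergodicity from the amplification of $\cR$ down to $\rd(\cR)$ and hence to $\cR_\nu\cong\rd(\cR)_{X\times\{1\}}$. Your contrapositive argument via "asymptotic characters" of $\widehat\Gamma$ could plausibly be formalized (it is in the spirit of the compact case of Theorem \ref{thmstar:skew-product}), but as written the step "the sequence asymptotically transforms under the $\phi_\gamma$ by characters $\chi_n$" is unjustified: for a sequence of sets this does not literally make sense, and one must pass to an ultraproduct or to $\rL^2$-eigenfunction language, verify that the induced action of $\Gamma$ on the asymptotic invariants is well defined independently of the coset representatives $\phi_\gamma$, and only then invoke compactness of $\widehat\Gamma$. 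Similarly, in (v) the final "standard full-group selection" needs the element $\theta$ to be chosen inside the kernel relation of the amplified measure $\nu_1\otimes\delta_0+\alpha\nu_2\otimes\delta_1$ on $X\times\{0,1\}$ (so that it actually transports $\nu_1$ to $\alpha\nu_2$, not merely matches the sets); this is where the infiniteness of $\nu_1,\nu_2$ and the ergodicity of the amplified kernel relation are used, and it deserves to be said.
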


In Section \ref{section:computations}, we use {\em generalized Bernoulli} equivalence relations to construct families of strongly ergodic equivalence relations with prescribed Sd and $\tau$ invariants. For the class of plain Bernoulli equivalence relations arising from nonamenable groups, the corresponding factor is full and the Sd and $\tau$ invariants of the equivalence relation and of the factor coincide. However, we also construct a family of generalized Bernoulli equivalence relations with prescribed Sd and $\tau$ invariants and for which the corresponding factor is not full. The following open question is interesting in this respect.

\begin{problem}
Construct an example of an ergodic equivalence relation $\mathcal R$ on a standard measure space $X$ such that the associated factor $\rL(\mathcal R)$ is almost periodic but $\cR$ is not almost periodic. What if we require $\rL(\mathcal R)$ to be full?
\end{problem}

As we already pointed out, the Sd and $\tau$ invariants of a strongly ergodic equivalence relation $\mathcal R$ may differ from the Sd and $\tau$ invariants of the corresponding factor $\rL(\mathcal R)$. Using \cite[Theorems A]{HI15}, we obtain the following rigidity result for arbitrary strongly ergodic free actions $\Gamma \curvearrowright (X, \mu)$ of bi-exact countable discrete groups (e.g. hyperbolic groups) on standard measure spaces: the Sd and $\tau$ invariants of the orbit equivalence relation $\mathcal R(\Gamma \curvearrowright X)$ and of the factor $\rL(\mathcal R(\Gamma \curvearrowright X))$ coincide (note that $\rL(\mathcal R(\Gamma \curvearrowright X))$ is full by \cite[Theorem C]{HI15}). We refer to \cite[Definition 15.1.2]{BO08} and section \ref{section:computations} for the definition of bi-exact groups.

\begin{theoremstar}\label{thmstar:bi-exact}
Let $\Gamma$ be any bi-exact countable discrete group and $\Gamma \curvearrowright (X, \mu)$ any strongly ergodic free action on a standard measure space. Then $\rL(\mathcal R(\Gamma \curvearrowright X))$ is a full factor and $$\tau \left(\mathcal R(\Gamma \curvearrowright X) \right) = \tau \left(\rL(\mathcal R(\Gamma \curvearrowright X)) \right).$$
If moreover $\mathcal R(\Gamma \curvearrowright X)$ is almost periodic, then $\rL(\mathcal R(\Gamma \curvearrowright X))$ is almost periodic and 
$$\Sd \left(\mathcal R(\Gamma \curvearrowright X) \right) = \Sd \left(\rL(\mathcal R(\Gamma \curvearrowright X)) \right).$$
\end{theoremstar}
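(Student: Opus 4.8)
The plan is to realize both invariants as pullbacks, along the single one‑parameter family $t \mapsto \delta_\mu^{{\bf i}t}$, of topologies on two Polish groups linked by a canonical continuous homomorphism, and to use bi-exactness only to upgrade this homomorphism to a topological embedding along that family. Write $M = \rL(\mathcal R(\Gamma\actson X)) = \rL^\infty(X)\rtimes\Gamma$ and $A = \rL^\infty(X)$, a Cartan subalgebra of $M$. Fullness of $M$ is exactly \cite[Theorem C]{HI15}, so $\Out(M)$ is a Polish group and $\tau(M)$ is defined. Via the identification $\rZ^1(\mathcal R,\T)\cong\Aut(\rL(\mathcal R)/A)$ of Lemma \ref{lemma: link}, a coboundary $\partial\phi$ (with $\phi\in\cU(A)$) maps to the inner automorphism $\Ad(\phi)$, so there is a well-defined continuous group homomorphism $\Phi : \rH^1(\mathcal R,\T)\to\Out(M)$, and for the state $\varphi=\varphi_\mu$ associated with $\mu$ one has $\Phi([\delta_\mu^{{\bf i}t}]) = [\sigma_t^\varphi]$, since the modular flow of $\varphi_\mu$ fixes $A$ pointwise and multiplies $u_\gamma$ by the Radon--Nikodym cocycle. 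Continuity of $\Phi$ gives $\tau(M)\le\tau(\mathcal R)$ for free, and the entire $\tau$-statement reduces to the reverse implication: if $[\sigma_{t_n}^\varphi]\to[\id]$ in $\Out(M)$, then $[\delta_\mu^{{\bf i}t_n}]\to 0$ in $\rH^1(\mathcal R,\T)$.

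For this reverse step I would argue as follows. By definition of the topology on $\Out(M)$ there are unitaries $u_n\in\cU(M)$ with $\Ad(u_n)\circ\sigma_{t_n}^\varphi\to\id$ in $\Aut(M)$. Since $\sigma_{t_n}^\varphi$ fixes $A$ pointwise, applying this to $a\in A$ shows $u_n a u_n^\ast\to a$, so that $u_n$ asymptotically commutes with $A$ and $(u_n)$ defines a unitary in $A'\cap M^\omega$. This is where bi-exactness enters: the relative spectral gap provided by \cite[Theorem A]{HI15} for strongly ergodic free actions of bi-exact groups yields $A'\cap M^\omega = A^\omega$, so that $u_n$ may be replaced, up to an asymptotically negligible error, by unitaries $\phi_n\in\cU(A)$. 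Then $\Ad(\phi_n)\circ\sigma_{t_n}^\varphi\to\id$, which under Lemma \ref{lemma: link} translates into $\partial\phi_n\cdot\delta_\mu^{{\bf i}t_n}\to 1$ in $\rZ^1(\mathcal R,\T)$; passing to $\rH^1$ gives $[\delta_\mu^{{\bf i}t_n}]=[\partial\phi_n\cdot\delta_\mu^{{\bf i}t_n}]\to 0$, as wanted. I expect this upgrade of an asymptotically $A$-central sequence to one lying in $A^\omega$ --- correctly formulated and applied in the type ${\rm III}$ ultraproduct, with due care for the weight-dependence of the relevant norms --- to be the main obstacle; everything else is formal.

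Finally, for the almost periodic part, choose by Theorem \ref{thmstar:almost-periodic}$(\rm iii)$ a measure $\nu\in\cM_{\ap}(X,\cR)$ with $\Range(\delta_\nu)=\Gamma:=\Sd(\mathcal R)$. The associated weight $\varphi_\nu$ on $M$ has modular flow corresponding to $\delta_\nu^{{\bf i}t}$, whose modular operator is diagonalizable with point spectrum contained in the countable group $\Gamma$; hence $\varphi_\nu$ is almost periodic and $M$ is an almost periodic full factor, so $\Sd(M)$ is defined. By Connes' analysis of almost periodic full factors \cite{Co74}, $\tau(M)$ is the weakest topology on $\R$ making $t\mapsto\gamma^{{\bf i}t}$ continuous for all $\gamma\in\Sd(M)$, exactly as Theorem \ref{thmstar:almost-periodic}$(\rm ii)$ describes $\tau(\mathcal R)$ through $\Sd(\mathcal R)$. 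Since both $\Sd(M)$ and $\Sd(\mathcal R)$ are recovered from the common topology $\tau(M)=\tau(\mathcal R)$ established above as the set of $\gamma\in\R^+_0$ for which $t\mapsto\gamma^{{\bf i}t}$ is continuous, we conclude that $\Sd(\mathcal R(\Gamma\actson X))=\Sd(\rL(\mathcal R(\Gamma\actson X)))$.
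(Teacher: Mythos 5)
Your overall architecture (the continuous homomorphism $\rH^1(\mathcal R,\T)\to\Out(M)$ giving $\tau(M)\subset\tau(\mathcal R)$ for free, and the reduction of everything to replacing the unitaries $u_n$ by unitaries in $A=\rL^\infty(X)$) matches the paper, but the mechanism you propose for that replacement has a genuine gap. You want to deduce from \cite[Theorem A]{HI15} that $A'\cap M^\omega=A^\omega$ and apply this to the sequence $(u_n)$, which only \emph{asymptotically commutes with the Cartan subalgebra} $A$. Theorem A of \cite{HI15} is a statement about sequences that are centralizing for the whole crossed product $B\rtimes\Gamma$ (i.e.\ $\Ad(x_n)\to\id_M$, equivalently $\|x_n\psi-\psi x_n\|\to 0$ for all $\psi\in M_\ast$); it concludes that such sequences are asymptotically contained in $B$. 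Your $(u_n)$ is typically \emph{not} central in $M$: by construction $\Ad(u_n)$ is asymptotic to $\sigma^\varphi_{-t_n}$, which is far from the identity unless $t_n\to 0$ in the usual topology --- which is exactly what you are trying to prove. And the stronger statement $A'\cap M^\omega=A^\omega$ is false for general Cartan inclusions (already for the diagonal Cartan in the hyperfinite ${\rm II_1}$ factor, the sequence of off-diagonal symmetries in the $n$-th tensor factor lies in $A'\cap M^\omega$ but has zero expectation onto $A$), so nothing of this shape can be extracted from bi-exactness alone without genuine centrality. There is also a secondary issue you wave at but do not resolve: in the type ${\rm III}$ setting a bounded sequence defines an element of the Ocneanu ultraproduct $M^\omega$ only if it lies in the multiplier algebra of the $\omega$-trivial ideal, which is not automatic for $(u_n)$.

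The missing idea, which is how the paper closes this gap, is to pass to the continuous core: $\core(M)=M\rtimes_{\sigma^\varphi}\R=\rL^\infty(X\times\R)\rtimes\Gamma$, where $\Gamma\actson X\times\R$ is the Maharam extension. By \cite[Lemma XII.6.14]{Ta03a}, the sequence $u_n\lambda_\varphi(t_n)$ \emph{is} a genuine centralizing sequence of $\core(M)$, which is again a crossed product by the bi-exact group $\Gamma$ and has no amenable direct summand since $M$ is a nonamenable factor. Theorem A of \cite{HI15} then legitimately gives $u_n\lambda_\varphi(t_n)-\rE_{\rL^\infty(X\times\R)}(u_n\lambda_\varphi(t_n))\to 0$ $\ast$-strongly, and the splitting $\rL^\infty(X\times\R)=A\ovt\rL(\R)$ together with $\lambda_\varphi(t_n)\in\cU(\rL(\R))$ yields $u_n-\rE_A(u_n)\to 0$, after which your argument resumes. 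Concerning the $\Sd$ part: your route (recovering $\Sd$ from $\tau$ via Pontryagin duality, using Theorem \ref{thmstar:almost-periodic}$(\rm ii)$ on one side and Connes' description of $\tau$ for almost periodic full factors on the other) is different from the paper's, which instead identifies $\Sd(M)$ with the point spectrum of $\Delta_\psi$ for $\psi=\tau_\nu\circ\rE_A$ using \cite[Lemma 4.8]{Co74} and the factoriality of $M_\psi=\rL(\mathcal R_\nu)$; your version is workable in principle but depends both on the $\tau$-equality above and on citing the precise statement in \cite{Co74} that $\tau(M)$ is the topology induced by the characters $t\mapsto\gamma^{{\bf i}t}$, $\gamma\in\Sd(M)$, so it inherits the gap.
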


In \cite[Section 6]{HI15}, for every $\lambda \in (0, 1]$, a concrete example of a strongly ergodic free action $\F_\infty \curvearrowright (X_\lambda, \mu_\lambda)$ on a standard measure space was constructed using the main result of \cite{BISG15}. The following open question is interesting in this respect.

\begin{problem}\label{problem3}
Construct examples of strongly ergodic free actions of free groups $\F_n \curvearrowright (X, \mu)$  on a standard measure space for which the induced orbit equivalence relation $\mathcal R(\F_n \curvearrowright X)$ (and hence the factor $\rL(\mathcal R(\F_n \curvearrowright X))$ by Theorem \ref{thmstar:bi-exact}) has prescribed Sd and $\tau$ invariants.
\end{problem}

\subsection*{Added in the proof} Since this paper was posted on the arXiv in April 2017, Problem \ref{problem3} above has been solved in \cite{VW17}. Indeed, it is shown in \cite[Section 7]{VW17} that for a large class of nonsingular Bernoulli actions of the free groups $\F_n$ with $n \geq 3$, the induced orbit equivalence relation is strongly ergodic and has prescribed Sd and $\tau$ invariants.

\subsection*{Acknowledgments} It is our pleasure to thank Damien Gaboriau for his valuable comments.

\tableofcontents

\section{Preliminaries}

\subsection{Notations}
Let $(X,\mathcal{B},\mu)$ be a standard $\sigma$-finite measure space. We will often omit the $\sigma$-algebra $\mathcal{B}$ as well as the measure $\mu$ when considering objects that only depend on the measure class of $\mu$. In particular, for every Polish space $E$, we denote by $\rL^{0}(X,E)$ the set of all equivalence classes of measurable functions from $X$ to $E$, for the equivalence relation of equality almost everywhere. Then $\rL^{0}(X,E)$ is a Polish space for the topology of convergence in measure (which does not depend on the choice of the measure within the same measure class). When $E=\C$, we will use the shorthand notation $\rL^0(X)=\rL^0(X,\C)$. 

If $f \in \rL^{0}(X,E)$, then the measure class of the push-forward measure $f_*\mu$ is well-defined and only depends on the measure class of $\mu$. The topological support of $f_*\mu$ (i.e.\ the smallest closed subset of $E$ on which it is supported) is called the \emph{essential range} of $f$ and is denoted $\overline{\mathrm{Range}}(f)$. If $f_*\mu$ is atomic, we say that $f$ is a step function and in this case, we denote by $\mathrm{Range}(f)$ the set of atoms of $f_*\mu$. Note that in this case, $\mathrm{Range}(f)$ is a countable subset and its closure is precisely $\overline{\mathrm{Range}}(f)$, by definition. 

Note that $\rL^0(X,\{0,1\})$ can be identified with the set $\mathfrak{P}(X)$ of all equivalence classes of measurable subsets of $X$. We will often abuse notations and identify a measurable subset $A \subset X$ with its equivalence class in $\mathfrak{P}(X)$. We will need the crucial fact that the boolean algebra $\mathfrak{P}(X)$ is complete: every increasing net has a supremum.

We will denote by $\mathcal{M}(X)$ the set of all $\sigma$-finite measures in the measure class of $X$ (note that this is smaller than the usual set of all Borel measures on $X$ which is also often denoted by $\mathcal{M}(X)$ in the literature). If $\mu,\nu \in \mathcal{M}(X)$, then their Radon-Nikodym derivative is a well-defined element $\frac{\rd \mu}{\rd \nu} \in \rL^{0}(X,\R_0^{+})$.

\subsection{Equivalence relations, full groups and strong ergodicity} Let $X$ be a standard measure space. Let $\cR$ be an equivalence relation on $X$. In this article, we will always assume that $\cR$ is measurable as a subset of $X \times X$, that it has countable classes and that it is \emph{nonsingular}, i.e.\ the $\cR$-saturation of every null-set of $X$ is again a null-set. As a measurable space, $\cR$ is equipped with a canonical measure class for which a subset $A \subset \cR$ is a null-set if and only if one of its coordinate projections on $X$ is a null-set. We refer the reader to \cite{FM75} for general background on nonsingular equivalence relations.

We define the \emph{full pseudo-group} of $\cR$, denoted by $[[\cR]]$, as the set of all partial isomorphisms $\theta : \dom(\theta) \rightarrow \ran (\theta)$ where $\dom (\theta),\ran (\theta) \in \mathfrak{P}(X)$ such that $(x, \theta(x)) \in \cR$ for a.e.\ $x \in \dom(\theta)$. If $\theta, \theta' \in [[\cR]]$, then we can naturally compose them to obtain a new element $\theta' \circ \theta \in [[ \cR]]$ with $\dom(\theta' \circ \theta)=\theta^{-1}(\dom(\theta') \cap \ran(\theta))$ and $\ran(\theta' \circ \theta)=\theta'(\ran(\theta) \cap \dom(\theta'))$. The set of invertible elements of $[[\cR]]$ is called the \emph{full group} of $\cR$ and is denoted by $[\cR]$. 

For every element $\theta \in [[\cR]]$, we let $\mathrm{graph}(\theta)=\{ (x, \theta(x)) \mid x \in \dom(\theta) \} \in \mathfrak{P}(\cR)$ be the graph of $\theta$. Then we have
\[ \cR = \bigcup_{ \theta \in [\cR]} \mathrm{graph}(\theta) \]
and any subset $A \in \mathfrak{P}(\cR)$ can be written as a disjoint union
\[ A=\bigsqcup_{n \in \N} \mathrm{graph}(\theta_n) \]
for some elements $\theta_n \in [[ \cR]]$. We have that $[\cR]$ is a Polish group for the topology  induced by the following metric
\[ \rd_\mu (\theta,\theta'):=\mu \left( \{ x \in X \mid \theta(x) \neq \theta'(x) \} \right) \]
where $\mu \in \mathcal{M}(X)$ is any probability measure (the induced topology does not depend on $\mu$).

Let $E$ be a Polish space. For every function $f \in \rL^0(X,E)$, we define two functions $f^\ell$ and $f^r$ in $\rL^0(\cR,E)$ by
\begin{align*}
 f^\ell(x,y) &=f(x),\\
 f^r(x,y) &=f(y) \quad \text{for a.e. } (x,y) \in \cR.
\end{align*}
We say that $f$ is invariant by $\cR$ if $f^\ell=f^r$, i.e.\ we have $f(x)=f(y)$ for a.e.\ $(x,y) \in \cR$. This is equivalent to the property that $\theta(f)=f$ for every $\theta \in [\cR]$, where $\theta(f):=f \circ \theta^{-1}$. We denote by $\rL^{0}(X,E)^{\cR}$ the subset of all $\cR$-invariant functions. We say that $\cR$ is \emph{ergodic} when $\rL^\infty(X)^\cR=\C$. We say that a sequence $f_n \in \rL^\infty(X), \; n \in \N$ is \emph{almost $\cR$-invariant} if $f_n^\ell-f_n^r \to 0$ in the measure topology. This property is equivalent to $\theta(f_n)-f_n \to 0$ for all $\theta \in [\cR]$.  An almost $\cR$-invariant sequence $(f_n)_{n \in \N}$ is called \emph{trivial} if there exists a sequence $\lambda_n \in \C$ such that $f_n-\lambda_n \to 0$ in the measure topology. We say that $\cR$ is \emph{strongly ergodic} if every almost $\cR$-invariant sequence in $\rL^\infty(X)$ is trivial.

\subsection{First cohomology of equivalence relations}
Let $\cR$ be an equivalence relation on a standard  measure space $X$ and $G$ a locally compact second countable abelian group. Put $\cR^{(2)} = \left\{(x,y,z)\in X^3 \mid (x,y)\in\cR \text{ and } (y,z)\in\cR \right\}$. As a measurable subset of $X^3$, $\cR^{(2)}$ is again equipped with a canonical measure class for which a subset $A \subset \cR^{(2)}$ is a null-set if and only if one (hence any) of its coordinate projections is a null-set.

A measurable $G$-valued $1$-{\em cocycle} for $\cR$ is a function $\Omega \in \rL^{0}(\cR,G)$ satisfying $\Omega(x,y)\Omega(y,z) = \Omega(x,z)$ for a.e.\ $(x,y,z) \in \cR^{(2)}$. We denote by $\rZ^{1}(\cR,G)$ the set of all such cocycles. It is a closed subgroup of the Polish abelian group $\rL^{0}(\cR,G)$.

\begin{remark} \label{rem_cocycle}
Given any $1$-cocycle $\Omega \in \rZ^{1}(\cR,G)$, one may always choose a representing $1$-cocycle such that  $\Omega(x,y)\Omega(y,z) = \Omega(x, z)$ holds for {\em every}  $(x, y, z) \in \mathcal R^{(2)}$ (see e.g.\ \cite[Theorem B.9]{Zi84} and \cite[Theorem 1]{FM75}).
\end{remark}

We define a boundary map $\partial : \rL^{0}(X,G) \ni f \mapsto \partial f  \in \rZ^{1}(\cR,G)$ by $(\partial f)(x,y)=f(x)f(y)^{-1}$ for a.e.\ $(x,y) \in \cR$. Note that $\partial$ is a continuous group homomorphism. The image of $\partial$ is denoted by $\rB^{1}(\cR,G)$ and its elements are called $1$-\emph{coboundaries}. The \emph{$1$-cohomology of $\cR$ with coefficients in $G$} is the quotient group $\rH^1(\cR,G) = \rZ^1(\cR,G)/\rB^1(\cR,G)$ equipped with the quotient topology (which is not necessarily Hausdorff). If $\Omega \in \rZ^1(\cR,G)$ is a $1$-cocycle, we denote by $[\Omega] \in \rH^1(\cR,G)$ its cohomology class.

\begin{proposition}[{\cite[Proposition 2.3]{Sc79}}]
 Let $\cR$ be an equivalence relation on a standard measure space $X$. Then $\cR$ is strongly ergodic if and only if $\rB^1(\cR,\T)$ is closed in $\rZ^1(\cR,\T)$. In that case, $\rH^1(\cR,\T)$ is a Hausdorff Polish group.
\end{proposition}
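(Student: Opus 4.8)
The plan is to translate strong ergodicity into a purely topological statement about the boundary homomorphism $\partial\colon \rL^0(X,\T)\to \rZ^1(\cR,\T)$ and then to feed it into two standard facts about Polish groups. Throughout I assume $\cR$ is ergodic, which is the relevant case and guarantees that $\ker(\partial)=\rL^0(X,\T)^{\cR}=\T$ consists of the constants. First I would record the routine reformulation of strong ergodicity in terms of unitaries: call a sequence $u_n\in\rL^0(X,\T)$ \emph{asymptotically invariant} when $\partial u_n\to 1$ in $\rZ^1(\cR,\T)$, and \emph{asymptotically constant} when $u_n\overline{c_n}\to 1$ in $\rL^0(X,\T)$ for some $c_n\in\T$; then $\cR$ is strongly ergodic if and only if every asymptotically invariant sequence of unitaries is asymptotically constant. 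The implication from the stated $\rL^\infty$-definition is immediate since unitaries are bounded (one normalizes the approximating constants to have modulus $1$); the converse is the standard level-set argument, using that for uniformly bounded almost invariant real $f_n$ one has $\int_a^b \mu(\{f_n>t\}\,\triangle\,\theta\{f_n>t\})\,\rd t=\int_X|f_n-\theta(f_n)|\,\rd\mu\to 0$, so that almost invariance and triviality pass between $f_n$ and enough of its level sets (here $\mu$ is a fixed probability measure in the class, and one runs over a countable dense subset of the Polish group $[\cR]$).

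With this in hand, let $\overline{\partial}\colon \rL^0(X,\T)/\T\to \rZ^1(\cR,\T)$ be the injective continuous homomorphism induced by $\partial$, whose image is exactly $\rB^1(\cR,\T)$. The unitary reformulation says precisely that $\cR$ is strongly ergodic if and only if $\overline{\partial}^{-1}$ is continuous at the identity, and since $\overline{\partial}^{-1}$ is a homomorphism of topological groups, continuity at the identity upgrades by translation to continuity everywhere. Hence the conceptual core is: \emph{$\cR$ is strongly ergodic if and only if $\overline{\partial}$ is a homeomorphism onto its image $\rB^1(\cR,\T)$}. The rest of the work is to match this with closedness of $\rB^1(\cR,\T)$.

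For the direction ``$\rB^1$ closed $\Rightarrow$ strongly ergodic'' I would argue that, since $\rZ^1(\cR,\T)$ is a closed subgroup of the Polish group $\rL^0(\cR,\T)$ and hence Polish, closedness of $\rB^1(\cR,\T)$ makes it Polish as well. As $\rL^0(X,\T)/\T$ is Polish (quotient of a Polish group by a closed subgroup), the continuous bijective homomorphism $\overline{\partial}\colon \rL^0(X,\T)/\T\to \rB^1(\cR,\T)$ between Polish groups is open by the open mapping theorem, hence a topological isomorphism, and the previous paragraph gives strong ergodicity. For the converse ``strongly ergodic $\Rightarrow$ $\rB^1$ closed'', strong ergodicity makes $\overline{\partial}$ a homeomorphism onto $\rB^1(\cR,\T)$, so the latter is homeomorphic to the Polish group $\rL^0(X,\T)/\T$ and is therefore a Polish subspace of $\rZ^1(\cR,\T)$, equivalently a $G_\delta$ subset. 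The decisive point is then that \emph{a $G_\delta$ subgroup $H$ of a Polish group is automatically closed}: working inside the Polish group $\overline{H}$, for any $g\in\overline{H}\setminus H$ the cosets $H$ and $gH$ would be disjoint dense $G_\delta$ subsets, contradicting the Baire category theorem, so $\overline{H}=H$. Applying this to $H=\rB^1(\cR,\T)$ yields closedness.

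Finally, once $\rB^1(\cR,\T)$ is closed, the quotient $\rH^1(\cR,\T)=\rZ^1(\cR,\T)/\rB^1(\cR,\T)$ is the quotient of a Polish group by a closed (normal, everything being abelian) subgroup, hence Hausdorff and Polish. I expect the main obstacle to be the careful bookkeeping in the first paragraph, namely the equivalence of the $\rL^\infty$, subset, and unitary formulations of strong ergodicity and the density reduction over $[\cR]$, together with pinning down the two Polish-group inputs; the genuinely decisive ingredient is the Baire-category fact that a $G_\delta$ subgroup is closed, which is exactly what converts the soft ``homeomorphism onto its image'' statement into honest closedness of $\rB^1(\cR,\T)$.
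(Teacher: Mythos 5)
The paper does not actually prove this proposition — it is quoted from \cite{Sc79} — so there is no in-text argument to compare against; your proof is correct and is essentially Schmidt's original one. The three ingredients you isolate are exactly the right ones: the reformulation of strong ergodicity as continuity at the identity of $\overline{\partial}^{-1}$ on $\rB^1(\cR,\T)$ (upgraded to continuity everywhere by the homomorphism property), the open mapping theorem for Polish groups in the direction ``$\rB^1$ closed $\Rightarrow$ strongly ergodic'', and the Baire-category fact that a $G_\delta$ (equivalently, Polish) subgroup of a Polish group is closed for the converse; the final assertion about $\rH^1(\cR,\T)$ is then the standard statement that the quotient of a Polish group by a closed normal subgroup is Hausdorff Polish. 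One remark: your standing ergodicity assumption is not cosmetic but genuinely necessary — for the diagonal relation on a diffuse space one has $\rB^1(\cR,\T)=\{1\}$, which is closed, while every sequence is almost invariant and the relation is certainly not strongly ergodic — so the proposition implicitly carries Schmidt's standing hypothesis that $\cR$ is ergodic (it also guarantees $\ker(\partial)=\T$, which you need for $\overline{\partial}$ to be injective), and you were right to make it explicit.
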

 
 From now on, if $G = \T$, we will use the shorthand notations $\rZ^1(\cR) = \rZ^1(\cR,\T)$, $\rB^1(\cR) = \rB^1(\cR,\T)$ and $\rH^1(\cR) = \rH^1(\cR,\T)$. 

 \begin{definition}
Let $\cR$ and $\mathcal{S}$ be two equivalence relations on two standard measure spaces $X$ and $Y$. A \emph{morphism} from $\cR$ to $\mathcal{S}$ is a nonsingular measurable map $f : X \rightarrow Y$ such that $(f(x), f(y)) \in \mathcal S$ for a.e.\ $(x,y) \in \cR$. Two such morphisms $f$ and $g$ are said to be \emph{equivalent} if $(f(x), g(x)) \in \mathcal S$ for a.e.\ $x \in X$.
\end{definition}

It is easy to see that if $f : X \rightarrow Y$ is a morphism from $\cR$ to $\mathcal{S}$, then the natural map $f^{*} : \rZ^1(\mathcal{S}) \ni \Omega \mapsto \Omega \circ (f \times f) \in \rZ^1(\cR)$ is a continuous group homomorphism which sends $1$-coboundaries to $1$-coboundaries. Hence it induces a continuous group homomorphism $[f^*] : \rH^{1}(\mathcal{S}) \rightarrow \rH^1(\cR)$. The next proposition shows that this induced map $[f^{*}]$ only depends on the equivalence class of $f$.

\begin{proposition}
\label{endo_homology}
Let $\cR$ and $\mathcal{S}$ be two equivalence relations on two standard measure spaces $X$ and $Y$. And let $f,g :  X \rightarrow Y$ be two morphisms from $\cR$ to $\mathcal{S}$. If $f$ and $g$ are equivalent, then $[f^*]=[g^*]$.
\end{proposition}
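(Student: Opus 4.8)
The plan is to show, for an arbitrary $\Omega \in \rZ^1(\mathcal S)$, that the two pulled-back cocycles $f^*\Omega$ and $g^*\Omega$ differ by an explicit $1$-coboundary of $\cR$, so that $[f^*\Omega]=[g^*\Omega]$ in $\rH^1(\cR)$ and hence $[f^*]=[g^*]$. The natural candidate for the coboundary is $\Omega$ itself, evaluated along the $\mathcal S$-pair joining $f(x)$ to $g(x)$: I would set $h(x):=\Omega(f(x),g(x))$ and aim to prove $f^*\Omega\cdot(g^*\Omega)^{-1}=\partial h$. Throughout I would fix, by Remark \ref{rem_cocycle}, a representative of $\Omega$ for which the cocycle identity $\Omega(a,b)\Omega(b,c)=\Omega(a,c)$ holds for \emph{every} $(a,b,c)\in\mathcal S^{(2)}$, so that the algebra below is genuinely pointwise and no a.e.\ ambiguity arises from the cocycle relation.

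First I would verify that $h$ is a well-defined element of $\rL^0(X,\T)$. Since $f$ and $g$ are equivalent, $(f(x),g(x))\in\mathcal S$ for a.e.\ $x\in X$, so the map $\phi:X\to\mathcal S$, $\phi(x)=(f(x),g(x))$, is defined a.e.; then $h=\Omega\circ\phi$. The point to check is that $\phi$ is nonsingular, so that composing the a.e.-defined function $\Omega$ with $\phi$ produces a well-defined class in $\rL^0(X,\T)$. This is immediate from the description of the measure class on $\mathcal S$: if $A\subset\mathcal S$ is null then some coordinate projection $p(A)\subset Y$ is null, and $\phi^{-1}(A)$ is contained in $f^{-1}(p(A))$ (or $g^{-1}(p(A))$), which is null because $f$ (resp.\ $g$) is nonsingular.

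Next I would carry out the algebra. For a.e.\ $(x,y)\in\cR$ one has simultaneously $(f(x),g(x))\in\mathcal S$, $(g(x),g(y))\in\mathcal S$ (because $g$ is a morphism), and $(f(y),g(y))\in\mathcal S$; the last holds on a conull subset of $\cR$ because it holds for a.e.\ $y\in X$ and, by the defining property of the canonical measure class on $\cR$, the set of $(x,y)\in\cR$ whose second coordinate lies in a fixed null set is itself null. Applying the genuine cocycle identity twice along the chain $f(x)\to g(x)\to g(y)\to f(y)$ and using that $\T$ is abelian gives, for a.e.\ $(x,y)\in\cR$,
\[
\Omega(f(x),f(y)) = \Omega(f(x),g(x))\,\Omega(g(x),g(y))\,\Omega(f(y),g(y))^{-1},
\]
so that
\[
(f^*\Omega)(x,y)\,\big((g^*\Omega)(x,y)\big)^{-1} = \Omega(f(x),g(x))\,\Omega(f(y),g(y))^{-1} = h(x)\,h(y)^{-1} = (\partial h)(x,y).
\]
Hence $f^*\Omega=(\partial h)\cdot g^*\Omega$, so $[f^*\Omega]=[g^*\Omega]$, and since $\Omega$ was arbitrary, $[f^*]=[g^*]$.

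The algebraic core is entirely routine; the only genuine care is measure-theoretic bookkeeping. I expect the main (minor) obstacle to be justifying that the several ``a.e.'' hypotheses can be combined so that the pointwise cocycle identity applies simultaneously to the four points $f(x),g(x),g(y),f(y)$ for a.e.\ $(x,y)\in\cR$. This is exactly what is resolved by passing to a genuine cocycle representative via Remark \ref{rem_cocycle} and by invoking the rule that a subset of $\cR$ is null as soon as one of its coordinate projections is null, which promotes the fibrewise statement ``$(f(y),g(y))\in\mathcal S$ for a.e.\ $y$'' to a statement valid a.e.\ on $\cR$.
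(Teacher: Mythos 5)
Your proposal is correct and follows essentially the same route as the paper: both define the coboundary function $h(x)=\Omega(f(x),g(x))$ (the paper's $u$) and use the cocycle identity to show $f^*\Omega\cdot(g^*\Omega)^{-1}=\partial h$. The extra measure-theoretic bookkeeping you supply (nonsingularity of $x\mapsto(f(x),g(x))$ and the promotion of fibrewise a.e.\ statements to a.e.\ statements on $\cR$) is sound and simply makes explicit what the paper leaves implicit.
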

\begin{proof}
Let $c \in \rZ^{1}(\mathcal{S})$. Since $f \sim g$, we can define an element $u \in \rL^{0}(X,\T)$ by
\[ u(x)=c(f(x), g(x)) \quad \text{for a.e.\ } x \in X.  \]
Then, by using the cocycle identity, we compute
\[\partial u(x,y)= c(f(x),g(x)) \cdot \overline{c(f(y),g(y))}=c(f(x),f(y)) \cdot \overline{c(g(x),g(y))} \]
for a.e.\ $(x,y) \in \cR$ and this shows exactly that $[f^*(c)]=[g^*(c)]$.
\end{proof}

\begin{corollary}
\label{corner_homology}
Let $\cR$ be an ergodic equivalence relation on a standard measure space $X$. Let $Y \subset X$ be any nonzero measurable subset and let $\iota : Y \rightarrow X$ be the inclusion map. Then, the map $[\iota^{*}] : \rH^{1}(\cR) \rightarrow \rH^{1}(\cR_Y)$ is an isomorphism of topological groups.
\end{corollary}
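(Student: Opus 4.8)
The plan is to produce an explicit morphism in the reverse direction and then quote Proposition \ref{endo_homology} to conclude, avoiding any direct manipulation of cocycles. Concretely, I will build a measurable map $\phi : X \rightarrow Y$ which is a morphism from $\cR$ to $\cR_Y$ and which is a two-sided inverse of $\iota$ up to the equivalence of morphisms introduced above. Functoriality of the pullback, namely $(g \circ f)^* = f^* \circ g^*$ (immediate from $f^*(\Omega) = \Omega \circ (f \times f)$), together with Proposition \ref{endo_homology}, then forces $[\iota^*]$ and $[\phi^*]$ to be mutually inverse continuous group homomorphisms.

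First I would use ergodicity to observe that $Y$ is a complete section: the $\cR$-saturation of $Y$ is an $\cR$-invariant measurable set containing $Y$, hence of positive measure, hence conull. Using that $\cR$ has countable classes and $\cR = \bigcup_{\theta \in [\cR]} \graph(\theta)$, an exhaustion argument in the complete boolean algebra $\mathfrak{P}(X)$ produces a countable family $(\theta_n)_n$ in $[[\cR]]$ with pairwise disjoint domains covering $X$, with $\ran(\theta_n) \subseteq Y$, and with $\theta_0 = \id_Y$. Setting $\phi := \bigsqcup_n \theta_n$ yields a measurable, nonsingular map $\phi : X \rightarrow Y$ satisfying $(x, \phi(x)) \in \cR$ for a.e.\ $x$ and $\phi|_Y = \id_Y$. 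Since any $(x,y) \in \cR$ gives $(\phi(x), \phi(y)) \in \cR$ with both coordinates in $Y$, the map $\phi$ is a morphism from $\cR$ to $\cR_Y$.

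With $\phi$ in hand the remaining steps are formal: $\phi \circ \iota = \id_Y$ exactly, while $\iota \circ \phi : X \rightarrow X$ satisfies $(x, \iota(\phi(x))) \in \cR$ for a.e.\ $x$, so $\iota \circ \phi$ is equivalent to $\id_X$ as a morphism from $\cR$ to $\cR$. Functoriality gives $[(\iota \circ \phi)^*] = [\phi^*] \circ [\iota^*]$ and $[(\phi \circ \iota)^*] = [\iota^*] \circ [\phi^*]$; by Proposition \ref{endo_homology} the left-hand sides equal the identities on $\rH^1(\cR)$ and $\rH^1(\cR_Y)$ respectively. Hence $[\iota^*]$ and $[\phi^*]$ are mutually inverse. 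Both are continuous group homomorphisms by the discussion preceding Proposition \ref{endo_homology} (pullback by a fixed morphism is continuous on cocycles and descends continuously to cohomology), so $[\iota^*]$ is an isomorphism of topological groups.

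The main obstacle is the construction of $\phi$: turning the qualitative fact that $Y$ meets almost every orbit into an honest measurable, nonsingular retraction that restricts to the identity on $Y$. This is where ergodicity, the countability of the classes, and the completeness of $\mathfrak{P}(X)$ are all used, and where one must be careful that the partial isomorphisms $\theta_n$ assemble into a genuine measurable map $X \rightarrow Y$ with $\phi_* \mu \ll \mu|_Y$, so that $\phi$ indeed qualifies as a morphism of equivalence relations. Once $\phi$ is available, no cocycle computation beyond Proposition \ref{endo_homology} is required, and the topological half of the statement comes for free from the already-established continuity of the pullback maps $[\,\cdot\,^*]$.
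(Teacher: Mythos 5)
Your proposal is correct and follows essentially the same route as the paper: the authors also construct a retraction $r : X \to Y$ by patching together partial isomorphisms with pairwise disjoint domains partitioning $X \setminus Y$ and ranges in $Y$ (taking the identity on $Y$ itself), observe that $r \circ \iota = \id_Y$ and $\iota \circ r \sim \id_X$, and conclude via Proposition \ref{endo_homology}. Your additional remarks on the exhaustion argument in $\mathfrak{P}(X)$ and on nonsingularity of the assembled map only make explicit what the paper leaves implicit.
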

\begin{proof}
Since $\cR$ is ergodic, we can find a family $(\theta_i)_{i \in I}$ of elements of the full pseudogroup of $\cR$ such that $\ran (\theta_i) \subset Y$ for all $i \in I$ and the family $(\dom (\theta_i))_{i \in I}$ forms a partition of $X \setminus Y$. Define a map $r : X \rightarrow Y$ by $r(x)=x$ if $ x \in Y$ and $r(x)=\theta_i(x)$ if $x \in \dom( \theta_i)$. By construction, $r$ is a morphism from $\cR$ to $\cR_Y$. Moreover, we have $r \circ \iota=\id_Y$ and $\iota \circ r \sim \id_X$ (i.e.\ $r$ is a \emph{retraction}). Hence, at the cohomological level, $[r^*]$ is an inverse of $[\iota^*]$.
\end{proof}

\subsection{Skew-product equivalence relations}
Let $\cR$ be an equivalence relation on a standard measure space $X$, $G$ a locally compact second countable abelian group and $\Omega \in \rZ^1(\cR,G)$ a $1$-cocycle. On $X \times G$, put the measure class obtained as the product of the measure class of $X$ and the Haar measure class of $G$. The skew-product of $\cR$ by $\Omega$ is the equivalence relation $\cR \times_\Omega G$ on $X \times G$ defined by 
$$((x,g),(y,h)) \in \cR \times_\Omega G  \quad\text{if and only if}\quad (x,y) \in \cR \; \text{ and } \; \Omega(x,y) = gh^{-1}$$
for all $x,y\in X$ and all $g,h\in G$.

Let $\Omega' \in \rZ^{1}(\cR,G)$ be another $1$-cocycle in the same cohomology class of $\Omega$. Take $\alpha \in \rL^{0}(X,G)$ a function such that $\Omega'=(\partial \alpha) \Omega$. Define an element $T_\alpha \in \Aut(X \times G)$ by
\[ T_\alpha(x,g)=(x,\alpha(x)g) \quad \text{for a.e.\ } (x,g) \in X \times G. \]
Then $T_\alpha$ is an isomorphism from $\cR \times_\Omega G$ to $\cR \times_{\Omega'} G$.

\subsection{Maharam extension and type classification of equivalence relations}
Let $\cR$ be an equivalence relation on a standard measure space $X$. Let $\mu \in \mathcal{M}(X)$. On $\cR \subset X\times X$, define two measures $\mu_\ell, \mu_r \in \mathcal{M}(\cR)$ as follows:
\begin{align*}
\mu_\ell(W) &= \int_X \sharp \left\{y \in X \mid (x,y)\in W \right\} \rd\mu(x),\\
\mu_r(W) &= \int_X \sharp \left\{x \in X \mid (x,y)\in W \right\} \rd\mu(y) \quad\text{for $W \in \mathfrak{P}(\cR)$.}
\end{align*}

We say that $\mu$ is $\cR$-invariant if $\mu_\ell=\mu_r$. In general, one can define the \emph{modulus} of $\mu$ (with respect to $\cR$) by 
\[ \delta_\mu := \frac{\rd\mu_\ell}{\rd\mu_r} \in \rZ^1(\cR,\R^+_0).\]
If $\nu \in \mathcal{M}(X)$ is another measure, then we have
\[ \delta_\nu \delta_\mu^{-1}= \partial \left( \frac{\rd\nu}{\rd\mu} \right) \in \rB^1(\cR,\R^+_0). \]
Hence we have a canonical cohomology class
\[ \delta:=[\delta_\mu] \in \rH^1(\cR,\R^+_0) \]
and the skew-product equivalence relation $\core(\cR)=\cR \times_{\delta_\mu} \R^+_0$ does not depend on the choice of $\mu$ up to canonical isomorphism. We call $\core(\cR)$ the \emph{Maharam extension} of $\cR$.

Now, suppose that the equivalence relation $\cR$ is ergodic. We say that $\cR$ is of type
\begin{itemize}[leftmargin=1.5cm,labelwidth=1cm,align=parleft]
\item[$\I$] if $\cR$ has only one equivalence class, up to measure zero;
\item[$\II_1$] if $\cR$ is not of type $\I$ and if there exists a probability measure $\mu \in \mathcal{M}(X)$ that is $\cR$-invariant;
\item[$\II_\infty$] if $\cR$ is not of type $\I$ and if there exists an infinite measure $\mu \in \mathcal{M}(X)$ that is $\cR$-invariant;
\item[$\III$] otherwise.
\end{itemize}

By analogy with \cite{Co72} (see also \cite{Kr67, Kr75}), for any ergodic equivalence relation $\mathcal R$, we define the S invariant by the formula
\[ \rS(\cR)=\bigcap_{ \mu \in \mathcal{M}(X)} \overline{\text{Range}}(\delta_\mu), \]
where $\overline{\text{Range}}(\delta_\mu)$ denotes the essential range of $\delta_\mu$ in $\R^+$. The S invariant can be computed by using a single measure $\mu \in \mathcal{M}(X)$ thanks to the following formula:
\[ \rS(\cR)=\bigcap_{ U \in \mathfrak{P}(X), \; U \neq \emptyset} \overline{\text{Range}}(\delta_{\mu}|_{ \cR_U}), \]
where $\cR_U=\cR \cap ( U \times U)$ and $\delta_{\mu}|_{ \cR_U}$ is the restriction of $\delta_\mu$ to $\cR_U$. If moreover, the $\mu$-preserving subequivalence relation $\cR_\mu:=\ker(\delta_\mu) \subset \cR$ is ergodic, then we actually have $$\mathrm{S}(\cR)=\overline{\text{Range}}(\delta_\mu).$$

Note that $\mathcal R$ is of type ${\rm I}$ or ${\rm II}$ if and only if $\rS(\mathcal R) = \{ 1 \}$.
Assume now that $\cR$ is a type ${\rm III}$ ergodic equivalence relation. Consider the translation action of $\R^{+}_0$ on $X \times \R^{+}_0$ and note that this action preserves $\core(\cR)$. Hence it induces an action of $\R^+_0$ on $\rL^\infty(X \times \R^{+}_0)^{\core(\cR)}$. Then $\rS(\cR) \cap \R^+_0$ is the kernel of this action and thus $\rS(\mathcal R) \cap \R^+_0$ is a closed subgroup of $\R^{+}_0$ (see \cite[Theorem XIII.2.23]{Ta03b}). We say that $\cR$ is of type
\begin{itemize}[leftmargin=3cm,labelwidth=2.5cm,align=parleft]
\item[$\III_0$] if $\rS(\cR)=\{0, 1\}$;
\item[$\III_\lambda, \; \lambda \in (0,1)$] if $\rS(\cR)= \{0\} \cup \lambda^{\Z}$;
\item[$\III_1$] if $\rS(\cR)=\R^{+}$.
\end{itemize}

Finally, by analogy with \cite{Co74}, we introduce the $\tau$ invariant:

\begin{definition}
 Let $\cR$ be a strongly ergodic equivalence relation on a standard measure space $X$. The \emph{$\tau$ invariant} of $\cR$, denoted by $\tau(\cR)$, is defined as the weakest topology on $\R$ that makes the map $\R \to \rH^1(\cR) : t \mapsto [\delta_\mu^{\mathbf{i}t}]$ continuous.
\end{definition}

\subsection{Von Neumann algebras associated with equivalence relations}\label{subsection: von Neumann}

Let $\cR$ be an equivalence relation on a standard measure space $X$. The von Neumann algebra associted with $\cR$, denoted by $\rL(\cR)$, is generated by a copy of $\rL^\infty(X)$ and a set of unitaries $u_\theta, \; \theta \in [\cR]$ which satisfy the following two conditions:
\begin{itemize}
\item $u_\theta u_\phi=u_{ \theta \circ \phi}$ for all $\theta, \phi \in [\cR]$. 
\item $u_\theta f u_\theta^*=\theta(f)$ for all $\theta \in [\cR]$ and $f \in \rL^\infty(X)$, where $\theta(f)=f\circ \theta^{-1}$.
\item There exists a faithful normal conditional expectation $$\rE_{\rL^\infty(X)} : \rL(\cR) \rightarrow \rL^\infty(X) $$ that is characterized by $\rE_{\rL^\infty(X)}(u_\theta)=1_{\{ x \in X \mid \theta(x)=x \} }$
for every $\theta \in [\cR]$.
\end{itemize}

Pick $\mu \in \mathcal{M}(X)$. 
We define $\rL^2(\cR, \mu_r)$ to be the Hilbert space of quadratic integrable functions with respect to $\mu_r$. Then $\rL(\cR)$ is naturally represented on $\rL^2(\cR,\mu_r)$. For each $\theta  \in [\cR]$, the unitary $u_\theta$ acts on $\rL^2(\cR,\mu_r)$ by
\begin{align*}
 (u_\theta\xi)(x,y) =\xi(\phi^{-1}(x),y) \quad\text{ for }\xi \in \rL^2(\cR,\mu_r), (x,y)\in \cR,
\end{align*}
and $\rL^\infty(X)$ acts on $\rL^2(\cR)$ by $(f\xi)(x,y) = f(x)\xi(x,y)$ for $f \in \rL^\infty(X), \xi \in \rL^2(\cR,\mu_r), (x,y)\in \cR$.

On $\rL(\cR)$, define the natural faithful normal semifinite weight $\vphi = \tau_\mu \circ \rE_{\rL^\infty(X)}$, where $\tau_\mu = \int_X \cdot \, \rd \mu$. Then $\rL^2(\cR,\mu_r)$ can be identified with the GNS construction of $\rL(\cR)$ with respect to $\varphi$. The modular operator $\Delta_\vphi$ of $\vphi$ is then determined by $(\Delta_\vphi^{\mathbf{i}t} \xi)(x,y) = \delta_\mu(x,y)^{\mathbf{i}t} \xi(x,y)$, for $t \in \R$, $\xi \in \rL^2(\cR, \mu_r)$, $(x,y)\in \cR$.

We recall the following two well-known facts. For an inclusion of von Neumann algebras $A\subset M$, we denote by $\Aut(M/A)$ the group of automorphisms of $M$ that fix $A$ pointwise.

\begin{lemma}\label{lemma: link}
 Let $\cR$ be a nonsingular equivalence relation on a standard measure space $X$. The following statements hold true.
 \begin{itemize}
  \item For every $1$-cocycle $c \in \rZ^1(\cR) \subset \rL^\infty(\cR)$, the map $\Ad(c) : \rL(\cR) \to \mathbf B(\rL^2(\cR))$ is an automorphism of $\rL(\mathcal R)$ that fixes $\rL^\infty(X)$ pointwise, i.e.\ $\Ad(c) \in \Aut(\rL(\cR)/\rL^\infty(X))$.
  \item The map $c \mapsto \Ad(c)$ is a topological isomorphism between $\rZ^1(\cR)$ and $\Aut(\rL(\cR)/\rL^\infty(X))$ that sends $\rB^1(\cR)$ onto $\Ad(\cU(\rL^\infty(X)))$.
 \end{itemize}
\end{lemma}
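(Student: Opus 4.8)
The plan is to verify the first assertion by a direct computation on the GNS Hilbert space $\rL^2(\cR,\mu_r)$, and then to establish the algebraic, topological and coboundary parts of the second assertion separately. Fix $c\in\rZ^1(\cR)$ and regard it as a $\T$-valued, hence unitary, multiplication operator on $\rL^2(\cR,\mu_r)$. Since the copy of $\rL^\infty(X)$ inside $\rL(\cR)$ acts by $f\mapsto f^\ell$, which is again a multiplication operator, $c$ commutes with it and $\Ad(c)$ fixes $\rL^\infty(X)$ pointwise. On the generating unitaries $u_\theta$, using $(u_\theta\xi)(x,y)=\xi(\theta^{-1}(x),y)$ together with the cocycle identity one computes
\[
c\,u_\theta\,c^* = v_\theta\,u_\theta,\qquad v_\theta(x):=c\bigl(x,\theta^{-1}(x)\bigr)\in\cU(\rL^\infty(X)).
\]
Thus $\Ad(c)$ sends the generators of $\rL(\cR)$ into $\rL(\cR)$; applying the same to $c^*\in\rZ^1(\cR)$ shows $\Ad(c)$ is a normal $\ast$-automorphism of $\rL(\cR)$ fixing $\rL^\infty(X)$, i.e.\ $\Ad(c)\in\Aut(\rL(\cR)/\rL^\infty(X))$.

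For the second assertion, since distinct cocycles act as commuting multiplication unitaries we have $\Ad(c_1c_2)=\Ad(c_1)\Ad(c_2)$, so $c\mapsto\Ad(c)$ is a group homomorphism. If $\Ad(c)=\id$, the displayed formula forces $v_\theta=1$, that is $c(x,\theta^{-1}(x))=1$ for a.e.\ $x$ and every $\theta\in[\cR]$; as $\cR=\bigcup_{\theta\in[\cR]}\graph(\theta)$ up to null sets, this yields $c=1$ a.e., proving injectivity.

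The heart of the argument is surjectivity. Let $\alpha\in\Aut(\rL(\cR)/\rL^\infty(X))$ and set $A=\rL^\infty(X)$. Because $\alpha$ fixes $A$ and $\alpha(u_\theta)fu_\theta^*\alpha(\cdot)$ implements the same automorphism $\theta$ of $A$ as $u_\theta$ does, the unitary $\alpha(u_\theta)u_\theta^*$ commutes with $A$; since $A$ is a Cartan MASA in $\rL(\cR)$, we obtain $\alpha(u_\theta)=v_\theta u_\theta$ with $v_\theta\in\cU(A)$. Multiplicativity of $\alpha$ gives the cocycle relation $v_{\theta\circ\theta'}=v_\theta\,\theta(v_{\theta'})$, and a short computation gives the locality property: if $\theta,\theta'\in[\cR]$ agree on $E\in\mathfrak{P}(X)$ then $u_\theta1_E=u_{\theta'}1_E$, whence $v_\theta=v_{\theta'}$ on $\theta(E)$. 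Extending $\theta\mapsto v_\theta$ to the full pseudogroup $[[\cR]]$ by the same argument applied to the partial isometries $u_\sigma$, and choosing a countable partition $\cR=\bigsqcup_n\graph(\theta_n)$ with $\theta_n\in[[\cR]]$, one defines a measurable function $c\in\rL^0(\cR,\T)$ by the prescription $c(x,\theta^{-1}(x))=v_\theta(x)$; the cocycle relation for $v$ translates into the cocycle identity for $c$, so $c\in\rZ^1(\cR)$, and by the first part $\Ad(c)=\alpha$. I expect this reconstruction to be the main obstacle: one must check that the prescription is independent of the chosen $\theta$ and genuinely measurable, which is exactly a Feldman--Moore type localization where the locality property guarantees consistency of the pieces across the partition.

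Finally, for the topology, $c_n\to c$ in measure is equivalent to $\ast$-strong convergence of the associated multiplication unitaries on $\rL^2(\cR,\mu_r)$ (by dominated convergence, the functions being uniformly bounded), hence to $\Ad(c_n)\to\Ad(c)$ in $\Aut(\rL(\cR))$; conversely, testing $\Ad(c_n)\to\Ad(c)$ against the $u_\theta$ recovers $v_\theta^{(n)}\to v_\theta$ and then $c_n\to c$ in measure through the partition, so the map is a homeomorphism onto its image. For the coboundaries, given $w\in\cU(\rL^\infty(X))$ one writes the multiplication operator $\partial w=w^\ell(w^r)^\ast$, where $w^r$ (multiplication by $w(y)$) commutes with all $f^\ell$ and all $u_\theta$ and hence lies in $\rL(\cR)'$; therefore $\Ad(\partial w)=\Ad(w^\ell)=\Ad(w)$ on $\rL(\cR)$. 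This shows that $\Ad$ carries $\rB^1(\cR)$ precisely onto $\Ad(\cU(\rL^\infty(X)))$, which completes the proof.
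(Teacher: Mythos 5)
Your proof is correct, and it is genuinely different from what the paper does: the paper gives no argument at all for Lemma \ref{lemma: link}, simply citing \cite[Proposition 1.5.1]{Po04} and \cite[Theorem XIII.2.21]{Ta03b}. What you have written out is essentially the standard Feldman--Moore argument that underlies those references: the computation $c\,u_\theta\,c^*=v_\theta u_\theta$ on $\rL^2(\cR,\mu_r)$ for the forward direction, and for surjectivity the use of $\rL^\infty(X)$ being a MASA to write $\alpha(u_\theta)=v_\theta u_\theta$ with $v_\theta\in\cU(\rL^\infty(X))$, followed by the locality property and a countable partition $\cR=\bigsqcup_n\graph(\theta_n)$ to reassemble a measurable cocycle. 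All the steps check out: the cocycle relation $v_{\theta\circ\theta'}=v_\theta\,\theta(v_{\theta'})$ translates exactly into the $1$-cocycle identity for the reconstructed $c$; the identity $w^r\in\rL(\cR)'$ correctly identifies $\Ad(\rB^1(\cR))$ with $\Ad(\cU(\rL^\infty(X)))$; and the topological statement reduces, as you say, to the equivalence of convergence in measure with $\ast$-strong convergence of unimodular multiplication operators, checked against the generators $u_\theta$ and summed over the partition by dominated convergence. The only point worth flagging is a harmless convention mismatch: your formula gives $v_\theta(x)=c(x,\theta^{-1}(x))=\overline{c(\theta^{-1}(x),x)}$, whereas the paper records $\Ad(c)(u_\theta)=c(\theta)u_\theta$ with $c(\theta)(x)=c(\theta^{-1}(x),x)$; the two differ by the automorphism $c\mapsto\bar c$ of $\rZ^1(\cR)$ and do not affect the validity of the lemma, but you should fix one convention if the explicit formula is to be used later. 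What your approach buys is a self-contained proof; what the paper's approach buys is brevity, at the cost of sending the reader to Popa and Takesaki for exactly the measurable-selection and localization details you correctly identify as the only nontrivial content.
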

\begin{proof}
See \cite[Proposition 1.5.1]{Po04} and \cite[Theorem XIII.2.21]{Ta03b}.
\end{proof}

Note that if $c \in \rZ^{1}(\cR)$ and $\theta \in [\cR]$, then we have $\Ad(c)(u_\theta)=c(\theta)u_\theta$ where $c(\theta) \in \rL^{0}(X,\T)$ is defined by
\[ c(\theta)(x)=c(\theta^{-1}(x),x) \]
for a.e.\ $x \in X$.

Finally, we will later need the following lemma.

\begin{lemma}[{\cite[Theorem XIII.2.13]{Ta03b}}] \label{lemma: S}
Let $\cR$ be an ergodic equivalence relation on a standard measure space $X$. Then $\rS(\mathcal R) = \rS(\rL(\mathcal R))$.
\end{lemma}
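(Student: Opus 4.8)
The plan is to reduce the equality of the two $\rS$-invariants to the known description of each as the kernel of a single flow on the center of the associated core, handling the point $0$ separately via the type classification. Write $M=\rL(\cR)$, a factor since $\cR$ is ergodic, and recall that Connes' invariant is $\rS(M)=\bigcap_\varphi\Sp(\Delta_\varphi)$, the intersection running over all faithful normal semifinite weights $\varphi$ on $M$. The inclusion $\rS(M)\subseteq\rS(\cR)$ is then immediate by testing against the distinguished weights $\varphi_\mu=\tau_\mu\circ\rE_{\rL^\infty(X)}$: as recorded in Subsection~\ref{subsection: von Neumann}, the modular operator $\Delta_{\varphi_\mu}$ acts on $\rL^2(\cR,\mu_r)$ by multiplication by $\delta_\mu$, so $\Sp(\Delta_{\varphi_\mu})=\overline{\text{Range}}(\delta_\mu)$, and intersecting over the subfamily $\{\varphi_\mu\mid\mu\in\mathcal{M}(X)\}$ gives $\rS(M)\subseteq\bigcap_\mu\overline{\text{Range}}(\delta_\mu)=\rS(\cR)$.

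For the reverse inclusion on $\R^+_0$ I would pass to the core. The key observation is that, under the identification $\widehat{\R^+_0}\cong\R$ via $\langle t,\lambda\rangle=\lambda^{\mathbf{i}t}$, the dual cocycle $\widehat{\delta_\mu}(t)=\delta_\mu^{\mathbf{i}t}$ induces through Lemma~\ref{lemma: link} exactly the modular flow, since $\Ad(\widehat{\delta_\mu}(t))$ and $\sigma^{\varphi_\mu}_t$ both multiply $u_\theta$ by $\delta_\mu(\theta)^{\mathbf{i}t}$. Combining this with the skew-product/crossed-product dictionary $\rL(\cR\times_\Omega G)\cong\rL(\cR)\rtimes_{\widehat\Omega}\widehat G$ yields a canonical isomorphism
\[\rL(\core(\cR))=\rL(\cR\times_{\delta_\mu}\R^+_0)\cong\rL(\cR)\rtimes_{\sigma^{\varphi_\mu}}\R=\core(M)\]
that carries the translation action of $\R^+_0$ on the second coordinate of $X\times\R^+_0$ (the dual action of the skew product) to the trace-scaling flow on $\core(M)$, and identifies $\rL^\infty(X\times\R^+_0)^{\core(\cR)}$ with the center of $\core(M)$. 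By Connes' flow-of-weights theorem, $\rS(M)\cap\R^+_0$ is the kernel of the trace-scaling flow restricted to the center of $\core(M)$, while the excerpt records that $\rS(\cR)\cap\R^+_0$ is the kernel of the translation flow on $\rL^\infty(X\times\R^+_0)^{\core(\cR)}$; since these flows coincide under the isomorphism above, I conclude $\rS(\cR)\cap\R^+_0=\rS(M)\cap\R^+_0$.

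It remains to compare the two invariants at $0$, which I would do via the type classification: $0\in\rS(\cR)$ exactly when $\cR$ is of type $\III$, and the correspondence $\mu\mapsto\varphi_\mu$ sends $\cR$-invariant measures to traces on $M$, so $\cR$ and $M$ share the same type; in particular $0\in\rS(\cR)\Leftrightarrow 0\in\rS(M)$. Together with the previous step this gives $\rS(\cR)=\rS(M)$. The main obstacle is the structural middle step: verifying the core isomorphism with matching flows and invoking Connes' description of $\rS(M)\cap\R^+_0$ as the kernel of the flow of weights. Everything else reduces to bookkeeping around the modular data $\Delta_{\varphi_\mu}$ already recorded in the excerpt.
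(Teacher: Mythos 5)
The paper offers no argument of its own for this lemma: its ``proof'' is the citation to \cite[Theorem XIII.2.13]{Ta03b}, so there is nothing internal to compare against. Your sketch is, in substance, the standard argument underlying that citation, and it is correct in outline. The inclusion $\rS(\rL(\cR))\subseteq\rS(\cR)$ via the weights $\varphi_\mu=\tau_\mu\circ\rE_{\rL^\infty(X)}$ and $\Sp(\Delta_{\varphi_\mu})=\overline{\mathrm{Range}}(\delta_\mu)$ is fine (using that $\rS(M)\subseteq\Sp(\Delta_\varphi)$ for \emph{every} faithful normal semifinite weight, not just states). The middle step is the real content and you have identified it correctly: $\Ad(\delta_\mu^{\mathbf{i}t})$ is (up to the sign convention in the pairing $\widehat{\R^+_0}\cong\R$) the modular flow $\sigma^{\varphi_\mu}$ by Lemma \ref{lemma: link}, so $\rL(\core(\cR))\cong\core(\rL(\cR))$ equivariantly for the dual/translation actions; Feldman--Moore gives $\rZ(\rL(\core(\cR)))=\rL^\infty(X\times\R^+_0)^{\core(\cR)}$; and Connes--Takesaki identifies $\rS(M)\cap\R^+_0$ with the kernel of the flow of weights, matching the paper's description of $\rS(\cR)\cap\R^+_0$ as the kernel of the translation action. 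The only place where your write-up is genuinely thin is the point $0$: the implication ``$\cR$ of type $\I$ or $\II$ $\Rightarrow$ $\rL(\cR)$ semifinite'' is immediate from $\mu\mapsto\varphi_\mu$, but the converse (that $\rL(\cR)$ semifinite forces $\cR$ to admit an equivalent invariant measure) needs a sentence. It follows, for instance, by writing $\sigma^{\varphi_\mu}_t=\Ad(h^{\mathbf{i}t})$ with $h=\rd\varphi_\mu/\rd\Tr$ and observing that, since $\sigma^{\varphi_\mu}$ fixes the Cartan subalgebra $\rL^\infty(X)$ pointwise and this subalgebra is maximal abelian, $h$ is affiliated with $\rL^\infty(X)$, whence $\delta_\mu=\partial h$ is a coboundary and $h^{-1}\mu$ is $\cR$-invariant. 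Alternatively, the type $\II$ versus $\III$ dichotomy on both sides is itself read off from the (conjugate) flows of weights, so your middle step already subsumes the comparison at $0$. With either of these patches the proof is complete.
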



\section{Spectral gap for strongly ergodic equivalence relations}

We start this section by the following definition:

\begin{definition}
Let $(X,\mu)$ be a standard probability space. A nonsingular partial isomorphism $\theta$ of $(X,\mu)$ is said to be \emph{$\mu$-bounded} if the function $\log \left( \frac{\mathrm{d} (\mu \circ \theta)}{\mathrm{d}\mu} \right)$ is bounded on the support of $\theta$. In this case, the map $L_\theta : f \mapsto \theta(f)$ defines a bounded operator in $\B(\rL^{2}(X,\mu))$, where $\theta(f)=f \circ \theta^{-1}$.
\end{definition}

In the appendix, we show that such $\mu$-bounded partial isomorphisms are abundant in the full pseudo-group of any type $\III$ ergodic equivalence relation $\cR$ on a probability space $(X,\mu)$.

Our first main result in this section, Theorem \ref{general_gap} below, provides equivalent characterizations of the spectral gap property for finite sets of $\mu$-bounded automorphisms of $(X, \mu)$. 

\begin{theorem} \label{general_gap}
Let $(X,\mu)$ be a standard probability space. Let $\theta_1,\dots,\theta_n$ be a finite family of $\mu$-bounded automorphisms of $X$. Then the following assertions are equivalent: 
\begin{itemize}
\item [$(\rm i)$] There exists a constant $\kappa > 0$ such that for all measurable subsets $A \subset X$ we have
\[ \mu(A)(1-\mu(A)) \leq \kappa \sum_{k=1}^n \mu(A \triangle \theta_k(A)).\]
\item [$(\rm ii)$]  There exists a constant $\kappa > 0$ such that for all $f \in \rL^{2}(X,\mu)$, we have
\[ \| f-\mu(f) \|_2 \leq \kappa \sum_{k = 1}^n \| \theta_k(f)-f\|_{2}.\]
\item [$(\rm ii')$]  There exists a constant $\kappa > 0$ such that for all $f \in \rL^{2}(X,\mu)$, we have
\[ \| f-\mu(f) \|_2^{2} \leq \kappa \sum_{k = 1}^n \| \theta_k(f)-f\|_{2}^{2}.\]
\item [$(\rm iii)$]  The eigenvalue $0$ is a simple isolated point in the spectrum of
\[ T=\sum_{k = 1}^n |L_{\theta_k} - \mathbf{1} |  \in \B(\rL^{2}(X,\mu) ).\]
\item [$(\rm iii')$]  The eigenvalue $0$ is a simple isolated point in the spectrum of
\[ T'=\sum_{k = 1}^n |L_{\theta_k} - \mathbf{1} |^{2}  \in \B(\rL^{2}(X,\mu) ).\]
\end{itemize}
When these conditions are satisfied, we say that the family $\theta_1,\dots,\theta_n$ has a spectral gap.
\end{theorem}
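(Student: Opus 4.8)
The plan is to use assertion $(\rm ii')$ as a hub and to establish the four equivalences $(\rm i)\Leftrightarrow(\rm ii')$, $(\rm ii)\Leftrightarrow(\rm ii')$, $(\rm ii')\Leftrightarrow(\rm iii')$ and $(\rm iii)\Leftrightarrow(\rm iii')$. Throughout, write $A_k=L_{\theta_k}-\mathbf{1}$ and let $P$ denote the orthogonal projection of $\rL^2(X,\mu)$ onto the constants $\C\mathbf{1}$, so that $\|f-\mu(f)\|_2=\|(1-P)f\|_2$. Since each $\theta_k$ is $\mu$-bounded, $L_{\theta_k}$ is bounded with $\|A_k\|\le C$ for some $C$, and $L_{\theta_k}\mathbf{1}=\mathbf{1}$, so $\C\mathbf{1}\subseteq\ker A_k$ for every $k$. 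The basic identity is $\inp{T'f}{f}=\sum_k\|A_k f\|_2^2=\sum_k\|\theta_k(f)-f\|_2^2$, which shows that $(\rm ii')$ is precisely the coercivity estimate $\|(1-P)f\|_2^2\le\kappa\,\inp{T'f}{f}$.

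The three functional-analytic equivalences are then quick. For $(\rm ii)\Leftrightarrow(\rm ii')$ one uses only the elementary inequalities $\sum_k a_k^2\le(\sum_k a_k)^2\le n\sum_k a_k^2$ applied to $a_k=\|\theta_k(f)-f\|_2$, which squeezes the two right-hand sides within a factor of $n$. For $(\rm ii')\Leftrightarrow(\rm iii')$ one invokes the spectral theorem for the positive bounded operator $T'$: estimate $(\rm ii')$ forces $\ker T'\subseteq\C\mathbf{1}$, hence $\ker T'=\C\mathbf{1}$ is one-dimensional, and restricting $(\rm ii')$ to $(\C\mathbf{1})^\perp$ exhibits a spectral gap above $0$; conversely simplicity and isolation of $0$ return $(\rm ii')$ with $\kappa$ the reciprocal of the gap. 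For $(\rm iii)\Leftrightarrow(\rm iii')$ one first checks $\ker T=\ker T'=\bigcap_k\ker A_k$ (both kernels are the common zero set of the positive forms $\inp{|A_k|f}{f}$ and $\|A_k f\|_2^2$), so simplicity of $0$ is literally the same condition for $T$ and $T'$. For the isolation, the two operators are compared by the quadratic-form inequalities
\begin{equation*}
\inp{T'f}{f}\le C\,\inp{Tf}{f}\qquad\text{and}\qquad \inp{Tf}{f}\le\sqrt{n}\,\|f\|_2\,\inp{T'f}{f}^{1/2},
\end{equation*}
the first coming from $|A_k|^2\le C|A_k|$ and the second from Cauchy--Schwarz together with $\inp{|A_k|f}{f}\le\|A_k f\|_2\|f\|_2$. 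A gap for $T'$ transfers to $T$ through the first inequality, and a gap for $T$ transfers to $T'$ through the second, so $0$ is isolated for one exactly when it is for the other.

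The substantial point, and the main obstacle, is the equivalence $(\rm i)\Leftrightarrow(\rm ii')$, which is a Cheeger-type inequality. One direction is immediate: specializing $(\rm ii')$ to $f=\mathbf{1}_A$ gives $(\rm i)$, since $\|\mathbf{1}_A-\mu(A)\|_2^2=\mu(A)(1-\mu(A))$ and $\|\theta_k(\mathbf{1}_A)-\mathbf{1}_A\|_2^2=\mu(A\triangle\theta_k(A))$. The converse is where the work lies. Starting from $(\rm i)$, I would integrate the isoperimetric inequality over the level sets $A_t=\{f>t\}$ of a real function $f$: using $\{\theta_k(f)>t\}=\theta_k(A_t)$ and the coarea identities $\int_{\R}\mu(A_t\triangle\theta_k(A_t))\,\rd t=\|\theta_k(f)-f\|_1$ and $\int_{\R}\mu(A_t)(1-\mu(A_t))\,\rd t=\tfrac12\iint|f(x)-f(y)|\,\rd\mu(x)\rd\mu(y)$, this produces the $\rL^1$-Poincar\'e inequality
\begin{equation*}
\iint|f(x)-f(y)|\,\rd\mu(x)\rd\mu(y)\le 2\kappa\sum_k\|\theta_k(f)-f\|_1.
\end{equation*}
I would then bootstrap to $\rL^2$ in the classical way. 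Fixing a median $m$ of $f$, split $g=f-m=g_+-g_-$, where each $g_\pm\ge0$ has support of measure $\le\tfrac12$; applying the $\rL^1$ inequality to $u^2$ with $u=g_\pm$, bounding $\iint|u^2(x)-u^2(y)|\ge\tfrac12\|u\|_2^2$ from below (using $\mu(\{u=0\})\ge\tfrac12$) and $\|\theta_k(u^2)-u^2\|_1\le\|\theta_k(u)-u\|_2(\|\theta_k(u)\|_2+\|u\|_2)$ from above, and finally invoking $\mu$-boundedness through $\|\theta_k(u)\|_2\le C\|u\|_2$, yields $\|u\|_2^2\le C'\sum_k\|\theta_k(u)-u\|_2^2$ for such $u$. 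The pointwise contraction estimate $|(\theta_k g)_+-g_+|^2+|(\theta_k g)_--g_-|^2\le|\theta_k(g)-g|^2$ then recombines the bounds for $g_+$ and $g_-$ into $\|f-m\|_2^2\le C'\sum_k\|\theta_k(f)-f\|_2^2$, and since $\|f-\mu(f)\|_2\le\|f-m\|_2$ this is exactly $(\rm ii')$; the complex case follows by treating real and imaginary parts. The delicate points to get right are the coarea computations, the verification that the median choice controls the supports, and the Lipschitz contraction inequality for the positive and negative parts.
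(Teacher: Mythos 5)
Your proof is correct, and its overall architecture coincides with the paper's: the equivalences among $(\rm ii)$, $(\rm ii')$, $(\rm iii)$, $(\rm iii')$ are handled exactly as you do (the paper simply declares them clear, so your spelled-out quadratic-form comparisons between $T$ and $T'$ are a useful supplement), the implication $(\rm ii')\Rightarrow(\rm i)$ is the same specialization to indicators, and the hard direction is in both cases a Namioka-type level-set argument followed by Cauchy--Schwarz and a positive/negative-part splitting. The one place where you genuinely diverge is the $\rL^1\to\rL^2$ bootstrap. The paper applies $(\rm i)$ to the level sets $e_a(f^2)=\mathbf{1}_{[a,\infty)}(f^2)$ of the \emph{square} of a positive $f$ and invokes Lemma \ref{spectral_resolution}$(\rm iii)$, namely $\|f-\mu(f)\|_2^2\le\int_0^\infty\mu(e_a(f^2))\left(1-\mu(e_a(f^2))\right)\rd a$, which rests on the tensor-square observation $e_a(f^2)\otimes e_a(f^2)\le e_a(f\otimes f)$ giving $\int_0^\infty\mu(e_a(f^2))^2\,\rd a\le\mu(f)^2$; the sign splitting is then of a mean-zero real $f$ into $f_+-f_-$. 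You instead integrate $(\rm i)$ over the level sets of $f$ itself to obtain the exact $\rL^1$-Poincar\'e inequality for the symmetric double integral, and then normalize by a median so that $u=(f-m)_\pm$ vanishes on a set of measure at least $\tfrac12$, which supplies the lower bound $\iint|u^2(x)-u^2(y)|\,\rd\mu(x)\rd\mu(y)\ge\tfrac12\|u\|_2^2$. Both routes are elementary and of comparable length: the paper's avoids the median and the double integral, yours avoids the tensor trick. The steps you flag as delicate all check out --- the coarea identities, the identity $\{\theta_k(f)>t\}=\theta_k(\{f>t\})$, the contraction $|a_+-b_+|^2+|a_--b_-|^2\le|a-b|^2$, and the use of $\mu$-boundedness to control $\|\theta_k(u)\|_2$ and hence $\|\theta_k(u^2)-u^2\|_1$.
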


The proof of Theorem \ref{general_gap} relies on the following lemma which is inspired by the trick of Namioka that was used in \cite{Co75b, Sc80}. Our new input is item $({\rm iii})$ which is crucial in the proof of Theorem \ref{general_gap}.

\begin{lemma} \label{spectral_resolution}
Let $(X,\mu)$ be a probability space. For $a \geq 0$ and $f$ a positive measurable function on $X$, we use the notation $e_a(f):=\mathbf{1}_{[a,+\infty)}(f)$. 
\begin{itemize}
\item [$(\rm i)$]  For every positive function $f \in \rL^1(X,\mu)$, we have
\[ \mu(f)=\int_0^\infty \mu(e_a(f)) \, \rd a.\]
\item [$(\rm ii)$]  For every positive function $f,g \in \rL^1(X,\mu)$, we have
\[  \|f-g\|_1=\int_0^\infty \mu(e_{a}(f) \triangle e_{a}(g)) \, \rd a.\]
\item [$(\rm iii)$]  For every positive function $f \in \rL^2(X,\mu)$ we have 
\[ \| f-\mu(f)  \|^2_2  \leq \int_0^\infty \mu( e_{a}(f^{2})) \left(1-\mu(e_{a}(f^{2})) \right)  \rd a.\]
\end{itemize}
\end{lemma}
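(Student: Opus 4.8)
The plan is to prove the three integral identities/inequalities of Lemma \ref{spectral_resolution} by exploiting the ``layer-cake'' decomposition $f = \int_0^\infty \mathbf{1}_{[a,+\infty)}(f)\,\rd a$, valid for any positive measurable function $f$, together with Fubini's theorem. The key observation throughout is the pointwise identity $e_a(f)(x) = \mathbf{1}_{[a,\infty)}(f(x))$, so that $\int_0^\infty e_a(f)(x)\,\rd a = \int_0^{f(x)}\rd a = f(x)$ for a.e.\ $x$.

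For part $(\rm i)$, I would integrate the layer-cake identity over $X$ and swap the order of integration:
\[
\mu(f) = \int_X f(x)\,\rd\mu(x) = \int_X \int_0^\infty e_a(f)(x)\,\rd a\,\rd\mu(x) = \int_0^\infty \mu(e_a(f))\,\rd a,
\]
the interchange being justified by Tonelli since the integrand is nonnegative. For part $(\rm ii)$, the starting point is the pointwise identity $|f(x)-g(x)| = \int_0^\infty |e_a(f)(x) - e_a(g)(x)|\,\rd a$, which holds because for fixed $x$ the integrand equals $\mathbf{1}$ precisely on those $a$ lying between $f(x)$ and $g(x)$, an interval of length $|f(x)-g(x)|$. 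Since $e_a(f), e_a(g)$ are $\{0,1\}$-valued, $|e_a(f)-e_a(g)| = \mathbf{1}_{e_a(f)\triangle e_a(g)}$, so integrating over $X$ and applying Tonelli again yields $\|f-g\|_1 = \int_0^\infty \mu(e_a(f)\triangle e_a(g))\,\rd a$.

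For part $(\rm iii)$, the idea is to reduce the $\rL^2$ estimate to part $(\rm i)$ applied to $f^2$ via the variance formula. First I would write $\|f-\mu(f)\|_2^2 = \mu(f^2) - \mu(f)^2 \leq \mu(f^2)$, and by part $(\rm i)$ applied to the positive $\rL^1$-function $f^2$ we get $\mu(f^2) = \int_0^\infty \mu(e_a(f^2))\,\rd a$. This already gives the weaker bound $\int_0^\infty \mu(e_a(f^2))\,\rd a$; the stated inequality contains the sharper integrand $\mu(e_a(f^2))(1-\mu(e_a(f^2)))$, so I must recover the subtracted term $\mu(f)^2$. The natural route is to bound $\mu(f)^2$ from below by an integral expression involving the level sets of $f^2$. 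Using part $(\rm i)$ for $f$ together with a Cauchy--Schwarz-type comparison between $\mu(f) = \int_0^\infty \mu(e_b(f))\,\rd b$ and the level sets of $f^2$ (noting $e_a(f^2) = e_{\sqrt a}(f)$), one expects to obtain $\mu(f)^2 \geq \int_0^\infty \mu(e_a(f^2))^2\,\rd a$, after which subtracting gives exactly the claimed bound. The main obstacle, and the only genuinely nontrivial point, is establishing this lower bound $\mu(f)^2 \geq \int_0^\infty \mu(e_a(f^2))^2\,\rd a$; I anticipate proving it by writing both sides as double integrals over the level-set parameters and comparing the integrands, using the monotonicity $\mu(e_a(f^2)) = \mu(e_{\sqrt a}(f))$ is decreasing in $a$ so that on the diagonal region the measure of the smaller level set dominates, which after the Tonelli bookkeeping produces precisely the square term.
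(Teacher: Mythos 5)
Your parts (i) and (ii) coincide with the paper's proof: the layer-cake identity $\int_0^\infty e_a(f(x))\,\rd a=f(x)$ plus Tonelli, and the observation that $|e_a(f)-e_a(g)|=\mathbf 1_{e_a(f)\triangle e_a(g)}$. For part (iii) you make the same reduction as the paper — it suffices to prove $\int_0^\infty \mu(e_a(f^2))^2\,\rd a\le\mu(f)^2$ — but you propose a different argument for this key inequality, and your sketch does go through. Writing $\phi(t)=\mu(e_t(f))$ and noting $e_a(f^2)=e_{\sqrt a}(f)$, the substitution $a=t^2$ gives $\int_0^\infty\mu(e_a(f^2))^2\,\rd a=\int_0^\infty\phi(\sqrt a)^2\,\rd a=2\int_0^\infty t\,\phi(t)^2\,\rd t=2\iint_{\{b<t\}}\phi(t)^2\,\rd b\,\rd t$, while $\mu(f)^2=\bigl(\int_0^\infty\phi(t)\,\rd t\bigr)^2=2\iint_{\{b<t\}}\phi(b)\phi(t)\,\rd b\,\rd t$; since $\phi$ is nonincreasing, $\phi(b)\ge\phi(t)$ on $\{b<t\}$, which yields the desired bound. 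The paper instead proves the same inequality by a one-line tensor trick: on $(X,\mu)\otimes(X,\mu)$ one has $e_a(f^2)\otimes e_a(f^2)\le e_a(f\otimes f)$ pointwise (if $f(x)^2\ge a$ and $f(y)^2\ge a$ then $f(x)f(y)\ge a$), so $\mu(e_a(f^2))^2\le(\mu\otimes\mu)(e_a(f\otimes f))$, and integrating in $a$ and applying (i) to $f\otimes f$ gives $\mu(f)^2$. Both arguments are elementary and of comparable length; the paper's avoids any change of variables or monotonicity consideration, while yours stays entirely within the single space $X$ and makes the role of the decreasing distribution function explicit. The one thing to flag is that your write-up leaves the decisive computation at the level of a plan (``I anticipate proving it by\dots''); as shown above the plan is sound, but it needs to be carried out explicitly.
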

\begin{proof}
$(\rm i)$ By Fubini's theorem, we have
\[ \int_0^\infty \mu(e_a(f)) \, \rd a= \int_0^\infty \int_{X} e_a(f(x)) \, \rd\mu(x) \, \rd a=\int_X \int_0^\infty e_a(f(x)) \, \rd a \, \rd \mu(x)=\int_X f(x) \, \rd \mu(x). \]

$(\rm ii)$ By Fubini's theorem, we have
\[ \int_0^\infty \mu(e_{a}(f) \triangle e_{a}(g)) \, \rd a =\int_X \int_0^\infty  | e_{a}(f(x)) - e_{a}(g(x)) | \, \rd a \, \rd \mu(x)=\int_X |f(x)-g(x)| \, \rd \mu(x). \]

$(\rm iii)$ First, note that $\| f- \mu(f) \|_2^2=\mu(f^2)-\mu(f)^2$ and that
\[ \mu(f^2)=\int_0^\infty \mu(e_a(f^2)) \, \rd a.\]
Therefore, we only have to show that 
\[ \int_0^\infty \mu( e_{a}(f^{2}))^2 \, \rd a  \leq \mu(f)^2. \]
On $(X , \mu) \otimes (X, \mu)$, we have $e_{a}(f^{2}) \otimes e_{a}(f^{2}) \leq e_{a}(f \otimes f)$. Hence, by appying $\mu \otimes \mu$ we get 
\[ \mu( e_{a}(f^{2}))^2 \leq (\mu \otimes \mu)(e_a(f \otimes f)), \]
and thus, after integrating over $a$ and using $(\rm i)$, we finally  get
\begin{equation*}
 \int_0^\infty \mu( e_{a}(f^{2}))^2 \, \rd a  \leq \int_0^\infty (\mu \otimes \mu)(e_a(f \otimes f)) \, \rd a \leq (\mu \otimes \mu)(f \otimes f)=\mu(f)^2. \qedhere
\end{equation*}
\end{proof}

\begin{proof}[Proof of Theorem \ref{general_gap}]
The equivalences $(\rm ii) \Leftrightarrow (\rm ii') \Leftrightarrow (\rm iii) \Leftrightarrow (\rm iii')$ are clear. On the other hand, $(\rm ii')$ applied to the indicator function $\mathbf{1}_A$ gives $(\rm i)$. Hence we only have to prove that $(\rm i) \Rightarrow (\rm ii)$. Assume that $(\rm i)$ is satisfied for some constant $\kappa > 0$.  Take a positive function $f \in \rL^{2}(X,\mu)$. Lemma \ref{spectral_resolution} gives
\[ \| f-\mu(f) \|_2^{2} \leq \kappa \sum_{k = 1}^n  \|f^{2}-\theta_k(f^{2}) \|_1. \]
Let $C=\max_k \| \mathbf{1} + L_{\theta_k} \|$. Then for all $k$, we have
\[ \|f^{2}-\theta_k(f^{2}) \|_1 \leq \|f+\theta_k(f) \|_2 \cdot \| f- \theta_k(f) \|_2 \leq  C \| f \|_2 \, \|f-\theta_k(f) \|_2, \]
hence we obtain 
\[ \| f-\mu(f) \|_2^{2} \leq \kappa C  \| f \|_2 \sum_{k = 1}^n \|f-\theta_k(f) \|_2.  \]

Now let $f \in \rL^2(X,\mu)$ be a real valued function with $\mu(f)=0$. Write $f=f_+-f_-$ where $f_+$ and $f_-$ are the positive and the negative part of $f$. Then we have
\[ \| f \|_2^2 \leq 2\left( \| f_+-\mu(f_+) \|_2^2+\| f_--\mu(f_-) \|_2^2 \right), \]
hence 
\[ \| f \|_2\leq 2 \kappa C  \sum_{k = 1}^n  \left( \|f_+-\theta_k(f_+) \|_2 + \|f_- -\theta_k(f_-) \|_2 \right),  \]
and since $\|f_{\pm}-\theta_k(f_{\pm}) \|_2 \leq \|f-\theta_k(f) \|_2 $, we obtain  
\[ \| f \|_2  \leq 4\kappa C  \sum_{k = 1}^n \|f-\theta_k(f) \|_2 .\]
Finally, by applying this inequality to $f-\mu(f)$ we obtain $(2)$ for all real valued functions $f \in \rL^2(X,\mu)$. Finally, for complex valued functions, one can just decompose over the real and imaginary part to get the desired result.
\end{proof}

As we saw, the proof of Theorem \ref{general_gap} is very elementary. It moreover provides a short proof of the following well-known result (see the proof of \cite[Proposition 2.3]{Sc80}).

\begin{corollary}
Let $\Gamma$ be any countable discrete group and $\Gamma \curvearrowright (X, \mu)$ any strongly ergodic pmp action. The following conditions are equivalent:
\begin{itemize}
\item [$(\rm i)$] The action $\Gamma \curvearrowright (X, \mu)$ does not have spectral gap.

\item [$(\rm ii)$] There exists an $I$-sequence in the sense of \cite[Section 2]{Sc80}, i.e.\ there exists a sequence $(A_i)_{i \in \N}$ of proper measurable subsets of $X$ such that $\mu(A_i) \to 0$ and $\frac{\mu(A_i \triangle \gamma A_i)}{\mu(A_i)} \to 0$ for all $\gamma \in \Gamma$.
\end{itemize}
\end{corollary}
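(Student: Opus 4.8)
The plan is to use Theorem \ref{general_gap} to convert the spectral gap of the Koopman representation into the purely measure-theoretic subset inequality, and then to play the quantitative failure of that inequality against strong ergodicity. First I would record the translation. Since the action is pmp, each $\gamma \in \Gamma$ acts as a $\mu$-preserving (hence $\mu$-bounded) automorphism with $L_\gamma$ unitary and $\mu(\gamma(f)) = \mu(f)$; writing $\xi = f - \mu(f)$ one has $\pi(\gamma)\xi - \xi = \gamma(f) - f$. By the standard reformulation of ``$\pi$ does not weakly contain the trivial representation'', the action has spectral gap if and only if there are a finite family $\gamma_1,\dots,\gamma_n \in \Gamma$ and $\kappa > 0$ with $\|f - \mu(f)\|_2^2 \leq \kappa \sum_k \|\gamma_k(f) - f\|_2^2$ for all $f \in \rL^2(X,\mu)$. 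This is exactly condition $(\rm ii')$ of Theorem \ref{general_gap} for the family $\gamma_1,\dots,\gamma_n$, so by that theorem it is equivalent to the subset inequality $(\rm i)$ there. Hence the action has spectral gap precisely when the subset inequality $\mu(A)(1-\mu(A)) \leq \kappa \sum_k \mu(A \triangle \gamma_k A)$ holds for \emph{some} finite family and some constant.

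For $(\rm ii) \Rightarrow (\rm i)$ I would argue by contradiction: given an $I$-sequence $(A_i)$ and assuming spectral gap, apply the subset inequality to $A = A_i$ and divide by $\mu(A_i) > 0$ to get $1 - \mu(A_i) \leq \kappa \sum_k \mu(A_i \triangle \gamma_k A_i)/\mu(A_i)$. The left-hand side tends to $1$ while the right-hand side is a finite sum of terms tending to $0$, which is absurd. This direction does not use strong ergodicity.

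The substantive direction is $(\rm i) \Rightarrow (\rm ii)$. Enumerate $\Gamma = \{\gamma_1, \gamma_2, \dots\}$. Since the action lacks spectral gap, the subset inequality fails for every finite family and every constant; I would apply this to the family $\{\gamma_1, \dots, \gamma_n\}$ with the constant $\kappa = n$ to produce, for each $n$, a measurable set $B_n$ with
\[ \mu(B_n)(1 - \mu(B_n)) > n \sum_{k=1}^n \mu(B_n \triangle \gamma_k B_n). \]
Because the right-hand side is nonnegative, $B_n$ is automatically proper. For fixed $k$ and $n \geq k$ this forces $\mu(B_n \triangle \gamma_k B_n) \leq \tfrac{1}{n}\mu(B_n)(1-\mu(B_n)) \leq \tfrac{1}{4n}$, so $(B_n)$ is a $\Gamma$-almost invariant sequence of subsets. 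Strong ergodicity then yields $\mu(B_n)(1-\mu(B_n)) \to 0$. Replacing $B_n$ by its complement whenever $\mu(B_n) > \tfrac12$ (which alters neither $\mu(B_n \triangle \gamma B_n)$ nor the product) gives proper sets $A_n$ with $\mu(A_n) \leq \tfrac12$, whence $\mu(A_n) \leq 2\mu(A_n)(1-\mu(A_n)) \to 0$; and for fixed $k$, $n \geq k$ the displayed inequality gives $\mu(A_n \triangle \gamma_k A_n)/\mu(A_n) \leq \tfrac{1}{n}(1-\mu(A_n)) \leq \tfrac1n \to 0$. Thus $(A_n)$ is the required $I$-sequence.

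The main obstacle is precisely this last step: securing the sharp ratio control $\mu(A_n \triangle \gamma A_n)/\mu(A_n) \to 0$ rather than the weaker $\mu(A_n \triangle \gamma A_n) \to 0$, while simultaneously forcing $\mu(A_n) \to 0$. The device handling both at once is the strengthened failure of the subset inequality with the growing constant $\kappa = n$, which bakes the factor $\tfrac1n$ directly into the ratio, combined with strong ergodicity, which is exactly what collapses $\mu(A_n)$ to $0$. Without strong ergodicity one would only extract fat almost invariant sets, not an $I$-sequence.
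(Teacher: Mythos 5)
Your proof is correct and follows essentially the same route as the paper: both reduce to the subset inequality via Theorem \ref{general_gap}, extract from its failure (with growing constants over an enumeration of $\Gamma$) a sequence with $\mu(A_n \triangle \gamma A_n)/\bigl(\mu(A_n)(1-\mu(A_n))\bigr) \to 0$, invoke strong ergodicity to force $\mu(A_n)(1-\mu(A_n)) \to 0$, and pass to complements to get $\mu(A_n) \to 0$. The only difference is that you spell out the diagonal extraction that the paper leaves implicit.
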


\begin{proof}
We only have to prove $(\rm i) \Rightarrow (\rm ii)$. Since the action $\Gamma \curvearrowright (X, \mu)$ does not have spectral gap, the equivalence $(\rm i) \Leftrightarrow (\rm ii)$ in Theorem \ref{general_gap} implies the existence of a sequence $(A_i)_{i \in \N}$ of proper measurable subsets of $X$ such that 
\[ \frac{\mu(A_i \triangle \gamma(A_i))}{\mu(A_i)(1 - \mu(A_i))}  \rightarrow 0\]
for all $\gamma \in \Gamma$. Since the action $\Gamma \curvearrowright (X, \mu)$ is strongly ergodic, we infer that $\mu(A_i)(1 - \mu(A_i)) \to~0$. Up to replacing $A_i$ by $X \setminus A_i$ for each $i \in \N$, we may assume that $\mu(A_i) \to 0$ and hence $(A_i)_{i \in \N}$ is an $I$-sequence. 
\end{proof}

The second main result in this section, Theorem \ref{gap_strongly_ergodic} below, gives a spectral gap characterization of strongly ergodic equivalence relations.

\begin{theorem}[Theorem \ref{thmstar:spectral-gap}]\label{gap_strongly_ergodic}
Let $\mathcal{R}$ be a strongly ergodic equivalence relation on a standard probability space $(X,\mu)$ that either preserves $\mu$ or is of type $\III$. Then there exists a finite family of $\mu$-bounded elements $\theta_1,\dots,\theta_n \in [\mathcal{R}]$ that has spectral gap.
\end{theorem}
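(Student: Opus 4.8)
The plan is to argue by contraposition: assuming that \emph{no} finite family of $\mu$-bounded elements of $[\mathcal R]$ has spectral gap, I will produce a nontrivial almost $\mathcal R$-invariant sequence of measurable sets, contradicting strong ergodicity. The first step is to fix once and for all a countable family $(\theta_k)_{k \in \N}$ of $\mu$-bounded elements of $[\mathcal R]$ that is dense in $[\mathcal R]$ for the topology of $\rd_\mu$: in the measure-preserving case every element of $[\mathcal R]$ is $\mu$-bounded and such a family exists by separability of the Polish group $[\mathcal R]$, while in the type $\III$ case I would invoke the abundance of $\mu$-bounded partial isomorphisms established in the appendix. The point of density is that, for a sequence of \emph{indicator} functions, almost invariance under all the $\theta_k$ upgrades to almost $\mathcal R$-invariance: since the elements of $[\mathcal R]$ are nonsingular, $\mu(\theta_k(A)\triangle g(A))$ is small uniformly in $A$ as soon as $\rd_\mu(\theta_k,g)$ is small, so a diagonal approximation of an arbitrary $g\in[\mathcal R]$ by the $\theta_k$ transports the almost invariance. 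This uniformity is special to indicator functions and fails for general $\rL^2$-functions, which is exactly why the argument must ultimately be run with level sets.

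Next I would extract the sequence. For each $n$, the finite family $\theta_1,\dots,\theta_n$ has, by hypothesis, no spectral gap, so by the equivalence $(\rm i)\Leftrightarrow(\rm ii')$ of Theorem \ref{general_gap} there is a measurable set $A_n$ with
\[ \sum_{k=1}^n \mu\bigl(A_n \triangle \theta_k(A_n)\bigr) \;<\; \tfrac1n\,\mu(A_n)\bigl(1-\mu(A_n)\bigr). \]
In particular $\mu(A_n \triangle \theta_k(A_n)) \to 0$ for every fixed $k$, so by the previous paragraph $(A_n)_n$ is almost $\mathcal R$-invariant, while the relative boundary $\sum_k \mu(A_n\triangle\theta_k(A_n))/(\mu(A_n)(1-\mu(A_n)))$ tends to $0$. (One may equally start from the almost invariant unit vectors provided by the failure of condition $(\rm iii)$ of Theorem \ref{general_gap} and pass to sets through the Namioka-type slicing of Lemma \ref{spectral_resolution}, using part $(\rm iii)$ of that lemma to locate a level set of small relative boundary; both routes yield the same kind of sequence.)

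By strong ergodicity we must have $\mu(A_n)(1-\mu(A_n)) \to 0$, and after replacing $A_n$ by its complement where necessary—which changes neither the almost invariance nor the relative boundary, since each $\theta_k$ is an automorphism—I may assume $\mu(A_n)\to 0$ with $\mu(A_n)>0$. Thus $(A_n)_n$ is an \emph{$I$-sequence}: an almost $\mathcal R$-invariant sequence of nonempty sets whose measure tends to $0$ and whose boundary is $o(\mu(A_n))$. The decisive step is then to \emph{amplify} such an $I$-sequence into an almost $\mathcal R$-invariant sequence $(B_n)_n$ with $\mu(B_n)\to 1/2$: using ergodicity of $\mathcal R$ one tiles a set of measure close to $1/2$ by translates $\sigma^n_j(A_n)$ of the small set, arranging the tiling to be approximately equivariant so that, thanks to the F\o{}lner-type estimate $\mu(A_n\triangle g(A_n))=o(\mu(A_n))$, only a vanishing amount of boundary survives in the union. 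Such a $(B_n)_n$ is nontrivial and almost $\mathcal R$-invariant, contradicting strong ergodicity and completing the proof.

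I expect this amplification to be the main obstacle. The difficulty is that translating an almost invariant set by a full-group element conjugates the transformations one must control, so a naive disjoint union of $\approx 1/(2\mu(A_n))$ copies lets the internal boundaries accumulate; the tiling must instead be made approximately invariant under finitely many generators at a time, after which one diagonalizes over the generators and over the error. In the measure-preserving case this is the classical amplification of asymptotically invariant sequences (see \cite{Sc80}), which I would cite. The genuinely new point is the type $\III$ case, where the translates distort $\mu$ and the tiling must additionally keep the Radon--Nikodym cocycle under control—precisely the role played by restricting to $\mu$-bounded elements and by the structural facts about type $\III$ relations recorded in the appendix.
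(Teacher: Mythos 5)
The first half of your argument---extracting, via Theorem \ref{general_gap} and a countable dense family of $\mu$-bounded elements, a sequence $(A_n)_n$ with $\mu(A_n)>0$, $\mu(A_n)\to 0$ and $\mu(A_n\triangle\theta_k(A_n))=o(\mu(A_n))$ for each fixed $k$---matches the paper's first step (the paper phrases this as ``$\mathcal R$ admits small almost invariant sets'' and works with a net rather than your diagonal sequence; either is fine here, and your observation that $\rd_\mu$-density upgrades absolute almost invariance of indicator functions is correct). The gap is in the amplification step, which you rightly flag as the main obstacle but then discharge by appealing to ``the classical amplification of asymptotically invariant sequences (see \cite{Sc80})''. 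No such amplification exists in \cite{Sc80}, and it cannot exist at that level of generality: Example 2.7 of \cite{Sc80}, cited in the introduction of this paper, is a strongly ergodic pmp action of a countable group that fails to have spectral gap, i.e.\ it admits an $I$-sequence that \emph{cannot} be amplified into a nontrivial almost invariant sequence. The remark following the proof of Theorem \ref{gap_strongly_ergodic} explicitly warns that \cite[Proposition 2.3]{Sc80} is not applicable here. Moreover, your sequence is \emph{relatively} almost invariant only under the fixed countable family $(\theta_k)_k$: the estimate $\mu(A_n\triangle g(A_n))=o(\mu(A_n))$ does not transfer by density to other $g\in[\mathcal R]$ (the approximation error $\rd_\mu(g,\theta_k)$ is not small compared with $\mu(A_n)\to 0$), and in particular not to the conjugates $\sigma_j^{-1}\theta_k\sigma_i$ that any tiling by translates $\sigma_j(A_n)$ forces you to control. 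So the tiling scheme, as described, has no mechanism for bounding the internal boundaries, and the type $\III$ difficulties you mention come on top of that.

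What actually makes the theorem true---and what is absent from your proposal---is that the full group is large enough to \emph{localize} the small almost invariant sets: Lemma \ref{corner} shows that if $\mathcal R$ admits small almost invariant sets, then for every nonzero $Y\subset X$ one can find arbitrarily small $A\subset Y$ whose boundary relative to $Y$, namely $\mu(A\cap\theta_k(Y\setminus A))+\mu((Y\setminus A)\cap\theta_k(A))$, is at most $\varepsilon\mu(A)$. Its proof takes a weak$^*$ limit of the normalized measures $\mu(A_i\cap\,\cdot\,)/\mu(A_i)$ and shows the limit functional gives positive mass to $Y$, using $\mathcal R$-invariance in the pmp case and, in the type $\III$ case, a $\mu$-bounded element sending $X\setminus Y$ into $Y$ (Theorem \ref{bounded_typeIII}). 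With this localization, the paper replaces the tiling by a maximality argument in the complete boolean algebra $\mathfrak{P}(X)$: among all $A$ with $\mu(A)\le\tfrac12$ and $\mu(A\triangle\theta_k(A))\le\varepsilon\mu(A)$ for all $k$, a maximal element must have measure exactly $\tfrac12$, because otherwise one adjoins a localized piece $B\subset X\setminus A$; disjointness makes the boundary of $A\cup B$ additively controlled, and no conjugated transformations ever appear. This gluing is exactly the step your tiling is trying to perform, and it is genuinely different from (and far simpler than) an equivariant Rokhlin-type tiling. Without Lemma \ref{corner} and the maximality argument, or a working substitute for them, your proof does not close.
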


Before proving Theorem \ref{gap_strongly_ergodic}, we need to introduce some terminology.

\begin{definition}\label{small_invariant} 
Let $\mathcal R$ be an ergodic equivalence relation on a probability space $(X,\mu)$. We say that $\mathcal{R}$ admits \emph{small almost invariant sets} (with respect to $\mu$) if for any $\varepsilon > 0$ and any finite family of $\mu$-bounded elements $\theta_1,\dots,\theta_n \in [\mathcal{R}]$, we can find a nonzero measurable subset $A \subset X$ such that $\mu(A) < \varepsilon$ and 
\[  \forall k \in \{1,\dots,n \}, \quad \mu(A \triangle \theta_k(A)) \leq \varepsilon \mu(A).\]
\end{definition}

The following lemma will be a crucial step in the proof of Theorem \ref{gap_strongly_ergodic}. 

\begin{lemma} \label{corner}
Let $\mathcal{R}$ be an ergodic equivalence relation on a probability space $(X,\mu)$ that either preserves $\mu$ or is of type $\III_\lambda, \; 0 < \lambda \leq 1$. Suppose that $\mathcal{R}$ admits small almost invariant sets. Then for any nonzero measurable subset $Y \subset X$ and any finite family of $\mu$-bounded elements $\theta_1,\dots,\theta_n \in [\mathcal{R}]$, we can find a nonzero measurable subset $A \subset Y$ such that $\mu(A) < \varepsilon$ and 
\[  \forall k \in \{1,\dots,n \}, \quad \mu(A \cap \theta_k(Y \setminus A)) + \mu((Y \setminus A) \cap \theta_k(A)) \leq \varepsilon \mu(A). \]
In particular, the reduced equivalence relation $\mathcal{R}_{Y}$ admits small almost invariant sets.
\end{lemma}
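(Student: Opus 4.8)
The plan is to produce $A$ by intersecting $Y$ with a genuinely small almost invariant subset $B$ of $X$, after first arranging that a definite proportion of the mass of \emph{any} almost invariant set is carried inside $Y$.

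\emph{Reduction of the boundary term.} I would first record the elementary observation that if $B\subseteq X$ and we set $A=B\cap Y$, then $A\subseteq B$ while $Y\setminus A\subseteq X\setminus B$, and since each $\theta_k$ is a bijection of $X$ we have $\theta_k(A)\subseteq\theta_k(B)$ and $\theta_k(Y\setminus A)\subseteq X\setminus\theta_k(B)$. Consequently
\[ A\cap\theta_k(Y\setminus A)\subseteq B\setminus\theta_k(B),\qquad (Y\setminus A)\cap\theta_k(A)\subseteq\theta_k(B)\setminus B, \]
so the boundary quantity in the statement is bounded by $2\,\mu(B\triangle\theta_k(B))$. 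Thus it suffices to find a nonzero $A=B\cap Y$ with $\mu(B)<\varepsilon$, with $\mu(B\triangle\theta_k(B))$ small, and crucially with $\mu(A)=\mu(B\cap Y)$ comparable to $\mu(B)$.

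\emph{Forcing mass into $Y$ (the main step).} The heart of the matter is to construct, from the type hypothesis and ergodicity alone, a finite family of $\mu$-bounded elements $\eta_1,\dots,\eta_p\in[\cR]$ and a constant $c'>0$ such that every $B$ which is sufficiently almost invariant under $\eta_1,\dots,\eta_p$ satisfies $\mu(B\cap Y)\ge c'\,\mu(B)$. I would achieve this by covering $X$ by finitely many $\mu$-bounded translates of $Y$, i.e.\ by finding $\eta_i\in[\cR]$ with Radon--Nikodym derivatives in $[M^{-1},M]$ and $X=\bigcup_{i=1}^p\eta_i(Y)$. Granting this, for almost invariant $B$ one estimates $\mu(B)\le\sum_i\mu(B\cap\eta_i(Y))\le M\sum_i\mu(\eta_i^{-1}(B)\cap Y)$, and then compares $\mu(\eta_i^{-1}(B)\cap Y)$ with $\mu(B\cap Y)$ up to the error $\mu(\eta_i^{-1}(B)\triangle B)\le M\,\mu(B\triangle\eta_i(B))$, which almost invariance makes as small as desired relative to $\mu(B)$; this yields $\mu(B\cap Y)\ge c'\mu(B)$ with $c'$ depending only on $p$ and $M$. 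The existence of such a finite $\mu$-bounded cover is exactly where the assumption that $\cR$ is of type $\II_1$ or $\III_\lambda$, $0<\lambda\le1$, enters, and I expect this to be the main obstacle. In the $\II_1$ case it is elementary: partitioning $X$ into $p=\lceil\mu(Y)^{-1}\rceil$ pieces $Z_i$ of measure $\le\mu(Y)$, and using that in an ergodic $\II_1$ relation every set of measure $\le\mu(Y)$ is the image of a subset of $Y$ under a measure-preserving (hence $\mu$-bounded) element of $[\cR]$, one obtains $\eta_i$ with $\eta_i(Y)\supseteq Z_i$ and hence $\bigcup_i\eta_i(Y)=X$. In the type $\III_\lambda$ case one must instead compress measure, and here I would invoke that the essential range of the modulus $\delta_\mu$ is $\{0\}\cup\lambda^{\Z}$ (resp.\ $\R^+$ when $\lambda=1$), together with the abundance of $\mu$-bounded partial isomorphisms established in the appendix: this lets one map small pieces of $X$ into $Y$ by $\mu$-bounded partial isomorphisms and extend them to globally $\mu$-bounded elements of $[\cR]$, filling in the complement $\mu$-boundedly by ergodicity. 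This step genuinely fails for type $\III_0$, consistent with the fact that $\III_0$ relations are never strongly ergodic.

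\emph{Conclusion.} With the $\eta_i$ in hand, I would apply the hypothesis that $\cR$ admits small almost invariant sets to the enlarged finite family $\{\theta_1,\dots,\theta_n,\eta_1,\dots,\eta_p\}$ and to a small enough parameter, obtaining a nonzero $B\subseteq X$ with $\mu(B)<\varepsilon$ and $\mu(B\triangle\phi(B))\le\delta\mu(B)$ for every $\phi$ in the family, where $\delta>0$ is at our disposal. Setting $A=B\cap Y$, the main step gives $\mu(A)\ge c'\mu(B)>0$, so $A$ is nonzero with $\mu(A)\le\mu(B)<\varepsilon$, and the reduction of the boundary term gives, for each $k$,
\[ \mu(A\cap\theta_k(Y\setminus A))+\mu((Y\setminus A)\cap\theta_k(A))\le 2\,\mu(B\triangle\theta_k(B))\le 2\delta\,\mu(B)\le\frac{2\delta}{c'}\,\mu(A). \]
Choosing $\delta$ so small that $2\delta/c'\le\varepsilon$ completes the construction of $A$. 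Finally, for the last assertion it suffices to note that when $\theta\in[\cR_Y]$, so that $\theta(Y)=Y$, the boundary quantity for $A\subseteq Y$ reduces exactly to $\mu(A\triangle\theta(A))$; applying what has been proved to finite families drawn from $[\cR_Y]$ therefore shows precisely that $\cR_Y$ admits small almost invariant sets.
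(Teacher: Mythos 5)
Your proposal is correct and follows essentially the same route as the paper: intersect a small almost invariant set $B$ with $Y$, bound the boundary quantity by $\mu(B\triangle\theta_k(B))$, and secure a definite fraction of the mass of $B$ inside $Y$ using a finite cover of $X$ by translates of $Y$ in the pmp case and the appendix's $\mu$-bounded compression of $X\setminus Y$ into $Y$ (Theorem \ref{bounded_typeIII}) in the type $\III_\lambda$ case. The only difference is packaging: you prove the key inequality $\mu(B\cap Y)\ge c'\mu(B)$ quantitatively with explicit constants, whereas the paper extracts a weak$^\ast$ limit $\varphi$ of the normalized measures $\frac{1}{\mu(A_i)}\mu(A_i\cap\,\cdot\,)$ along a subnet and shows $\varphi(\mathbf 1_Y)>0$ by the same two geometric inputs.
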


\begin{proof}
Since $\mathcal{R}$ admits small almost invariant sets, we can find a net $(A_i)_{i \in I}$ of measurable subsets of $X$ with $\mu(A_i) > 0$ for all $i$, such that $\mu(A_i) \to 0$ and
\[ \frac{\mu(A_i \triangle \theta(A_i))}{\mu(A_i)}  \rightarrow 0 \]
 when $i \to \infty$ for all $\mu$-bounded $\theta \in [\mathcal{R}]$. Up to extracting a subnet, we may also assume that the net of probability measures $\mu_i:=\frac{1}{\mu(A_i)} \mu(A_i \cap \, \cdot \, )$ converges in the weak$^\ast$ topology to some positive linear functional $\varphi$ in the unit ball of $\rL^\infty(X,\mu)^*$. Now, let $B_i = A_i \cap Y$. We want to show that $\mu(B_i) > 0$ for $i$ large enough and that
\[ \frac{1}{\mu(B_i)} \left( \mu(B_i \cap \theta(Y \setminus B_i)) + \mu((Y \setminus B_i) \cap \theta(B_i)) \right) \to 0 \]
as $i \to \infty$ for all $\mu$-bounded $\theta \in [\mathcal{R}]$.

In order to show this, note that for every $\theta \in [\mathcal{R}]$ we have
\[  \mu(B_i \cap \theta(Y \setminus B_i)) + \mu((Y \setminus B_i) \cap \theta(B_i)) \leq \mu(A_i \triangle \theta(A_i)).\]
Hence, it will be enough to show that $\frac{\mu(B_i)}{\mu(A_i)}$ converges to some positive number, i.e.\ that $\varphi(\mathbf{1}_Y) >~0$. 

If $\mathcal{R}$ preserves $\mu$, we know by construction that $\varphi$ is invariant by $[\mathcal{R}]$. Hence we have $\varphi(\mathbf 1_{\theta(Y)})=\varphi(\mathbf 1_Y)$ for all $\theta \in [\mathcal{R}]$. Since we can find finitely many $\theta_1, \dots, \theta_n \in [\mathcal{R}]$ such that $\mathbf 1_X \leq \sum_{k=1}^n \mathbf 1_{\theta_n(Y)}$, we conclude that $1=\varphi(\mathbf 1_X) \leq n \, \varphi(\mathbf 1_Y)$. This shows that $\varphi(\mathbf 1_Y) > 0$.

If $\mathcal{R}$ is of type $\III_\lambda$, $0 < \lambda \leq 1$, we can construct, by Theorem \ref{bounded_typeIII}, a $\mu$-bounded element $\theta \in [\mathcal{R}]$ such that $\theta(X \setminus Y) \subset Y$. Since $\theta$ is $\mu$-bounded, we can find $C > 0$ such that $\mu  \leq C (\mu \circ \theta)$. Then, since the net $(A_i)_{i \in I}$ is almost invariant by $\theta$, we can see that $\varphi  \leq C (\varphi \circ \theta)$. Hence $\varphi(\mathbf 1_{X \setminus Y}) \leq C \, \varphi(\mathbf 1_Y)$. This again shows that $\varphi(\mathbf 1_Y) > 0$. 
\end{proof}

\begin{proof}[Proof of Theorem \ref{gap_strongly_ergodic}]
We suppose that such a family does not exist and we contradict the strong ergodicity of $\mathcal{R}$. Using the negation of Theorem \ref{general_gap} $(\rm i)$, there exists a net $(A_i)_{i \in I}$ of proper measurable subsets of $X$ such that 
\[ \frac{\mu(A_i \triangle \theta(A_i))}{\mu(A_i)(1-\mu(A_i))}  \rightarrow 0 \]
for all $\mu$-bounded elements $\theta \in [\mathcal{R}]$. Since the set of $\mu$-bounded elements is dense in $[\mathcal R]$ by Corollary \ref{cor-dense}, we infer that $\mu(A_i \triangle \theta(A_i)) \to 0$ for all $\theta \in [\mathcal{R}]$. Hence, by the strong ergodicity of $\mathcal{R}$, we know that $\mu(A_i)(1-\mu(A_i)) \to 0$. Then, by replacing the set $A_i$ by $X \setminus A_i$ for each $i$ if necessary, we can assume that $\mu(A_i) \to 0$. Hence $\mathcal{R}$ admits small almost invariant sets. Now, fix $\theta_1,\dots, \theta_n$ a finite set of $\mu$-bounded elements in $[\mathcal{R}]$ and take $\varepsilon > 0$. Consider the set $\Lambda$ of all elements $A \in \mathfrak{P}(X)$ such that $\mu(A) \leq \frac{1}{2}$ and
\[ \forall k \in \{1,\dots,n \}, \quad \mu(A \triangle \theta_k(A)) \leq \varepsilon \mu(A). \]
Since $\Lambda$ is closed in the complete boolean algebra $\mathfrak{P}(X)$, it is inductive as a poset. Let $A \in \Lambda$ be a maximal element. Suppose that $\mu(A) < \frac{1}{2}$. Let $Y=X \setminus A$. By Lemma \ref{corner}, we can find $B \subset Y$ such that $0 < \mu(B) < \frac{1}{2}- \mu(A)$ and
\[ \forall k \in \{1,\dots,n \}, \quad \mu(B \cap \theta_k(Y \setminus B))+\mu((Y \setminus B) \cap \theta_k(B))  \leq \varepsilon \mu(B). \]
Then for $C=A \cup B$, we easily check that
\[ \forall k \in \{1,\dots,n \}, \quad \mu(C \cap \theta_k(X \setminus C))+\mu((X \setminus C) \cap \theta_k(C))  \leq \varepsilon \mu(C) \]
and $\mu(C)=\mu(A)+\mu(B) < \frac{1}{2}$. Therefore $C \in \Lambda$ and this contradicts the maximality of $A$. Hence we must have $\mu(A)=\frac{1}{2}$. We have proved that for any finite family of $\mu$-bounded elements in $[\mathcal{R}]$ and any $\varepsilon > 0$, we can find $A \subset X$ such that $\mu(A)=\frac{1}{2}$ and
\[ \forall k \in \{1,\dots,n \}, \quad \mu(A \triangle \theta_k(A)) \leq \varepsilon. \]
This contradicts the strong ergodicity of $\mathcal{R}$.
\end{proof}

\begin{remark}
Let us point out that in the case when $\mathcal R$ is pmp and ergodic, the existence of small almost invariant sets for $\mathcal R$ does not guarantee the existence of an $I$-sequence for the full group $[\mathcal R]$ (in the sense of \cite[Section 2]{Sc80}), but rather an $I$-{\em net}. This is because the full group $[\mathcal R]$ is uncountable. For that reason, we cannot use \cite[Proposition 2.3]{Sc80} to prove Theorem \ref{gap_strongly_ergodic} even in the case when $\mathcal R$ is pmp. 
\end{remark}

\section{Strong ergodicity of skew-product equivalence relations}

Let $G$ be a second countable locally compact abelian group. Let $\cR$ be a nonsingular equivalence relation on a standard measure space $X$ and $\Omega  \in \rZ^{1}(\cR,G)$ a measurable $1$-cocycle. Our goal in this section is to give a criterion for the strong ergodicity of the skew-product equivalence relation $\cR \times_\Omega G$. For this, we let $\widehat{G}$ to be the Pontryagin dual of $G$ and we introduce the map $\widehat{\Omega} : \widehat{G} \rightarrow \rZ^{1}(\cR)$ defined by $\widehat{\Omega}(p)(x,y)=\langle p , \Omega(x,y) \rangle$ for a.e.\ $(x,y) \in \cR$, where $\langle \, \cdot \, , \cdot \, \rangle : \widehat{G} \times G \rightarrow \T$ is the duality pairing. Note that $\widehat{\Omega}$ is a continuous group homomorphism. We also introduce the continuous homomorphism $[ \widehat{\Omega}] : \widehat{G} \rightarrow \rH^{1}(\cR)$ which sends $p \in \widehat{G}$ to the cohomology class $[ \widehat{\Omega}(p)] \in \rH^{1}(\cR)$.

The main theorem of this section is the following result:
\begin{theorem}[Theorem \ref{thmstar:skew-product}] \label{strongly_ergodic_ext}
Let $\cR$ be an ergodic equivalence relation on a standard measure space $(X, \mu)$. Let $\Omega \in \rZ^1(\cR,G)$ be any measurable $1$-cocycle with values into a locally compact second countable abelian group $G$. Consider the following assertions:
\begin{itemize}
\item [$(\rm i)$] The skew-product equivalence relation $\cR \times_\Omega G$ is strongly ergodic.
\item [$(\rm ii)$] The equivalence relation $\cR$ is strongly ergodic and the map $[\widehat{\Omega}] : \widehat{G} \rightarrow \rH^1(\cR)$ is a homeomorphism onto its range.
\end{itemize}
Then $(\rm i) \Rightarrow (\rm ii)$ and if $G$ contains a lattice, we also have $(\rm ii) \Rightarrow (\rm i)$.
\end{theorem}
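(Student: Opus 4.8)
The plan is to treat the two implications separately, using the spectral gap characterization of Theorem~\ref{gap_strongly_ergodic} for the harder converse and explicit (almost) invariant functions for the direct implication. Throughout I will write $\chi_p(g) = \langle p, g\rangle$ for the character of $G$ attached to $p \in \widehat{G}$, and use that for $\theta \in [\cR]$ the formula $\hat\theta(x,g) = (\theta(x), \Omega(\theta(x),x)g)$ defines a lift of $\theta$ to $[\cR \times_\Omega G]$. For $(\rm i) \Rightarrow (\rm ii)$, I would first check that $\cR$ is strongly ergodic by pulling back along the projection $\pi : X \times G \to X$, which is a morphism from $\cR \times_\Omega G$ to $\cR$: any almost $\cR$-invariant sequence $(f_n)$ in $\rL^\infty(X)$ lifts to the almost $(\cR \times_\Omega G)$-invariant sequence $(f_n \circ \pi)$, whose triviality (read on a corner $X \times K$ with $K \subset G$ of finite Haar measure) forces triviality of $(f_n)$ by Fubini. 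The injectivity of $[\widehat{\Omega}]$ is then the key point: if $\widehat{\Omega}(p) = \partial u$ for some $u \in \rL^0(X,\T)$, then the cocycle identity shows that
\[ F(x,g) = \overline{u(x)}\,\chi_p(g) \]
is $(\cR \times_\Omega G)$-invariant of modulus one. Since $\chi_p$ is non-constant for $p \neq 0$, such an $F$ is non-constant, so ergodicity of the skew-product (a consequence of (i)) already forces $\ker[\widehat{\Omega}] = \{0\}$.

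For the topological part of (ii), namely that $[\widehat{\Omega}]$ is a homeomorphism onto its range, I would argue by contradiction from a failure of continuity of the inverse. Using that $\cR$ is strongly ergodic (so $\rH^1(\cR)$ is Polish and the quotient map $\rZ^1(\cR) \to \rH^1(\cR)$ lifts convergent sequences), such a failure produces a net $p_i \to \infty$ in $\widehat{G}$ with $[\widehat{\Omega}(p_i)] \to 1$, that is $\widehat{\Omega}(p_i) = (\partial u_i)\,\varepsilon_i$ with $\varepsilon_i \to 1$ in measure. The same computation as above, now carrying the coboundary defect $\varepsilon_i$, shows that $F_i(x,g) = \overline{u_i(x)}\,\chi_{p_i}(g)$ satisfies $|F_i(y,h) - F_i(x,g)| = |\varepsilon_i(y,x) - 1| \to 0$ along the relation, so $(F_i)$ is almost invariant; and since $p_i \to \infty$ the character $\chi_{p_i}$ cannot be approximated by constants on the fibres, so $(F_i)$ is non-trivial, contradicting the strong ergodicity of $\cR \times_\Omega G$.

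For $(\rm ii) \Rightarrow (\rm i)$, assuming $G$ contains a lattice $\Lambda$, the plan is to produce a spectral gap family for the skew-product and invoke Theorem~\ref{gap_strongly_ergodic}. After restricting to a finite-measure corner $X \times K$, I would assemble the family from two sources: the lifts $\hat\theta_1, \dots, \hat\theta_n$ of a spectral gap family for $\cR$ (these control the base direction, via strong ergodicity of $\cR$), together with finitely many $\mu$-bounded elements of $[\cR \times_\Omega G]$ that move the $G$-coordinate by the generators of $\Lambda$ (these control the fibre direction). The homeomorphism condition on $[\widehat{\Omega}]$ is exactly what rules out almost-invariant vectors whose fibre-spectrum is supported away from $0 \in \widehat{G}$: it says that the cocycles $\widehat{\Omega}(p)$ remain uniformly bounded away from the coboundaries as $p$ leaves a neighbourhood of $0$. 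Combined with the base spectral gap and the fact that the $p = 0$ spectral component of an almost-invariant function descends to an almost $\cR$-invariant function, hence is trivial, this should yield an inequality of the form in Theorem~\ref{general_gap}\,$(\rm ii')$.

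I expect the main obstacle to be precisely this fibre estimate in $(\rm ii) \Rightarrow (\rm i)$: converting the qualitative statement that $[\widehat{\Omega}]$ is a homeomorphism onto its range into a \emph{quantitative, uniform} spectral gap in the $\widehat{G}$-direction over the whole of the noncompact group $\widehat{G}$. This is where the lattice is essential: the discrete-by-compact structure of $\widehat{G}$ (dual to $\Lambda \subset G$ cocompact) reduces the control of the full fibre spectrum to the finitely many lattice generators and a compact quotient, so that a single finite family of $\mu$-bounded elements suffices. Packaging the localization near $0 \in \widehat{G}$ together with the ergodicity and strong ergodicity of the base into one spectral gap inequality is the technical heart of the argument, and is also the reason the statement is only proved under the lattice hypothesis.
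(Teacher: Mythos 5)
Your treatment of $(\rm i) \Rightarrow (\rm ii)$ is essentially the paper's argument: the function $F_i(x,g)=\overline{u_i(x)}\,\chi_{p_i}(g)$ is exactly the modulus-one function implementing the coboundary $\partial(u_i^{-1}\otimes p_i)=\iota\bigl((\partial u_i)^{-1}\widehat\Omega(p_i)\bigr)$ in $\rZ^1(\cR\times_\Omega G)$, and strong ergodicity of the skew-product (closedness of $\rB^1$) forces $p_i\to 1$ in $\widehat G$. Your sketch of the compact-fibre half of $(\rm ii)\Rightarrow(\rm i)$ also matches the paper: Fourier decomposition over the discrete dual $\Gamma=\widehat G$, triviality of the $\gamma=e$ component by the base spectral gap, and a uniform estimate for $\gamma\neq e$ coming from the fact that $\widehat\Omega(\Gamma\setminus\{e\})$ avoids a neighbourhood of the identity in $\rH^1(\cR)$ (this is Lemma \ref{gap_cocycles}; note that you also need the reduction to a corner, via Corollary \ref{corner_homology} and Proposition \ref{ergodic_ext}, to put $\cR$ in type ${\rm II_1}$ or ${\rm III}$ before Theorem \ref{gap_strongly_ergodic} and Lemma \ref{gap_cocycles} apply).

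The genuine gap is in the noncompact case. Your plan is to include in the spectral gap family ``finitely many $\mu$-bounded elements of $[\cR\times_\Omega G]$ that move the $G$-coordinate by the generators of $\Lambda$''. No such elements exist: if $((x,g),(x,\lambda g))\in\cR\times_\Omega G$ then $\Omega(x,x)=g(\lambda g)^{-1}$, i.e.\ $\lambda=e$. Translation by a nontrivial lattice element is an automorphism of the skew-product relation but is not inner; every element of $[\cR\times_\Omega G]$ displaces the fibre coordinate only by the values prescribed by $\Omega$. Consequently the fibre direction cannot be controlled from inside the full group, and a direct spectral gap inequality on $X\times G$ would have to handle the continuous spectrum of $\widehat G$ near $0$, where $\widehat\Omega(p)$ genuinely approaches the trivial cocycle and no uniform gap can hold. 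The paper circumvents this entirely: it passes to the compact quotient $K=G/H$, proves strong ergodicity of $\cR\times_\Theta K$ by the compact case, identifies the pullback $q^*(\cR\times_\Theta K)$ with $(\cR\times_\Theta K)\times\mathcal{S}_H$ (hence strongly ergodic), observes that this pullback is generated by $\cR\times_\Omega G$ together with the translation action $H\curvearrowright X\times G$, and then invokes Proposition \ref{amenable}: strong ergodicity descends from the relation generated by $\cR\times_\Omega G$ and an action of the \emph{amenable} group $H$ by automorphisms to $\cR\times_\Omega G$ itself. This amenability step, proved by an ultraproduct argument, is the actual role of the lattice hypothesis and is absent from your proposal.
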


We first prove the following proposition which deals with the ergodicity of the skew-product equivalence relation. It gives a hint for the proof of Theorem \ref{strongly_ergodic_ext}. Note that this proposition is closely related to \cite[Corollary 3.8]{Zi76}. Indeed, if $\Omega$ is cohomologous to some other $1$-cocycle $\Omega' \in \rZ^1(\cR,G)$ such that $\overline{\mathrm{Range}}(\Omega')$ is contained in some closed subgroup $H \subset G$ then $H^\perp \subset \ker ([\widehat{\Omega}])$.

\begin{proposition} \label{ergodic_ext}
Let $\cR$ be a nonsingular equivalence relation on a standard measure space $X$. Let $\Omega \in \rZ^1(\cR,G)$ be a measurable $1$-cocycle with values into a second countable locally compact abelian group $G$. Consider the following assertions:
\begin{itemize}
\item [$(\rm i)$] The skew-product equivalence relation $\cR \times_\Omega G$ is ergodic.
\item [$(\rm ii)$] The relation $\cR$ is ergodic and the map $[ \widehat{\Omega}] : \widehat{G} \rightarrow \rH^1(\cR)$ is injective.
\end{itemize}
Then $(\rm i) \Rightarrow (\rm ii)$ and if $G$ is compact, we also have $(\rm ii) \Rightarrow (\rm i)$.
\end{proposition}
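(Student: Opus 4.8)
The plan is to analyze the fixed-point algebra $\rL^\infty(X \times G)^{\cR \times_\Omega G}$ by exploiting the right-translation action of $G$ on $X \times G$, which one checks commutes with $\cR \times_\Omega G$, and then to decompose invariant functions into their Fourier components along $G$. For a character $p \in \widehat G$ and an invariant $F \in \rL^\infty(X \times G)$, I would set $f_p(x) = \int_G F(x,g)\,\overline{\langle p, g\rangle}\,\rd g \in \rL^\infty(X)$. Writing the invariance relation as $F(x, \Omega(x,y)h) = F(y,h)$ for a.e.\ $(x,y)\in\cR$ and $h \in G$, integrating both sides against $\overline{\langle p, \cdot\rangle}$, and performing the substitution $g = \Omega(x,y)h$ (using invariance of Haar measure together with $\overline{\langle p, \Omega(x,y)^{-1}\rangle} = \langle p, \Omega(x,y)\rangle = \widehat\Omega(p)(x,y)$) yields the key eigen-equation
\[ \widehat\Omega(p)(x,y)\, f_p(x) = f_p(y) \quad \text{for a.e.\ } (x,y) \in \cR. \]
This single relation drives both implications.

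For $(\rm i) \Rightarrow (\rm ii)$, which I expect to hold for arbitrary $G$, I would produce explicit invariant functions. Ergodicity of $\cR$ follows since any $f \in \rL^\infty(X)^\cR$ gives the invariant function $F(x,g) = f(x)$, which is forced to be constant. For injectivity of $[\widehat\Omega]$, if $p \in \widehat G$ satisfies $[\widehat\Omega(p)] = 0$, I write $\widehat\Omega(p) = \partial u$ with $u \in \rL^0(X, \T)$ and check directly that $F(x,g) = \overline{u(x)}\,\langle p, g\rangle$ is invariant; since $\langle p, \cdot\rangle$ is nonconstant precisely when $p \neq 0$, ergodicity of the skew-product forces $p = 0$, so $\ker[\widehat\Omega] = \{0\}$.

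For $(\rm ii) \Rightarrow (\rm i)$, where compactness of $G$ enters, I would pass to a probability measure in the class of $X$ and normalize Haar measure, so that $\{\langle p,\cdot\rangle\}_{p\in\widehat G}$ is an orthonormal basis of $\rL^2(G)$ and any $F \in \rL^\infty(X\times G) \subset \rL^2(X\times G)$ expands as $F = \sum_{p} f_p\,\langle p, \cdot\rangle$. The eigen-equation together with $|\widehat\Omega(p)| \equiv 1$ shows that $|f_p|$ is $\cR$-invariant, hence constant by ergodicity of $\cR$. If $f_p \neq 0$ for some $p \neq 0$, then after normalizing $f_p$ to a unimodular function one reads off $\widehat\Omega(p) \in \rB^1(\cR)$, contradicting injectivity of $[\widehat\Omega]$; thus $f_p = 0$ for all $p \neq 0$, while $f_0$ is $\cR$-invariant and therefore constant. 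Fourier inversion then gives that $F$ is constant, proving ergodicity of $\cR \times_\Omega G$.

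The main obstacle is this reverse implication: the clean dichotomy ``$f_p = 0$ or $\widehat\Omega(p)$ is a coboundary'' rests on the discreteness of $\widehat G$, which lets me treat each Fourier mode separately and reconstruct $F$ from its modes. For noncompact $G$ the dual has continuous spectrum and the expansion becomes a direct integral, so isolating individual modes is no longer possible; this is exactly why compactness of $G$ is imposed here, and why the full skew-product theorem later requires the separate lattice argument.
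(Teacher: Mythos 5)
Your proposal is correct and follows essentially the same route as the paper: ergodicity of $\cR$ via lifting/projection, injectivity of $[\widehat{\Omega}]$ by exhibiting the invariant unimodular function $\overline{u(x)}\langle p,g\rangle$ (equivalent to the paper's identity $\partial(1\otimes p)=\partial(u\otimes 1)$), and the converse by Fourier decomposition over the discrete dual $\widehat G$, with the eigen-equation $\widehat\Omega(p)f_p^{\ell}=f_p^{r}$ forcing $|f_p|$ constant and $\widehat\Omega(p)\in\rB^1(\cR)$ whenever $f_p\neq 0$. Your closing remark on why discreteness of $\widehat G$ is what makes the mode-by-mode argument work is also consistent with the paper's later treatment of the noncompact case.
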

\begin{proof}
$(\rm i) \Rightarrow (\rm ii)$. Suppose that $\cR \times_\Omega G$ is ergodic. Since $\cR \times_\Omega G$ clearly surjects onto $\cR$, we know that $\cR$ is also ergodic. Moreover, this surjection induces a natural inclusion $\iota : \rZ^1(\cR) \rightarrow \rZ^1(\cR \times_\Omega G)$. Let $p \in \widehat{G}$ and suppose that $\widehat{\Omega}(p)=\partial u$ for some $u \in \rL^0(X,\T)$. By definition of the skew-product construction, we have the following relation $\partial(1 \otimes p)=\iota(\widehat{\Omega}(p))$ where we view $1 \otimes p$ as an element of $\rL^0(X \times G,\T)$. Therefore we have $\partial(1 \otimes p)=\partial(u \otimes 1)$. Since $\cR \times_{\Omega} G$ is ergodic, this means that $1 \otimes p=\lambda(u \otimes 1)$ for some $\lambda \in \T$ and this easily implies that $p=1$. Hence $[ \widehat{\Omega}]$ is injective.

$(\rm ii) \Rightarrow (\rm i)$ when $G$ is compact. Let $\Gamma=\widehat{G}$ be the dual discrete group.  Let $f \in \rL^\infty(X \times G)^{\cR \times_\Omega G}$. Consider its Fourier decomposition given by the formal sum
\[ f= \sum_{ \gamma \in \Gamma} f_\gamma \otimes \gamma, \]
where $f_\gamma \in \rL^\infty(X)$ for all $\gamma \in \Gamma$. Then, since $f$ is invariant, we have
\[ 0=f^{\ell}-f^{r}=\sum_{\gamma \in \Gamma} \left( \widehat{\Omega} (\gamma) f_\gamma^{\ell} - f_\gamma^{r} \right) \otimes \gamma^{r}, \]
where we used the relation $(1 \otimes \gamma)^{\ell}=\iota(\widehat{\Omega}(\gamma)) (1 \otimes \gamma)^{r}$.

This shows that $f_\gamma^{r}= \widehat{\Omega} (\gamma) f_\gamma^{\ell}$ for all $\gamma \in \Gamma$. In particular, we have $|f_\gamma |^{\ell}=|f_\gamma |^{r}$, and since $\cR$ is ergodic, this means that $|f_\gamma |$ is a constant for all $\gamma \in \Gamma$. Suppose that for some $\gamma$, we have $f_\gamma \neq 0$. Let $u_\gamma$ be the polar part of $f_\gamma$. Then we have $\widehat{\Omega}(\gamma)=\overline{\partial u_\gamma}$. Since $[ \widehat{\Omega}]$ is injective, this implies that $\gamma=e$. We then have $f = f_e \otimes e$ and $\partial u_e=1$. Since $\cR$ is ergodic, this means that $u_e \in \T$ and therefore $f_e$ is constant. We conclude that $f$ is constant. Hence $\cR \times_\Omega G$ is ergodic. 
\end{proof}

For the proof of Theorem \ref{strongly_ergodic_ext}, we will need the following proposition. We prove it by using the ultraproduct construction.

\begin{proposition} \label{amenable}
Let $\cR$ be an ergodic equivalence relation on a standard measure space $X$. Let $\Gamma \curvearrowright X$ be a nonsingular action of an amenable countable discrete group by automorphisms of $\cR$. Then $\cR$ is strongly ergodic if and only if the equivalence relation generated by $\cR$ and $\cR(\Gamma \curvearrowright X)$ is strongly ergodic.
\end{proposition}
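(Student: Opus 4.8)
The plan is to run the whole argument inside an ultraproduct. Write $\mathcal S$ for the equivalence relation generated by $\cR$ and $\cR(\Gamma \curvearrowright X)$. Fix a probability measure $\mu$ in the measure class and a free ultrafilter $\omega$ on $\N$, and form the tracial ultrapower $M = \rL^\infty(X,\mu)^\omega$, a finite abelian von Neumann algebra with trace $\tau\big((f_n)_\omega\big) = \lim_\omega \int_X f_n \, \rd\mu$. Both $[\cR]$ and $\Gamma$ act on $M$ by automorphisms (the action is well defined because a nonsingular transformation sends sequences tending to $0$ in measure to sequences tending to $0$ in measure, by absolute continuity of the integral). The first step is the standard reformulation: a relation is strongly ergodic if and only if its asymptotic invariant algebra in $M$ is trivial; concretely $\cR$ is strongly ergodic iff $M^{[\cR]} = \C 1$, and likewise for $\mathcal S$. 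Using that $\mathcal S$ is generated by $\cR$ and $\Gamma$, together with a tail estimate decomposing any $\phi \in [\mathcal S]$ as a countable disjoint union of bounded-length words in $[\cR] \cup \Gamma$, one checks that $M^{[\mathcal S]} = (M^{[\cR]})^\Gamma$. The easy implication is then immediate and uses no amenability: if $\cR$ is strongly ergodic then $M^{[\cR]} = \C 1$, hence $(M^{[\cR]})^\Gamma = \C 1$ and $\mathcal S$ is strongly ergodic.

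For the converse I would argue by contradiction: assume $\mathcal S$ is strongly ergodic, i.e.\ $(M^{[\cR]})^\Gamma = \C 1$, but that $\cR$ is not, i.e.\ $A := M^{[\cR]} \neq \C 1$. Since each $\gamma$ is an automorphism of $\cR$, the group $\Gamma$ normalises $[\cR]$, so $A$ is a globally $\Gamma$-invariant abelian subalgebra of $M$. The naive idea of averaging a single nontrivial almost invariant set over a F\o{}lner sequence \emph{fails}, since the induced action $\Gamma \curvearrowright A$ could a priori be ergodic and the averages would then flatten to a scalar. The correct point is that amenability must enter only through the fact recalled in the introduction, that \emph{an amenable action on a diffuse standard probability space is never strongly ergodic}. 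To bring this to bear I would first show that $A$ is \emph{diffuse}: since $\cR$ is ergodic, every reduction $\cR_Y$ with $Y \neq \emptyset$ is again ergodic and, by the corner argument behind Lemma \ref{corner}, again fails to be strongly ergodic, so any nonzero projection of $A$ can be properly split and $A$ has no atoms. (One also checks, using the $[\cR]$-equivariant conditional expectation $M \to \rL^\infty(X)$ and ergodicity of $\cR$, that $\tau$ is $\Gamma$-invariant on $A$, so the action below is in fact measure preserving.)

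Granting diffuseness, I would pick a diffuse separable subalgebra $A_1 \subseteq A$ and set $A_0 = \bigvee_{\gamma \in \Gamma} \gamma(A_1)$, a separable, diffuse, $\Gamma$-invariant subalgebra; thus $\Gamma \curvearrowright A_0 \cong \rL^\infty(Y_0,\nu_0)$ is an action of an amenable group on a diffuse standard probability space. By the quoted fact this action is not strongly ergodic, so there are projections $e_j \in A_0 \subseteq A$ with $\tau(e_j)(1-\tau(e_j))$ bounded away from $0$ and $\|\gamma(e_j) - e_j\|_2 \to 0$ for every $\gamma \in \Gamma$. Each $e_j$ lies in $A = M^{[\cR]}$, so it is \emph{exactly} $[\cR]$-invariant in $M$ while being asymptotically $\Gamma$-invariant. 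The final step is a reindexing argument: writing $e_j = (1_{E_{j,n}})_\omega$ and enumerating countable generating sets of $[\cR]$ and of $\Gamma$, I would select for each $j$ an index $n_j$ (in a set belonging to $\omega$) so that the sets $g_j := 1_{E_{j,n_j}} \in \rL^\infty(X)$ satisfy $\|\theta(g_j) - g_j\|_2 \to 0$ for all $\theta \in [\cR]$, $\|\gamma(g_j) - g_j\|_2 \to 0$ for all $\gamma \in \Gamma$, and $\mu(E_{j,n_j}) \to \lim_\omega \tau(e_j) \in (0,1)$. By the tail decomposition of $[\mathcal S]$ again, $(g_j)_j$ is then a genuinely almost $\mathcal S$-invariant sequence that is nontrivial, contradicting the strong ergodicity of $\mathcal S$.

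The main obstacle, and the place where both ergodicity of $\cR$ and amenability of $\Gamma$ are genuinely used, is the middle step: one must recognise that amenability cannot be exploited by direct averaging but only through the failure of strong ergodicity of the induced action $\Gamma \curvearrowright A$, and this in turn forces one to prove that the asymptotic invariant algebra $A$ is \emph{diffuse} rather than merely nontrivial — otherwise a transitive $\Gamma$-action on finitely many atoms would have spectral gap and the whole scheme would collapse. The reindexing in the last step is then routine once the equicontinuity of $\theta \mapsto \theta(g)$ on norm-bounded sets is invoked to pass from a countable dense subset of $[\cR]$ to all of $[\cR]$.
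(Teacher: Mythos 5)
Your overall strategy is exactly the paper's: pass to the tracial ultraproduct $\rL^\infty(X)^\omega$, show that the asymptotically $[\cR]$-invariant subalgebra $A$ is diffuse, cut down to a separable diffuse $\Gamma$-invariant subalgebra, invoke the fact that a nonsingular action of an amenable group on a diffuse standard probability space is never strongly ergodic (\cite[Proposition 2.2]{Sc79}), and diagonally extract a nontrivial sequence of projections in $\rL^\infty(X)$ that is almost invariant under both $[\cR]$ and $\Gamma$. The easy implication, the reduction of almost $[\mathcal S]$-invariance to almost invariance under the two generating families, and the final reindexing are all fine and match the paper.

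The one step that does not go through as written is the one you yourself identify as the crux: the diffuseness of $A$. You propose to deduce it from ``the corner argument behind Lemma \ref{corner}.'' That lemma is proved under the extra hypothesis that $\cR$ preserves $\mu$ or is of type ${\rm III}$, which is not assumed in Proposition \ref{amenable}; more importantly, it concerns a \emph{fixed} corner $Y \subset X$, whereas a nonzero projection $p \in A$ is represented by a sequence of subsets $E_n$ varying with $n$. Knowing that every reduction $\cR_Y$ fails to be strongly ergodic does not by itself produce a proper nonzero subprojection of $p$ inside $A$, so the sentence ``any nonzero projection of $A$ can be properly split'' is not justified by what precedes it. The paper fills this in by following \cite[Corollary 3.8]{Co74}: write $p=(p_n)^\omega$ with each $p_n$ a projection (\cite[Proposition 1.1.3 (a)]{Co75a}) and $\alpha=\tau^\omega(p)\in(0,1)$; since $\cR$ is ergodic and the $p_n$ are asymptotically $[\cR]$-invariant, $p_n \to \alpha 1$ \emph{weakly}, and one can then choose indices $k_n$ so that $q=(p_n p_{k_n})^\omega$ lies in $A$, satisfies $q\le p$, and has $\tau^\omega(q)=\alpha^2<\alpha$, so $p$ is not minimal. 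This weak-convergence argument is where the ergodicity of $\cR$ genuinely enters, and it is the correct replacement for your corner argument; with that substitution your proof coincides with the paper's. (Your parenthetical claim that $\tau^\omega$ is $\Gamma$-invariant on $A$ also follows from this same weak convergence, but it is not needed: Schmidt's result applies to nonsingular amenable actions.)
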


\begin{proof}
Clearly, if $\cR$ is strongly ergodic then a fortiori the equivalence relation generated by $\cR$ and $\cR(\Gamma \curvearrowright X)$ is strongly ergodic. Now, suppose that $\cR$ is not strongly ergodic. Choose $\mu$ a probability measure in the measure class of $X$ and put $\tau(f) = \int_{X} f \, \rd \mu$ for every $f \in \rL^\infty(X)$. Fix a nonprincipal ultrafilter $\omega \in \beta(\N) \setminus \N$ and consider the ultraproduct von Neumann algebra $\rL^\infty(X)^\omega$ together with its canonical faithful normal trace $\tau^\omega$ defined by $\tau^\omega((f_n)^\omega) = \lim_{n \to \omega} \tau(f_n)$ for every $(f_n)^\omega \in \rL^\infty(X)^\omega$. Define the (abelian) von Neumann subalgebra $\mathcal A \subset \rL^\infty(X)^\omega$ by 
$$\mathcal A:= \left\{ (f_n)^\omega \in \rL^\infty(X)^\omega : \lim_{n \to \omega} \|\theta(f_n) - f_n\|_2 = 0, \forall \theta \in [\cR] \right \}.$$
By assumption, we have $\mathcal A \neq \C1$. We claim that $\mathcal A$ is diffuse. Indeed, following the proof of \cite[Corollary 3.8]{Co74}, let $p \in \mathcal A$ be any projection such that $p \neq 0, 1$. Write $p = (p_n)^\omega$ and $\alpha = \tau^\omega(p) \in (0, 1)$. By \cite[Proposition 1.1.3 (a)]{Co75a}, we may further assume that $p_n \in \rL^\infty(X)$ is a projection for every $n \in \N$. Since $\cR$ is ergodic and since $\lim_{n \to \omega} \|\theta(p_n) - p_n\|_2 = 0$, it follows that $p_n \to \alpha 1$ weakly as $n \to \omega$. Fix a countable dense subset $\{\theta_n : n \in \N\} \subset [\cR]$. For every $n \in \N$, there exists $k_n \in \N$ large enough such that 
$$\forall 0 \leq j \leq n, \quad \|\theta_j(p_{k_n}) - p_{k_n}\|_2 \leq \frac{1}{n + 1} \quad \text{and} \quad |\tau(p_j p_{k_n}) - \alpha \tau(p_j)| \leq \frac{1}{n + 1}.$$
Then $q = (p_n p_{k_n})^\omega \in \mathcal A$, $q \leq p$ and $\tau^\omega(q) = \alpha^2$. This shows that $\mathcal A$ has no minimal projection and hence is diffuse. 

Observe that the natural action $\Gamma \curvearrowright \rL^\infty(X)^\omega$ defined by $\gamma \cdot (f_n)^\omega = (\gamma(f_n))^\omega$ for every $(f_n)^\omega \in \rL^\infty(X)^\omega$ and every $\gamma \in \Gamma$ leaves the von Neumann subalgebra $\mathcal A \subset \rL^\infty(X)^\omega$ globally invariant. Since $\mathcal A$ is diffuse and $\Gamma$ is countable, we may find a diffuse von Neumann subalgebra $\mathcal D \subset \mathcal A$ with separable predual that is globally invariant under the action $\Gamma \curvearrowright \mathcal A$. Write $\mathcal D = \rL^\infty(Y, \eta)$ where $(Y, \eta)$ is a diffuse standard probability space. Since $\Gamma$ is amenable and $(Y, \eta)$ is diffuse, the action $\Gamma \curvearrowright (Y, \eta)$ is not strongly ergodic (see e.g.\ \cite[Proposition 2.2]{Sc79}). Thus, there exists a $\Gamma$-almost invariant sequence of measurable subsets $U_k \subset Y$ such that $\inf_{k \in \N} \eta(U_k)(1 - \eta(U_k)) > 0$. For every $k \in \N$, write $\rL^\infty(Y, \eta) \ni \mathbf 1_{U_k}  = (p^k_n)^\omega \in \mathcal D \subset \mathcal A$ where $(p^k_n)_n$ is a sequence of projections in $\rL^\infty(X)$. By diagonal extraction, we may find a sequence of projections $(q_m)_m$ in $\rL^\infty(X)$ of the form $q_m = p^{k_m}_{n_m}$ such that $\inf_{m \in \N} \tau(q_m)(1 - \tau(q_m)) > 0$, $\lim_m\|\gamma(q_m) - q_m \|_2 = 0$ for every $\gamma \in \Gamma$ and $\lim_m \|\theta(q_m) - q_m\|_2 = 0$ for every $\theta \in [\cR]$. For every $m \in \N$, write $q_m = \mathbf 1_{V_m}$ where $V_m \subset X $ is a measurable subset. Then $(V_m)_m$ is a nontrivial sequence of measurable subsets that are almost invariant under both $\cR$ and $\cR(\Gamma \curvearrowright X)$. Therefore $\cR$ and $\cR(\Gamma \curvearrowright X)$ generate an equivalence relation that is not strongly ergodic.
\end{proof}

The next lemma will be the crucial ingredient in the proof of Theorem \ref{strongly_ergodic_ext}. It gives a spectral gap control for the topology of $\rH^{1}(\cR)$ when $\cR$ is a strongly ergodic equivalence relation. Of course, it relies on Theorem \ref{gap_strongly_ergodic}. 

If $c \in \rZ^{1}(\cR)$ and $\theta \in [\cR]$, we introduce the function $c(\theta) \in \rL^{0}(X,\T)$ defined by $c(\theta)(x)=c(\theta^{-1}(x),x)$ for a.e.\ $x \in X$. Note that a sequence $c_n \in \rZ^{1}(\cR)$ converges to $\mathbf 1$ in the measure topology if and only if $c_n(\theta)$ converges to $\mathbf 1$ in the measure topology for all $\theta \in [\cR]$.

\begin{lemma} \label{gap_cocycles}
Let $\cR$ be a strongly ergodic equivalence relation on a standard probability space $(X,\mu)$. Assume that $\cR$ either preserves $\mu$ or is of type $\III$. Let $\mathcal{V}$ be a neighborhood of the identity in $\rH^1(\cR)$. Then there exist a constant $\kappa > 0$ and a finite family of $\mu$-bounded elements $\theta_1,\dots,\theta_n \in [ \cR]$ such that 
\[ \forall f \in \rL^2(X,\mu), \; \|f -\mu(f)\|_2 \leq \kappa \sum_{k=1}^n \| f-\theta_k(f) \|_2 \] 
and such that for all cocycles $c \in \rZ^1(\cR)$ with $[c] \notin \mathcal{V}$, we have
\[ \forall f \in \rL^2(X,\mu), \; \|f \|_2 \leq \kappa \sum_{k=1}^n \| c(\theta_k)f-\theta_k(f) \|_2. \] 
\end{lemma}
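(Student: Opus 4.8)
The first inequality is nothing but the conclusion of Theorem~\ref{gap_strongly_ergodic}: it supplies a finite family $\Theta_0=(\theta_1,\dots,\theta_{n_0})$ of $\mu$-bounded elements of $[\cR]$ together with a constant $\kappa_0$ realizing the untwisted spectral gap $\|f-\mu(f)\|_2\le\kappa_0\sum_k\|f-\theta_k(f)\|_2$ for all $f\in\rL^2(X,\mu)$. Since enlarging the family and increasing the constant only weakens this inequality, the real content is the second, \emph{uniform} inequality, and the whole point is to produce a single finite family containing $\Theta_0$ and a single $\kappa$ for which it also holds. I would argue by contradiction. If no such pair existed, then for every finite $\mu$-bounded family $\Theta\supseteq\Theta_0$ and every $\kappa\ge\kappa_0$ the first inequality would already hold, so the second would have to fail.

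Fix a countable dense set $\{\psi_j\}$ of $\mu$-bounded elements (dense by Corollary~\ref{cor-dense}) and test this failure against $\Theta_m:=\Theta_0\cup\{\psi_1,\dots,\psi_m\}$ with $\kappa=m$. This extracts a sequence $c_m\in\rZ^1(\cR)$ with $[c_m]\notin\mathcal V$ and unit vectors $f_m\in\rL^2(X,\mu)$ with $\sum_{\theta\in\Theta_m}\|c_m(\theta)f_m-\theta(f_m)\|_2<1/m$; in particular $\|c_m(\theta)f_m-\theta(f_m)\|_2\to0$ for every $\mu$-bounded $\theta$. The heart of the argument is an \emph{untwisting} step. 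Taking moduli and using $|c_m(\theta)|=1$, the untwisted gap applied to $|f_m|$ gives
\[
\big\|\,|f_m|-\mu(|f_m|)\,\big\|_2\le\kappa_0\sum_{k}\big\|\,|\theta_k(f_m)|-|f_m|\,\big\|_2\le\kappa_0\sum_k\|\theta_k(f_m)-c_m(\theta_k)f_m\|_2\longrightarrow0,
\]
using the pointwise bound $\big||a|-|b|\big|\le|a-b|$. Since $\|f_m\|_2=1$, this forces $\mu(|f_m|)\to1$, whence $|f_m|\to1$ in $\rL^2$.

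Writing $f_m=|f_m|\,v_m$ with $v_m\in\rL^0(X,\T)$ the phase of $f_m$ (set to $1$ where $f_m$ vanishes), the estimate $\|f_m-v_m\|_2=\big\|\,|f_m|-1\,\big\|_2\to0$ lets me replace $f_m$ by $v_m$ in the almost-invariance relation: for each $\mu$-bounded $\theta$ one obtains $\theta(v_m)-c_m(\theta)v_m\to0$ in $\rL^2$, that is, multiplying by $\overline{v_m}$ and recalling $(\partial v_m)(\theta)=\theta(v_m)\,\overline{v_m}$, that $c_m(\theta)\,\overline{(\partial v_m)(\theta)}\to1$ in measure. Thus the coboundary-corrected cocycles $c_m(\partial v_m)^{-1}$ satisfy $\big(c_m(\partial v_m)^{-1}\big)(\theta)\to1$ for every $\theta$ in the dense set $\{\psi_j\}$; since $\theta\mapsto d(\theta)$ is continuous uniformly over all $d\in\rZ^1(\cR)$ (if $\theta'=\theta$ on a set of nearly full measure then $d(\theta')=d(\theta)$ there, regardless of $d$), this convergence passes to all $\theta\in[\cR]$, and hence, by the criterion recalled just before the lemma, $c_m(\partial v_m)^{-1}\to\mathbf 1$ in $\rZ^1(\cR)$. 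Applying the continuous quotient map $\rZ^1(\cR)\to\rH^1(\cR)$ and using $[c_m]=[c_m(\partial v_m)^{-1}]$ yields $[c_m]\to1$, which contradicts $[c_m]\notin\mathcal V$ for the open neighbourhood $\mathcal V$ of the identity. I expect the main obstacle to be exactly this last measure-theoretic passage—upgrading convergence of $c_m(\theta)$ on a countable dense family to convergence in the Polish topology of $\rH^1(\cR)$—together with the careful handling of the phase $v_m$ on the set where $f_m$ degenerates.
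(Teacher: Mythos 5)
Your proof is correct and follows essentially the same strategy as the paper's: argue by contradiction, apply the untwisted spectral gap to $|f_m|$ to deduce $|f_m|\to\mathbf 1$ in $\rL^2$, extract the phase $v_m$, and correct $c_m$ by the coboundary $\partial v_m$ to contradict $[c_m]\notin\mathcal V$. The only organizational difference is in how $\mathcal V$ enters: the paper chooses the finite family $\theta_1,\dots,\theta_n$ at the outset so that $[c]\notin\mathcal V$ already forces $\sum_k\|c(\theta_k)-\mathbf 1\|_2\geq 1/\kappa$ (using that basic neighbourhoods of $\rB^1(\cR)$ in $\rZ^1(\cR)$ are detected by finitely many $\mu$-bounded elements of the full group), which makes the contradiction immediate, whereas you run a diagonal extraction over a countable dense family and then upgrade pointwise convergence of the corrected cocycles to convergence in $\rZ^1(\cR)$ via the equicontinuity of $\theta\mapsto d(\theta)$; both routes are valid.
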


\begin{proof}
Using Theorem \ref{gap_strongly_ergodic}, we may choose a finite family $\theta_1,\dots, \theta_n$ of $\mu$-bounded elements in $[\cR]$ with a constant $\kappa > 0$ such that for all $f \in \rL^2(X,\mu)$ we have
\[ \|f-\mu(f) \|_2 \leq \kappa \sum_{k=1}^n \| \theta_k(f)-f \|_2 \]
and for all $c \in \rZ^1(\cR)$ with $[c] \notin \mathcal{V}$ we have
\[ \frac{1}{\kappa} \leq \sum_{k=1}^n \| c(\theta_k)-\mathbf 1 \|_2. \]
Now, suppose by contradiction that there exists a sequence $(f_i)_{ i \in \N}$ in $\rL^2(X,\mu)$ with $\|f_i \|_2=1$ for all $i \in \N$  and a sequence of measurable $1$-cocycles $c_i \in \rZ^1(\cR)$ with $[c_i] \notin \mathcal{V}$ for all $i \in \N$  such that $\lim_i \| c(\theta_k)f_i-\theta_k(f_i) \|_2 =0$ for all $k\in \{1,\dots, n\}$. Then in particular, we have $\lim_i \| |f_i|-\theta_k(|f_i|) \|_2=0$ for all $k\in \{1,\dots, n\}$. Therefore, we have $\lim_i \| |f_i |- \mu(|f_i|) \|_2=0$. Since $\|f_i \|_2=1$ for all $i \in \N$, this means that $\lim_i \| |f_i|-\mathbf 1 \|_2=0$. Thus, we can find a sequence $u_i \in \rL^0(X,\T)$ such that $\lim_i \| f_i-u_i \|_2=0$. Since $\theta_k$ is $\mu$-bounded for all $k\in \{1,\dots, n\}$, we then obtain
\[ \lim_i \| c_i(\theta_k)u_i-\theta_k(u_i) \|_2=0 \]
for all $k\in \{1,\dots, n\}$. Let $\partial u_i \in \rB^1(\cR)$ be the measurable $1$-coboundary associated with $u_i$ and let $c'_i= (\partial u_i)^{-1}c_i \in \rZ^1(\cR)$. Then we have
\[ \lim_i \| c'_i(\theta_k)- \mathbf 1\|_2=0. \]
But, since $[c'_i]=[c_i] \notin \mathcal{V}$, we also have
\[ \sum_{k=1}^n \| c'_i(\theta_k)- \mathbf 1 \|_2 \geq \frac{1}{\kappa} \]
for all $i \in \N$ and this is a contradiction.
\end{proof}

\begin{proof}[Proof of Theorem \ref{strongly_ergodic_ext}]
$(\rm i) \Rightarrow (\rm ii)$. Suppose that $\cR \times_\Omega G$ is strongly ergodic. Since $\cR \times_\Omega G$ clearly surjects onto $\cR$, we know that $\cR$ is also strongly ergodic. Moreover, we have a natural embedding of topological groups $\iota : \rZ^1(\cR) \rightarrow \rZ^1(\cR \times_\Omega G)$.  Now we have to show that if we have a sequence $(p_i)_{i \in \N}$ of elements in $\widehat{G}$ and a sequence $(u_i)_{i \in \N}$ of elements in $\rL^0(X,\T)$ such that $\lim_i (\partial u_i)^{-1} \widehat{\Omega}(p_i)=\mathbf{1}$ in $\rZ^1(\cR)$ then $\lim_i p_i =1$ in $\widehat{G}$. By definition of the skew-product construction, it is easy to check that $\iota((\partial u_i)^{-1} \widehat{\Omega}(p_i))=\partial( u_i^{-1} \otimes p_i)$ where we view $u_i^{-1} \otimes p_i$ as an element of $\rL^0(X \times G, \T)$. Since $\iota$ is an embedding of topological groups, we have $\lim_i \partial( u_i^{-1} \otimes p_i)=\mathbf{1}$ in $\rZ^1(\cR \times_\Omega G)$. Since $\cR \times_\Omega G$ is strongly ergodic, this means that there exists a sequence $z_i \in \T$ such that $\lim_i z_i(u_i^{-1} \otimes p_i)=\mathbf{1}$ and this easily implies that $\lim_i p_i=1$ in $\widehat{G}$.

Now, we prove $(\rm ii) \Rightarrow (\rm i)$. First we deal with the case where $G$ is compact because we will need it for the more general case where $G$ contains a lattice.

$(\rm ii) \Rightarrow (\rm i)$ when $G$ is compact. Assume that $(\rm ii)$ holds. Then, thanks to Corollary \ref{corner_homology}, we know that $(\rm ii)$ also holds for the relation $\cR_Y$ and the cocycle $\Omega |_{ \cR_Y}$ where $Y \subset X$ is any nonzero measurable subset. Moreover, by Proposition \ref{ergodic_ext}, we know that $\cR \times_\Omega G$ is ergodic which means that $\cR \times_\Omega G$ is strongly ergodic if and only if $(\cR \times_\Omega G)|_{ Y \times G}$ is strongly ergodic (again by Corollary \ref{corner_homology}). This shows that it is enough to prove the desired result for $\cR_{Y}$ instead of $\cR$. In particular, we can assume that $\cR$ is either of type $\II_1$ (in which case we choose $\mu$ to be the unique invariant probability measure) or of type $\III$. Now, let $\Gamma =\widehat{G}$ be the dual discrete group.  Then for every $f \in \rL^2(X,\mu) \otimes \rL^2(G,\nu)$ we have
\[ f^{\ell}-f^r= \sum_{\gamma \in \Gamma } \left( \widehat{\Omega} (\gamma)  f_\gamma^{\ell}-f_\gamma^{r} \right) \otimes \gamma^{r}. \]
Therefore, by restricting to the graph of an element $\theta \in [ \cR] \subset [\cR \times_\Omega G]$ we obtain
\[ \| \theta(f)-f \|_2^2 = \sum_{ \gamma \in \Gamma } \| \theta(f_\gamma)- \overline{\widehat{\Omega}(\gamma)(\theta)} f_\gamma \|_2^2. \]
By assumption, we can take $\mathcal{V}$ to be a neighborhood of the identity in $\rH^1(\cR)$ such that $\pi_\cR^{-1}(\mathcal{V}) \cap \widehat{\Omega}(\Gamma)=\{1\}$. Take $\theta_1, \dots, \theta_n \in [\cR]$ and $\kappa > 0$ as in Lemma \ref{gap_cocycles}. Since we have
\[ \sum_{k=1}^n \| \theta_k(f)-f \|_2^2=\sum_{k=1}^n \| \theta_k(f_e)-f_e \|_2^2+ \sum_{ \gamma \neq e } \sum_{k=1}^n \| \theta_k(f_\gamma)- \overline{\widehat{\Omega}(\gamma)(\theta_k)}f_\gamma \|_2^2, \]
we obtain
\[ \kappa \sum_{k=1}^n \| \theta_k(f)-f \|_2^2 \geq  \|f_e-\mu(f_e) \|_2^2 + \sum_{ \gamma \neq e } \|f_\gamma \|_2^2 =\|f- (\mu \otimes \nu)(f) \|_2^2. \]
This shows that $\theta_1,\dots,\theta_k$ has spectral gap in $\cR \times_\Omega G$ and in particular that $\cR \times_\Omega G$ is strongly ergodic.

$(\rm ii) \Rightarrow (\rm i)$ when $G$ contains a lattice. By assumption, $G$ contains a discrete subgroup $H$ such that the quotient group $K=G/H$ is compact. Let $\Theta \in \rZ^1(\cR,K)$ be the measurable $1$-cocycle obtained by composing $\Omega$ with the quotient map $G \rightarrow K$. Then $\widehat{\Theta}=\widehat{\Omega}|_{\widehat{K}}$ so that $\Theta$ also satisfies the assumption $(\rm ii)$. Since $K$ is compact, we can apply the first step and therefore we know that $\cR \times_{\Theta} K$ is strongly ergodic. Now, let $q : X \times G \rightarrow X \times K$ be the quotient map and consider the lifted equivalence relation $\mathcal{Q}=q^{*}(\cR \times_{\Theta} K)$. Then $\mathcal{Q}$ is strongly ergodic because it is isomorphic to the product equivalence relation $(\cR \times_{\Theta} K) \times \mathcal{S}_H$ on $(X \times K) \times H$ where $\mathcal{S}_H$ is the type $\I$ transitive equivalence relation on $H$. Moreover, $\mathcal{Q}$ is generated by $\cR \times_\Omega G$ and the orbit equivalence relation $\cR(H \curvearrowright X \times G)$ of the translation action $H \curvearrowright X \times G$ on the second coordinate. By Proposition \ref{amenable}, we conclude that $\cR \times_\Omega G$ is strongly ergodic.
\end{proof}

A direct application of Theorem \ref{strongly_ergodic_ext} gives the following characterization of the strong ergodicity of the Maharam extension.

\begin{corollary}[Corollary \ref{corstar:maharam}]
Let $\cR$ be a type $\III_1$ ergodic equivalence relation on a standard measure space $X$. Then the Maharam extension $\core(\cR)$ is strongly ergodic if and only if $\cR$ is strongly ergodic and $\tau(\cR)$ is the usual topology on $\R$.
\end{corollary}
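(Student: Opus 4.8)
The plan is to deduce the statement directly from Theorem \ref{strongly_ergodic_ext}, applied to the locally compact second countable abelian group $G = \R^+_0$ and to the Radon--Nikodym cocycle $\Omega = \delta_\mu \in \rZ^1(\cR, \R^+_0)$, so that $\cR \times_\Omega G$ is precisely the Maharam extension $\core(\cR)$. The first thing I would check is that $G = \R^+_0$ contains a lattice: under the isomorphism $\log \colon \R^+_0 \to (\R,+)$ the subgroup generated by a single element $\lambda \neq 1$ is discrete and cocompact, so $G$ does contain a lattice and \emph{both} implications of Theorem \ref{strongly_ergodic_ext} are available. Next I would identify the dual side: the pairing $\langle s, t\rangle = t^{\mathbf{i}s}$ realizes $\widehat{G} \cong \R$, under which $\widehat{\Omega}(s) = \delta_\mu^{\mathbf{i}s}$ and the homomorphism $[\widehat{\Omega}] \colon \widehat G \to \rH^1(\cR)$ becomes exactly the map $\R \to \rH^1(\cR) \colon s \mapsto [\delta_\mu^{\mathbf{i}s}]$ appearing in the definition of $\tau(\cR)$. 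With these identifications, Theorem \ref{strongly_ergodic_ext} reads: $\core(\cR)$ is strongly ergodic if and only if $\cR$ is strongly ergodic and $[\widehat{\Omega}]$ is a homeomorphism onto its range.

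The only remaining work is then a purely topological translation: assuming $\cR$ strongly ergodic (so that $\rH^1(\cR)$ is a Hausdorff Polish group by \cite[Proposition 2.3]{Sc79}), I must show that the continuous homomorphism $\phi := [\widehat{\Omega}] \colon \R \to \rH^1(\cR)$ is a homeomorphism onto its range if and only if $\tau(\cR)$ is the usual topology on $\R$. By definition $\tau(\cR)$ is the initial topology induced by $\phi$; since $\phi$ is continuous for the usual topology, $\tau(\cR)$ is always coarser than the usual topology. If $\tau(\cR)$ equals the usual (hence Hausdorff) topology, then $\phi$ must be injective --- the initial topology pulled back from the Hausdorff space $\rH^1(\cR)$ is Hausdorff exactly when $\phi$ is injective --- and the initial topology always makes an injective map a homeomorphism onto its image; combining these, $\phi$ is a homeomorphism onto its range for the usual topology. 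Conversely, if $\phi$ is a homeomorphism onto its range, then $\phi$ is injective, so both the usual topology and $\tau(\cR)$ turn $\phi$ into a homeomorphism onto $\phi(\R)$; composing, the identity map from $(\R, \text{usual})$ to $(\R, \tau(\cR))$ is a homeomorphism, i.e.\ the two topologies coincide.

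Combining the two steps yields the corollary: in the forward direction, strong ergodicity of $\core(\cR)$ forces, via Theorem \ref{strongly_ergodic_ext}, strong ergodicity of $\cR$ (so that $\tau(\cR)$ is defined) together with the homeomorphism property, which the lemma converts into $\tau(\cR)$ being the usual topology; the converse runs the same chain backwards, using the lattice hypothesis to invoke the implication $(\rm ii) \Rightarrow (\rm i)$ of the theorem. I expect the main (though modest) obstacle to be the careful bookkeeping of this topological translation --- in particular verifying the injectivity of $\phi$ and the coincidence of the subspace topology on $\phi(\R)$ with the initial topology --- rather than any deep new input, since all the analytic content has already been absorbed into Theorem \ref{strongly_ergodic_ext}.
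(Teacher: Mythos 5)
Your proposal is correct and follows exactly the paper's route: the paper derives the corollary as "a direct application of Theorem \ref{strongly_ergodic_ext}" with $G = \R^+_0$ and $\Omega = \delta_\mu$, which is precisely your argument. The details you supply (the lattice $\lambda^{\Z} \subset \R^+_0$, the identification $\widehat{\R^+_0} \cong \R$ sending $[\widehat{\Omega}]$ to $t \mapsto [\delta_\mu^{\mathbf{i}t}]$, and the translation between ``homeomorphism onto its range'' and ``$\tau(\cR)$ is the usual topology'' via the Hausdorffness of $\rH^1(\cR)$) are all accurate and are exactly what the paper leaves implicit.
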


\section{Almost periodic equivalence relations}
Let $\mathcal R$ be an ergodic equivalence relation on a standard measure space $X$. We say that a measure $\mu \in \mathcal{M}(X)$ is {\em almost periodic} for $\mathcal R$ if $\delta_\mu$ is a step function. We say that $\mu$ is $\Lambda$-almost periodic for a countable subgroup $\Lambda < \R^+_0$ if moreover $\mathrm{Range}(\delta_\mu) \subset \Lambda$. It is easy to check that a measure $\mu$ is almost periodic if and only if the image of the homomorphism 
$$\widehat{\delta_\mu} : \R \to \rZ^{1}(\cR) : t \mapsto \delta_\mu^{\mathbf{i}t }$$ has a compact closure. It is $\Lambda$-almost periodic if and only if $\widehat{\delta_\mu}$ extends to a continuous homomorphism from the compact group $\widehat{\Lambda}$ to $\rZ^{1}(\cR)$.

We denote by $\mathcal M_{\ap}(X, \mathcal R)$ the set of all almost periodic measures on $X$. We say that $\cR$ is almost periodic if $\mathcal M_{\ap}(X, \mathcal R) \neq \emptyset$.

By analogy with \cite{Co74}, we introduce the Sd invariant for ergodic equivalence relations. It is a discrete version of the S invariant.

\begin{definition}
Let  $\mathcal R$ be an almost periodic ergodic equivalence relation on $X$. Define 
$$\Sd(\mathcal R) = \bigcap_{\mu \in \mathcal M_{\ap}(X, \mathcal R)} \Range(\delta_\mu).$$
\end{definition}

Observe that when $\mathcal R$ is ergodic and of type ${\rm I}$ or ${\rm II}$, then $\mathcal R$ is almost periodic and $\Sd(\mathcal R) = \mathbf 1$. More generally, we show that $\Sd(\mathcal R) < \R^+_0$ is a countable subgroup.

\begin{proposition}\label{proposition: group}
Let  $\mathcal R$ be an almost periodic ergodic equivalence relation on $X$. Then $\Sd(\mathcal R) < \R^+_0$ is a countable subgroup. 
\end{proposition}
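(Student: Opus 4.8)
For any $\mu\in\mathcal M_{\ap}(X,\cR)$ the modulus $\delta_\mu$ is a step function, so $\Range(\delta_\mu)$ is a countable subset of $\R^+_0$; since $\Sd(\cR)\subseteq\Range(\delta_\mu)$, the set $\Sd(\cR)$ is countable. The diagonal of $\cR$ is non-null and $\delta_\mu\equiv 1$ on it, so $1\in\Range(\delta_\mu)$ for every $\mu$, whence $1\in\Sd(\cR)$. Finally, the flip $(x,y)\mapsto(y,x)$ preserves the canonical measure class of $\cR$ and carries $\delta_\mu=\frac{\rd\mu_\ell}{\rd\mu_r}$ to $\delta_\mu^{-1}$, so $\Range(\delta_\mu)=\Range(\delta_\mu)^{-1}$ and hence $\Sd(\cR)=\Sd(\cR)^{-1}$.

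\textbf{Closure under multiplication: the strategy.} Fix $\gamma_1,\gamma_2\in\Sd(\cR)$ and an almost periodic measure $\mu$; I must produce, on a positive measure set, an element of $[[\cR]]$ on which $\delta_\mu$ is constantly $\gamma_1\gamma_2$. Since $\gamma_1\in\Range(\delta_\mu)$, choose $\phi\in[[\cR]]$ with $\dom(\phi)=P$, $\ran(\phi)=Y$ of positive measure and $\delta_\mu(x,\phi(x))=\gamma_1$ for a.e.\ $x\in P$. The naive idea of postcomposing $\phi$ with a partial isomorphism carrying $\delta_\mu$-value $\gamma_2$ fails because such an isomorphism is only guaranteed somewhere in $X$, its domain need not meet $Y$, and realigning it through the full relation $\cR$ destroys the prescribed value. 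The device to overcome this is the measure preserving subrelation $\cR_\mu=\ker(\delta_\mu)$, whose partial isomorphisms transport points while leaving $\delta_\mu$ unchanged. Let $Z'$ be the $\cR_\mu$-saturation of $Y$, which is $\cR_\mu$-invariant of positive measure. Realize $\gamma_2$ by some $\psi\in[[\cR]]$ whose domain lies in $Z'$, pick $\rho\in[[\cR_\mu]]$ with $\ran(\rho)\subseteq Y$ and $\dom(\rho)\subseteq\dom(\psi)$, and set $\sigma=\psi\circ\rho^{-1}$; since $\rho$ preserves $\delta_\mu$, the cocycle identity gives $\delta_\mu(y,\sigma(y))=\gamma_2$ for a.e.\ $y\in\ran(\rho)\subseteq Y$. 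Postcomposing with $\phi$ on the positive measure set $\phi^{-1}(\ran(\rho))$ and using the cocycle identity once more yields $\delta_\mu\big(x,(\sigma\circ\phi)(x)\big)=\gamma_1\gamma_2$, so $\gamma_1\gamma_2\in\Range(\delta_\mu)$.

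\textbf{The main obstacle.} The one point that must be secured is that $\gamma_2$ can be realized by a partial isomorphism supported inside the saturation $Z'$, and this is exactly where the definition of $\Sd(\cR)$ as an intersection is essential: a single range $\Range(\delta_\mu)$ need not be closed under multiplication, precisely because a value may fail to be attainable inside a prescribed $\cR_\mu$-invariant region. I would handle this by passing to the corner $\cR_{Z'}$ and invoking Corollary~\ref{corner_homology}: from $\gamma_2\in\Sd(\cR)$ one obtains $\gamma_2\in\Sd(\cR_{Z'})\subseteq\Range\!\big(\delta_\mu|_{\cR_{Z'}}\big)$, i.e.\ a realization of $\gamma_2$ inside $Z'$. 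Equivalently, one recognizes $\Sd(\cR)$ as the intersection $\bigcap_Z\Range\!\big(\delta_\mu|_{\cR_Z}\big)$ over the $\cR_\mu$-invariant positive measure sets $Z$ --- the ergodic-theoretic counterpart of Connes' Connes spectrum --- for which the saturation--alignment argument above proves stability under products in a self-contained way (the $\cR_\mu$-saturation $Z'$ of $Y$ lands inside any $\cR_\mu$-invariant $Z$ containing $Y$, so the composed isomorphism stays within $\cR_Z$).

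\textbf{What I expect to be delicate.} The genuinely technical step is the identification of $\Sd(\cR)$ with this Connes-spectrum-type intersection, namely its independence of the auxiliary almost periodic measure $\mu$ (changing $\mu$ to $h\mu$ perturbs $\delta_\mu$ by the coboundary $\partial h$, so only the common part survives) together with its invariance under passing to corners (via Corollary~\ref{corner_homology}). The alignment through $[[\cR_\mu]]$ is the conceptual heart and is elementary once a realization inside the saturation is available; by contrast, justifying that such a realization always exists --- equivalently, that the intersection defining $\Sd(\cR)$ behaves like a Connes spectrum --- is where the real work lies, and is the step I would expect to occupy the bulk of the proof.
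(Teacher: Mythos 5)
Your overall strategy is sound and is essentially the paper's: both arguments exploit the fact that $\Sd(\cR)$ is an intersection over \emph{all} almost periodic measures in order to transport a realization of the second value $\gamma_2$ into the region where the realization of $\gamma_1$ lands. The countability, the membership of $1$, and the stability under inverses are all fine. The genuine gap sits exactly at the step you yourself single out as ``the real work'': the claim that $\gamma_2$ can be realized by a partial isomorphism supported in the corner $Z'$. You propose to obtain this from Corollary~\ref{corner_homology}, but that corollary concerns the $\T$-valued cohomology $\rH^1(\cR)$ and says nothing about the range of the $\R^+_0$-valued Radon--Nikodym cocycle; it does not yield $\gamma_2\in\Sd(\cR_{Z'})$ or $\gamma_2\in\Range\bigl(\delta_\mu|_{\cR_{Z'}}\bigr)$. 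As written, closure under products is therefore not established.

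The missing step has a short proof, and it is the one the paper uses. First reduce to the case where $\cR$ is of type $\III$ (in types $\I$ and $\II$ one has $\Sd(\cR)=\{1\}$ and there is nothing to prove); you omit this reduction, and it is needed below. In type $\III$ there exists $\phi\in[[\cR]]$ with $\dom(\phi)=Z'$ and $\ran(\phi)=X$. Then $\nu:=\phi_*\bigl(\mu|_{Z'}\bigr)$ belongs to $\mathcal M_{\ap}(X,\cR)$ and $\Range(\delta_\nu)=\Range\bigl(\delta_\mu|_{\cR_{Z'}}\bigr)$, so the definition of $\Sd(\cR)$ as an intersection over all almost periodic measures gives $\gamma_2\in\Range\bigl(\delta_\mu|_{\cR_{Z'}}\bigr)$ directly. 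Once this is inserted, your alignment through $[[\cR_\mu]]$ does finish the argument. Note, however, that with the push-forward in hand the saturation $Z'$ and the auxiliary $\rho\in[[\cR_\mu]]$ become unnecessary: the paper simply takes $\phi$ with domain equal to the range of the partial isomorphism realizing $\gamma_1$, realizes $\gamma_2$ for the measure $\nu=\phi_*\mu$ by some $\psi$, and composes $\phi^{-1}\psi\phi$ with the first partial isomorphism, reading off the value $\gamma_1\gamma_2$ from the cocycle identity. Your version is correct in outline but strictly longer, and the one step it leaves open is precisely the one that carries the content.
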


\begin{proof}
We may assume that $\mathcal R$ is of type ${\rm III}$. It is easy to see that $\Sd(\mathcal R)$ is stable under taking inverses. It remains to show that $\Sd(\mathcal R)$ is stable under taking products. Let $\lambda_1, \lambda_2 \in \Sd(\mathcal R)$. Fix a measure $\mu \in \mathcal M_{\ap}(X, \mathcal R)$. Since $\lambda_1 \in \Sd(\mathcal R)$, we have $\lambda_1 \in \Range(\delta_\mu)$. Then there exists a nonzero $\theta \in [[\cR]]$ such that $\delta_\mu(\theta(x),x)=\lambda_1$ for a.e.\ $x \in \dom(\theta)$. Since $\mathcal R$ is of type ${\rm III}$, there exists $\phi \in [[\cR]]$ such that $\dom(\phi)=\ran(\theta)$ and $\ran(\phi)=X$. Then $\nu = \phi_*\mu|_{\dom(\phi)} \in \mathcal M_{\ap}(X, \mathcal R)$. Since $\lambda_2 \in \Sd(\mathcal R)$, we have $\lambda_2 \in \Range(\delta_\nu)$. Then there exists $\psi \in [[\cR]]$ such that $\delta_\nu(\psi(y), y) = \lambda_2$ for a.e.\ $y \in \dom(\psi)$. Then $\varphi = \phi^{-1}\psi \phi \theta \in [[\cR]]$ is nonzero and satisfies
\begin{align*}
\delta_\mu(\varphi(x),x) &= \delta_\mu( \phi^{-1}\psi \phi\theta(x),\theta(x)) \, \delta_\mu(\theta(x),x)   \\
&= \delta_\nu(\psi \phi\theta(x),\phi\theta(x)) \, \delta_\mu(\theta(x),x)  \\
&= \lambda_2 \lambda_1.
\end{align*}
for a.e.\ $x \in \dom(\theta)$. This shows that $\lambda_2 \lambda_1 \in \Range(\delta_\mu)$. Since this holds true for every $\mu \in \mathcal M_{\ap}(X, \cR)$, we obtain that $\lambda_2 \lambda_1 \in \Sd(\mathcal R)$.
\end{proof}

We obtain an analogue of \cite[Theorem 4.1]{Co74}.

\begin{theorem}\label{theorem: Sd invariant}
Let $\mathcal R$ be an almost periodic  ergodic equivalence relation on $X$. Assume that $\cR$ is strongly ergodic. Then, for any $\mu \in \mathcal{M}_{\ap}(X,\cR)$, we have
$$\Sd(\mathcal R) = \bigcap_{U  \in \mathfrak{P}(X), \; U \neq \emptyset} \Range(\delta_{\mu}|_{\cR_U}).$$
where $\cR_U=\cR \cap ( U \times U)$ is the restricted equivalence relation.
\end{theorem}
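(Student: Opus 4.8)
The plan is to prove the two inclusions between $\Sd(\cR)$ and $T_\mu := \bigcap_{U\neq\emptyset}\Range(\delta_\mu|_{\cR_U})$ separately, with the genuine analytic input (strong ergodicity) entering only one of them. Throughout I write $f=\rd\nu/\rd\mu$ for a competing measure $\nu\in\mathcal M_{\ap}(X,\cR)$, so that $\delta_\nu=(\partial f)\,\delta_\mu$; since both $\delta_\mu$ and $\delta_\nu$ are step functions, $\partial f$ takes values in a fixed countable set $A\subset\R^+_0$.

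For the inclusion $\Sd(\cR)\subseteq T_\mu$, I would first show $\Sd(\cR_U)=\Sd(\cR)$ for every nonzero $U$, which gives $\Sd(\cR)=\Sd(\cR_U)\subseteq\Range(\delta_{\mu|_U})=\Range(\delta_\mu|_{\cR_U})$ and then intersect over $U$. The inclusion $\Sd(\cR_U)\subseteq\Sd(\cR)$ is immediate, since restricting any $\nu\in\mathcal M_{\ap}(X,\cR)$ to $U$ gives a measure in $\mathcal M_{\ap}(U,\cR_U)$ with $\Range(\delta_{\nu|_U})\subseteq\Range(\delta_\nu)$. For the reverse inclusion I would extend an arbitrary $\eta\in\mathcal M_{\ap}(U,\cR_U)$ to $X$ using the retraction $r\colon X\to U$ built from the full pseudo-group in the proof of Corollary \ref{corner_homology}: defining $\tilde\eta$ on each $\dom(\theta_i)$ as the $\theta_i$-pullback of $\eta$ makes every $\theta_i$ measure preserving, so $\delta_{\tilde\eta}(x,r(x))=1$ and hence $\delta_{\tilde\eta}=r^{*}\delta_\eta$; in particular $\tilde\eta$ is almost periodic with $\Range(\delta_{\tilde\eta})=\Range(\delta_\eta)$. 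Then any $\lambda\in\Sd(\cR)$ lies in $\Range(\delta_{\tilde\eta})=\Range(\delta_\eta)$, proving $\Sd(\cR)\subseteq\Sd(\cR_U)$. This direction uses only ergodicity.

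For $T_\mu\subseteq\Sd(\cR)$, fix $\nu$ and $\lambda\in T_\mu$; the task is to produce a nonzero $\theta\in[[\cR]]$ with $\delta_\nu(\theta(x),x)=\lambda$, i.e.\ a positive-measure set of pairs $(x,y)\in\cR$ with $\delta_\mu(x,y)=\lambda$ \emph{and} $f(x)=f(y)$. When $\cR$ preserves $\mu$ or is of type $\III_\lambda$, the range of $\delta_\nu$ is discrete, so $A$ is discrete; the $\cR$-invariant map $x\mapsto f(x)\langle A\rangle$ is then essentially constant by ergodicity, so $f$ is a genuine step function. Choosing a level set $U=f^{-1}(c)$ of positive measure, the hypothesis $\lambda\in T_\mu\subseteq\Range(\delta_\mu|_{\cR_U})$ yields $\theta\in[[\cR_U]]$ with $\delta_\mu(\theta(x),x)=\lambda$, and since $f\equiv c$ on $U$ we get $\delta_\nu(\theta(x),x)=\lambda$. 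Thus $\lambda\in\Range(\delta_\nu)$, and as this holds for every $\nu$ we conclude $\lambda\in\Sd(\cR)$.

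The main obstacle is the type $\III_1$ case, where $A$ is dense in $\R^+_0$ and $f$ need not be a step function, so exact level sets of $f$ are null and the naive argument only produces pairs with $\delta_\nu=a\lambda$ for $a\in A$ arbitrarily close to, but distinct from, $1$. Here strong ergodicity must be used essentially, in the spirit of Connes' and Marrakchi's treatment of point spectra of full factors. My plan is to partition $X$ into thin slabs $U_j=f^{-1}(I_j)$ with $|I_j|$ small and to use the \emph{uniform} recurrence $\lambda\in T_\mu$, together with a maximality/exhaustion argument as in the proof of Theorem \ref{gap_strongly_ergodic}, to assemble a single $\theta\in[[\cR]]$ supported on a set of mass bounded below independently of the slab size, with $\delta_\mu(\theta(x),x)=\lambda$ and with $f(\theta(x))/f(x)$ confined to a single slab and hence within $\varepsilon$ of $1$. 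The resulting partial isometries $u_\theta\mathbf 1_{\dom(\theta)}\in\rL(\cR)$ are approximate $\lambda$-eigenoperators of the modular flow $\sigma^\nu$ of non-negligible mass. The final and hardest step is to upgrade such a bounded-mass sequence of approximate $\lambda$-eigenoperators to a genuine one; this is precisely where the spectral gap provided by Theorem \ref{gap_strongly_ergodic} is indispensable, and it yields $\lambda\in\Range(\delta_\nu)$, completing the reverse inclusion.
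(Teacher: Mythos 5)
Your treatment of the easy inclusion $\Sd(\cR)\subseteq T_\mu:=\bigcap_{U\neq\emptyset}\Range(\delta_\mu|_{\cR_U})$ is fine (the paper gets it more directly by pushing $\mu$ forward under a partial isomorphism $U\to X$ in the type $\III$ case, but your retraction construction is a valid substitute). The problem is the reverse inclusion. You correctly identify the obstruction: the density $f=\rd\nu/\rd\mu$ is forced to be a step function only when the countable subgroup $\Lambda$ generated by $\Range(\delta_\mu)\cup\Range(\delta_\nu)$ is discrete, since for dense $\Lambda$ the quotient $\R^+_0/\Lambda$ is not countably separated and ergodicity tells you nothing about $x\mapsto f(x)\Lambda$. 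But in that general case you only offer a plan, and you explicitly defer its decisive step (``upgrade a bounded-mass sequence of approximate $\lambda$-eigenoperators to a genuine one''). That step is not a formality: a sequence of nonzero $\theta_n\in[[\cR]]$ with $\delta_\mu(\theta_n(x),x)=\lambda$, $f(\theta_n(x))/f(x)\to 1$ and $\mu(\dom(\theta_n))\geq c>0$ does not by itself force $\lambda\in\Range(\delta_\nu)$, because $\Range(\delta_\nu)$ may accumulate at $\lambda$ with each nearby value carrying mass at least $c$; and Theorem \ref{gap_strongly_ergodic} concerns almost invariant vectors in $\rL^2(X,\mu)$, not approximate eigenoperators of the modular flow, so it is not at all clear that it closes this gap. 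As written, the proposal proves the theorem only when $\Lambda$ is discrete.

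The idea you are missing is the paper's Lemma \ref{lemma: cohomology}: for two $\Lambda$-almost periodic measures $\nu_1,\nu_2$ one has $\delta_{\nu_2}\delta_{\nu_1}^{-1}=\partial h$ for some $h\in\rL^0(X,\Lambda)$, i.e.\ the troublesome density $\rd\nu_2/\rd\nu_1$ can be replaced, \emph{in cohomology}, by a step function with values in the countable group $\Lambda$. This is where strong ergodicity really enters --- through the closedness of $\rB^1(\cR)$ in $\rZ^1(\cR)$ (Schmidt), combined with the vanishing of measurable $2$-cohomology of the compact group $\widehat\Lambda$ to turn the Borel selection $g\mapsto w(g)$ of coboundary data into a genuine homomorphism, which then dualizes to $h$ --- and not through the spectral gap theorem. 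With $h$ in hand the rest is soft: since trivially $\bigcap_{\nu}\bigcap_U\Range(\delta_\nu|_{\cR_U})\subseteq\Sd(\cR)$, it suffices to show $\bigcap_U\Range(\delta_{\nu_1}|_{\cR_U})=\bigcap_U\Range(\delta_{\nu_2}|_{\cR_U})$ for any two almost periodic measures; if some $\lambda$ lay in the first set but not the second, one shrinks the witnessing $U$ until $h|_U$ is constant, whence $\delta_{\nu_1}|_{\cR_U}=\delta_{\nu_2}|_{\cR_U}$, a contradiction. I would recommend replacing your approximate-eigenoperator scheme by this cohomological step.
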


Before proving Theorem \ref{theorem: Sd invariant}, we need some preparation.  

\begin{lemma}\label{lemma: cohomology}
 Let $\Lambda < \R^+_0$ be any countable subgroup. Let $\nu_1, \nu_2 \in \mathcal M_{\ap}(X, \cR)$ be any $\Lambda$-almost periodic measures. Then there exists $h \in \rL^{0}(X,\Lambda)$ such that $$\delta_{\nu_2} \delta_{\nu_1}^{-1} = \partial h \in  \rB^1(\mathcal R, \Lambda).$$
\end{lemma}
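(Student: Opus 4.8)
The plan is to show that the $\Lambda$-valued $1$-cocycle $\omega := \delta_{\nu_2}\delta_{\nu_1}^{-1}$ is in fact a $\Lambda$-valued coboundary. Since $\nu_1,\nu_2$ are $\Lambda$-almost periodic, each $\delta_{\nu_i}$ is a step function valued in $\Lambda$, so $\omega \in \rZ^1(\cR,\Lambda)$ is a step function valued in $\Lambda$; moreover the modulus formula recalled in the preliminaries, $\delta_{\nu_2}\delta_{\nu_1}^{-1} = \partial(\tfrac{\rd\nu_2}{\rd\nu_1})$, already gives $\omega = \partial h_0$ with $h_0 = \tfrac{\rd\nu_2}{\rd\nu_1} \in \rL^0(X,\R^+_0)$. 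By ergodicity the only ambiguity in a primitive of $\omega$ is a multiplicative constant, so the whole content is to pass from the $\R^+_0$-valued primitive $h_0$ to a $\Lambda$-valued one. Throughout I will use that $\cR$ is strongly ergodic (the standing hypothesis under which this lemma is applied): by Schmidt's proposition this says precisely that $\rB^1(\cR,\T)$ is closed in $\rZ^1(\cR,\T)$. This hypothesis is genuinely needed: for the hyperfinite relation given by the translation action of a dense countable subgroup $L$ of $(\R,+)$ on $\R$, the function $g=\mathrm{id}$ satisfies $g(x)-g(y)\in L$ a.e.\ yet is not congruent mod $L$ to a constant, which (passing to $\R^+_0$ via $\exp$) produces $\Lambda$-almost periodic $\nu_1,\nu_2$ for which $\omega$ has no $\Lambda$-valued primitive.

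The first step dualizes the problem. For $p \in \widehat{\Lambda}$ put $\Phi(p) := \langle p,\omega\rangle \in \rZ^1(\cR,\T)$, i.e.\ $\Phi(p)(x,y)=\langle p,\omega(x,y)\rangle$. Because $\omega$ is a step function, $p\mapsto\Phi(p)$ is a continuous homomorphism from the compact group $\widehat{\Lambda}$ into $\rZ^1(\cR,\T)$. The subgroup $\{\chi_s : s\in\R\}$, where $\chi_s(\lambda)=\lambda^{\mathbf{i}s}$, is dense in $\widehat{\Lambda}$ (its annihilator in $\Lambda$ is trivial), and on it $\Phi(\chi_s)=\omega^{\mathbf{i}s}=(\partial h_0)^{\mathbf{i}s}=\partial(h_0^{\mathbf{i}s})\in\rB^1(\cR,\T)$. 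Since $\rB^1(\cR,\T)$ is closed, continuity forces $\Phi(\widehat{\Lambda})\subseteq\rB^1(\cR,\T)$; this is the only point where strong ergodicity is used.

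It remains to manufacture, from the fact that $\Phi(p)$ is a coboundary for \emph{every} character $p$, a single $h\in\rL^0(X,\Lambda)$ with $\partial h=\omega$. I will form $E:=\{(p,u)\in\widehat{\Lambda}\times\rL^0(X,\T):\partial u=\Phi(p)\}$. By the previous step the projection $E\to\widehat{\Lambda}$ is onto, and by ergodicity its kernel is $\{1\}\times\T$, so $1\to\T\to E\to\widehat{\Lambda}\to1$ is a topological extension; crucially $E$ is \emph{compact}, being an extension of a compact group by a compact group. Pontryagin duality then turns this into a short exact sequence of countable discrete abelian groups $0\to\Lambda\to\widehat{E}\to\Z\to0$, which splits because $\Z$ is free; dualizing the splitting back produces a continuous section $s:\widehat{\Lambda}\to E$, hence a continuous homomorphism $\tilde\Psi:=\mathrm{pr}_2\circ s:\widehat{\Lambda}\to\rL^0(X,\T)$ with $\partial\circ\tilde\Psi=\Phi$.

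A standard measurable-selection and Fubini argument then identifies $\tilde\Psi$ with an element $h\in\rL^0(X,\Lambda)$ via $\tilde\Psi(p)=\langle p,h\rangle$ (for a.e.\ $x$ the map $p\mapsto\tilde\Psi(p)(x)$ is a continuous character of $\widehat{\Lambda}$, i.e.\ an element $h(x)$ of $\widehat{\widehat{\Lambda}}=\Lambda$). For every $p$ we then get $\langle p,\partial h\rangle=\partial\langle p,h\rangle=\partial\tilde\Psi(p)=\Phi(p)=\langle p,\omega\rangle$, and since $\widehat{\Lambda}$ separates the points of $\Lambda$ this yields $\partial h=\omega=\delta_{\nu_2}\delta_{\nu_1}^{-1}$, as required. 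I expect the main obstacle to be this third step: the passage from ``coboundary in every character'' to ``$\Lambda$-valued coboundary'' is exactly the splitting of the central extension $1\to\T\to\rL^0(X,\T)\to\rB^1(\cR,\T)\to1$ pulled back over $\widehat{\Lambda}$, and the clean way around it is the observation that the pulled-back group $E$ is compact, so that Pontryagin duality together with the freeness of $\Z$ forces the splitting.
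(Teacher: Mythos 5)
Your proof is correct, and while its opening and closing moves coincide with the paper's, the central lifting step is handled by a genuinely different argument. Both proofs start the same way: pair $\omega=\delta_{\nu_2}\delta_{\nu_1}^{-1}$ with characters of $\Lambda$ to get a continuous homomorphism $\Phi:\widehat{\Lambda}\to\rZ^1(\cR,\T)$, note that $\Phi$ lands in $\rB^1(\cR,\T)$ on the dense one-parameter subgroup, and use closedness of $\rB^1(\cR,\T)$ --- you are right that strong ergodicity is tacitly assumed here and genuinely needed, as your translation-action example shows --- to get $\Phi(\widehat{\Lambda})\subset\rB^1(\cR,\T)$; both also end with the same duality argument converting a continuous homomorphic lift $\widehat{\Lambda}\to\rL^0(X,\T)$ of $\Phi$ into an element $h\in\rL^0(X,\Lambda)$ (the paper is just as brief as you are on that selection/Fubini step). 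The difference is how the lift is produced: the paper picks a Borel section of $\rL^0(X,\T)\to\rB^1(\cR,\T)$ along $\Phi$, obtains a measurable $2$-cocycle on $\widehat{\Lambda}$, and kills it by citing the Austin--Moore vanishing $\rZ^2_{{\rm m}}(\widehat{\Lambda},\T)=\rB^2_{{\rm m}}(\widehat{\Lambda},\T)$ plus Moore's theorem, whereas you form the pullback $E=\{(p,u):\partial u=\Phi(p)\}$, which is automatically a closed Polish subgroup of $\widehat{\Lambda}\times\rL^0(X,\T)$ and hence a genuine \emph{topological} extension $1\to\T\to E\to\widehat{\Lambda}\to1$; its compactness (which does require the open mapping theorem for Polish groups to identify $E/\T$ with $\widehat{\Lambda}$, a point you should make explicit) lets you dualize to $0\to\Lambda\to\widehat{E}\to\Z\to0$ and split using freeness of $\Z$. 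This is a real gain in economy: because the pullback is already topological, you only need $\operatorname{Ext}^1_{\Z}(\Z,\Lambda)=0$ rather than the vanishing of measurable group cohomology, so your argument is self-contained where the paper's leans on external results.
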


\begin{proof}
Without loss of generality, we may assume that $\Lambda \neq \mathbf 1$. Regarding $\Lambda$ as a discrete group, we denote by $G = \widehat \Gamma$ the Pontryagin dual of $\Gamma$. Then $G$ is a second countable compact abelian group and we regard $\R = \widehat{\R^+_0} < \widehat \Gamma =  G$ as a dense subgroup. Let $\nu \in \mathcal M_{\ap}(X, \cR)$ be any $\Lambda$-almost periodic measure. Then the map $\widehat{\delta_\nu} : \R \to \rZ^1(\mathcal R) : t \mapsto \delta_\mu^{{\mathbf i}t}$ uniquely extends to a continuous group homomorphism $\Delta_\nu : G \to \rZ^1(\mathcal R)$ such that $\Delta_\nu(t) = \delta_\mu^{{\mathbf i}t}$ for every $t \in \R$.

For almost every $(x, y) \in \mathcal R$, we have
$$\delta_{\nu_2}(x, y) = \frac{\rd \nu_{2,\ell}}{\rd \nu_{2, r}}(x, y) = \frac{\rd \nu_{2}}{\rd \nu_{1}}(x) \, \frac{\rd \nu_{1,\ell}}{\rd \nu_{1, r}}(x, y) \, \frac{\rd \nu_{1}}{\rd \nu_{2}}(y) = \frac{\rd \nu_{2}}{\rd \nu_{1}}(x) \, \delta_{\nu_1}(x, y) \, \frac{\rd \nu_{1}}{\rd \nu_{2}}(y).$$
This implies that $\Delta_{\nu_2}(t) / \Delta_{\nu_1}(t) = \delta_{\nu_2}^{{\mathbf i}t} / \delta_{\nu_1}^{{\mathbf i}t} \in \rB^1(\mathcal R)$ for every $t \in \R$. By density and since $\rB^1(\mathcal R) \subset \rZ^1(\mathcal R)$ is closed, it follows that $\Delta_{\nu_2}(g) / \Delta_{\nu_1}(g) \in \rB^1(\mathcal R)$ for every $g \in G$.

Since $\mathbf T \subset \rL^0(X, \mathbf T)$ is a closed subgroup and since $\rB^1(\mathcal R) = \rL^0(X, \mathbf T)/\mathbf T$, there exist a Borel map $w : G \to \rL^0(X, \mathbf T)$ such that $\Delta_{\nu_2}(g) / \Delta_{\nu_1}(g) = \partial(w(g))$ for every $g \in G$ and a Borel map $\sigma : G \times G \to \mathbf T$ such that $\sigma \in \rZ^2_{{\rm m}}(G, \mathbf T)$ and $w(gh) = \sigma(g, h) \, w(g)w(h)$ for all $g, h \in G$, where $\rZ^2_{{\rm m}}(G, \mathbf T)$ denotes the group of all measurable scalar $2$-cocycles on $G$. By \cite[Proof of Proposition 5.8]{AM10}, we have $\rZ^2_{{\rm m}}(G, \mathbf T) = \rB^2_{{\rm m}}(G, \mathbf T)$ and hence there exists a measurable map $\upsilon : G \to \mathbf T$ such that $\sigma(g, h) = \upsilon(g) \upsilon(h) \, \overline{\upsilon(gh)}$ for all $g, h \in G$ (see \cite[Theorem 5]{Mo75} for the fact that we may choose $\upsilon : G \to \mathbf T$ so that the $2$-coboundary relation holds everywhere). Define $u : G \to \rL^0(X, \mathbf T) : g \mapsto \upsilon (g) w(g)$. Then $u : G \to \rL^0(X, \mathbf T)$ is a measurable group homomorphism and hence is continuous. Moreover, we have $\Delta_{\nu_2}(g) / \Delta_{\nu_1}(g) = \partial(w(g)) = \partial(u(g))$ for every $g \in G$.

Since $\widehat G = \Lambda$, we may regard the continuous group homomorphism $u : G \to \rL^0(X, \mathbf T)$ as a measurable map $h : X \to \Lambda$ such that $\Delta_{\nu_2}(g) / \Delta_{\nu_1}(g) = \partial(u(g)) = \langle \partial h , g \rangle$ for every $g \in G$. This implies that $\delta_{\nu_2} / \delta_{\nu_1} = \partial h \in  \rB^1(\mathcal R, \Lambda)$.
\end{proof}

\begin{proof}[Proof of Theorem \ref{theorem: Sd invariant}]
Let $\nu \in \mathcal M_{\ap}(X,\cR)$ be any measure and $U \in \mathfrak{P}(X), \; U \neq \emptyset$. If $\cR$ is of type ${\rm I}$ or ${\rm II}$, we have $\Sd(\mathcal R) = \mathbf 1 \subset \Range(\delta_{\nu}|_{\cR_U})$. If $\mathcal R$ is of type ${\rm III}$, there exists $\theta \in [[\cR]]$ such that $\dom(\theta)=U$ and $\ran(\theta)=X$. Then $\theta_\ast \nu \in \mathcal M_{\ap}(X, \mathcal R)$ and $\Sd(\mathcal R) \subset \Range(\delta_{\theta_\ast \nu}) = \Range(\delta_{\nu}|_{\cR_U})$. This shows that in all cases, we have
\begin{equation}\label{equation: inclusion}
\Sd(\mathcal R) \subset \bigcap_{U  \in \mathfrak{P}(X), \; U \neq \emptyset} \Range(\delta_{\nu}|_{\cR_U}).
\end{equation}
On the other hand, we have
$$\bigcap_{\nu \in \mathcal M_{\ap}(X, \mathcal R)} \bigcap_{U  \in \mathfrak{P}(X), \; U \neq \emptyset} \Range(\delta_{\nu}|_{\cR_U}) \subset \Sd(\mathcal R).$$
Therefore, in order to show the reverse inclusion of \eqref{equation: inclusion}, it suffices to prove that for any measures $\nu_1, \nu_2 \in \mathcal M_{\ap}(X, \mathcal R)$, we have
\begin{equation}\label{equation: equality-nu}
\Lambda_1 := \bigcap_{U  \in \mathfrak{P}(X), \; U \neq \emptyset} \Range(\delta_{\nu_1}|_{\cR_U}) = \bigcap_{U  \in \mathfrak{P}(X), \; U \neq \emptyset} \Range(\delta_{\nu_2}|_{\cR_U}) =: \Lambda_2.
\end{equation}
By contradiction, assume that $\Lambda_1 \neq \Lambda_2$. Without loss of generality, we may assume that there exists $\lambda \in \Lambda_1 \setminus \Lambda_2$. By Lemma \ref{lemma: cohomology}, there exists $h \in \rL^{0}(X,\Lambda)$ such that $\delta_{\nu_2}/\delta_{\nu_1} = \partial h$. Since $\lambda \notin \Lambda_2$, there exists a nonzero $U \in \mathfrak{P}(X)$ such that $\lambda \notin \Range(\delta_{\nu_2}|_{\cR_U})$ and up to shrinking $U \in \mathfrak{P}(X)$ if necessary, we may assume that $h|_ 
{U}$ is constant. This means that $\delta_{\nu_1}|_{\cR_U}=\delta_{\nu_2}|_{\cR_U}$. Since $\lambda \in \Lambda_1$, we have $\lambda \in \Range(\delta_{\nu_1}|_{\cR_U})$. 
This however contradicts the fact that $\lambda \notin \Range(\delta_{\nu_2}|_{\cR_U})$.
\end{proof}

We next obtain an analogue of \cite[Theorem 4.7]{Co74}.

\begin{theorem}\label{theorem: uniqueness}
Let $\mathcal R$ be an almost periodic  strongly ergodic equivalence relation on $X$. Put $\Gamma = \Sd(\mathcal R)$. Then there exists a measure $\nu \in \mathcal M_{\ap}(X, \mathcal R)$ that is $\Gamma$-almost periodic.

Moreover, for any $\Gamma$-almost periodic infinite measures $\nu_1, \nu_2 \in \mathcal M_{\ap}(X,\mathcal R)$, there exist $\alpha \in \R^+_0$ and $\theta \in [\mathcal R]$ such that $\theta_\ast \nu_1=\alpha \nu_2$.
\end{theorem}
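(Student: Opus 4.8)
The plan is to prove the two assertions separately: first the existence of a $\Gamma$-almost periodic measure, and then the uniqueness up to a scalar and an inner perturbation by $[\cR]$.

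\textbf{Existence.} Fix any $\mu\in\mathcal M_{\ap}(X,\cR)$ and let $\Lambda<\R^+_0$ be the countable subgroup generated by $\Range(\delta_\mu)$, so that $\mu$ is $\Lambda$-almost periodic and $\Gamma=\Sd(\cR)\subseteq\Lambda$. Let $q:\Lambda\to\Lambda/\Gamma=:Q$ be the quotient homomorphism onto the countable discrete group $Q$ and set $\bar\delta:=q\circ\delta_\mu\in\rZ^1(\cR,Q)$. It suffices to show that $\bar\delta$ is a coboundary: if $\bar\delta=\partial\bar f$ with $\bar f\in\rL^0(X,Q)$, then choosing a measurable set-theoretic lift $f\in\rL^0(X,\Lambda)$ of $\bar f$, the cocycle $\delta_{f^{-1}\mu}=(\partial f)^{-1}\delta_\mu$ is mapped to $\bar\delta^{-1}\bar\delta=e$ by $q$, hence takes values in $\ker q=\Gamma$, so $\nu:=f^{-1}\mu$ is $\Gamma$-almost periodic. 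Since $\Sd(\cR)\subseteq\Range(\delta_\nu)$ for every almost periodic measure, this forces $\Range(\delta_\nu)=\Gamma$ as well.

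To prove $\bar\delta\in\rB^1(\cR,Q)$, I would compute its group of essential values and invoke the standard fact that a cocycle into a countable discrete group over an ergodic relation is a coboundary precisely when its essential values are trivial (see \cite{FM75,Sc79}). For a countable discrete target the essential values of a cocycle $\alpha$ are exactly $\bigcap_{U\neq\emptyset}\Range(\alpha|_{\cR_U})$; thus by Theorem \ref{theorem: Sd invariant} the essential values of $\delta_\mu$, viewed in $\Lambda$, are exactly $\Gamma$. The key point is that no nontrivial essential value survives after passing to $Q$. Suppose $\bar\gamma\in\bigcap_U\Range(\bar\delta|_{\cR_U})$ and lift it to $\gamma\in\Lambda$. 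Fix any nonzero $U$; refining, we obtain a nonzero $A\subseteq U$ and $\theta\in[[\cR]]$ with $\theta(A)\subseteq U$ and $\delta_\mu(\theta x,x)=\gamma\gamma_0$ constant on $A$ for some $\gamma_0\in\Gamma$. Since $\gamma_0^{-1}\in\Gamma$ is an essential value of $\delta_\mu$, it recurs on the nonzero set $\theta(A)$: there are a nonzero $B\subseteq\theta(A)$ and $\psi\in[[\cR]]$ with $\psi(B)\subseteq\theta(A)\subseteq U$ and $\delta_\mu(\psi y,y)=\gamma_0^{-1}$. Composing on $A':=\theta^{-1}(B)$ and using the cocycle identity gives $\delta_\mu((\psi\theta)x,x)=\gamma$ with $(\psi\theta)(A')\subseteq U$, so $\gamma\in\Range(\delta_\mu|_{\cR_U})$. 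As $U$ was arbitrary, $\gamma\in\bigcap_U\Range(\delta_\mu|_{\cR_U})=\Gamma$ by Theorem \ref{theorem: Sd invariant}, whence $\bar\gamma=e$. Therefore $\bar\delta$ has trivial essential values and is a coboundary.

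\textbf{Uniqueness.} Let $\nu_1,\nu_2\in\mathcal M_{\ap}(X,\cR)$ be $\Gamma$-almost periodic infinite measures. By Lemma \ref{lemma: cohomology} (applied with $\Lambda=\Gamma$) there is $h\in\rL^0(X,\Gamma)$ with $\delta_{\nu_2}\delta_{\nu_1}^{-1}=\partial h$; since $\cR$ is ergodic this yields $\frac{\rd\nu_2}{\rd\nu_1}=c\,h$ for some constant $c>0$. Recall that for $\theta\in[\cR]$ one has $\frac{\rd(\theta_\ast\nu_1)}{\rd\nu_1}(x)=\delta_{\nu_1}(\theta^{-1}(x),x)$. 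Hence, taking $\alpha=c^{-1}$, the desired identity $\theta_\ast\nu_1=\alpha\nu_2$ is equivalent to finding $\theta\in[\cR]$ with $\delta_{\nu_1}(\theta^{-1}(x),x)=h(x)$ for a.e.\ $x$. Thus the statement reduces to the following realization problem: given the infinite measure $\nu_1$ with $\Range(\delta_{\nu_1}|_{\cR_U})=\Gamma$ for every nonzero $U$ (again by Theorem \ref{theorem: Sd invariant}) and a measurable $h:X\to\Gamma$, construct $\theta\in[\cR]$ whose Radon--Nikodym cocycle equals $h$.

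I expect this realization step to be the main obstacle. I would attack it by a measurable back-and-forth exhaustion: consider the poset of partial isomorphisms $\theta\in[[\cR]]$ with $\delta_{\nu_1}(\theta^{-1}(y),y)=h(y)$ on $\ran(\theta)$, which is inductive inside the complete boolean algebra $\mathfrak P(X)$, and take a maximal element $\theta:D\to R$. If $\nu_1(X\setminus R)>0$, I would pick a small nonzero $V\subseteq X\setminus R$ and, using ergodicity to connect orbits together with the fact that \emph{every} element of $\Gamma$ is an essential value of $\delta_{\nu_1}$ recurring on every nonzero set, match a nonzero piece of the unused domain $X\setminus D$ into $V$ realizing the prescribed value $h|_V$, contradicting maximality; the analogous step with the roles of $D$ and $R$ exchanged, iterated countably many times, exhausts both $D$ and $R$ up to null sets. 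The hypothesis that $\nu_1$ and $\nu_2$ are \emph{infinite} is exactly what removes the total-mass obstruction and guarantees there is always enough orbit-mass left to continue the matching, which is why a single scalar $\alpha$ suffices and why the finite case is excluded. The delicate points will be the measurable selection ensuring that $\theta$ remains a genuine injective partial isomorphism at each stage and the convergence of the exhaustion, both handled via the completeness of $\mathfrak P(X)$ and a diagonal argument.
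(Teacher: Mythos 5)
Your existence argument has a genuine gap at its decisive step. You reduce everything to showing that $\bar\delta = q\circ\delta_\mu \in \rZ^1(\cR,\Lambda/\Gamma)$ is a coboundary, and you invoke the criterion ``a cocycle into a countable discrete group over an ergodic relation is a coboundary precisely when its essential values are trivial.'' That criterion is false: triviality of the essential values in $Q=\Lambda/\Gamma$ is necessary but not sufficient, and one must in addition exclude the essential value at infinity, i.e.\ prove that the cocycle is recurrent rather than transient. For instance, over any ergodic pmp transformation $T$ of a diffuse space, the cocycle $c\in\rZ^1(\cR(T),\Z)$ determined by $c(Tx,x)=1$ has trivial ratio set $\bigcap_{U}\Range(c|_{\cR(T)_U})=\{0\}$ (Rokhlin towers give, for each $k\neq 0$, a nonzero $U$ with $U\cap T^{-k}U$ null), yet $c$ is transient and is not a coboundary, since $b(T^nx)=b(x)+n$ would contradict Poincar\'e recurrence. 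So your computation that $\bigcap_U\Range(\bar\delta|_{\cR_U})=\{e\}$ --- which is correct and is a nice direct consequence of Theorem \ref{theorem: Sd invariant} --- does not yet yield $\bar\delta\in\rB^1(\cR,Q)$. Excluding transience is precisely where strong ergodicity must be used a second time, beyond its use inside Theorem \ref{theorem: Sd invariant}: the paper does this by forming the skew product $\rd(\cR)=\cR\times_{\delta_\eta}\Lambda$ and showing that its ergodic decomposition is completely atomic, because otherwise the translation action of the countable \emph{amenable} group $\Lambda$ on the diffuse algebra $\rL^\infty(X\times\Lambda)^{\rd(\cR)}$ could not be strongly ergodic and would produce a nontrivial almost invariant sequence for an amplification of $\cR$. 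An argument of this kind (strong ergodicity combined with amenability of $\Lambda$ or of $Q$) is indispensable here and is entirely absent from your proof.

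For the uniqueness part, your reduction to realizing the prescribed $\Gamma$-valued function $h$ as the Radon--Nikodym cocycle of some $\theta\in[\cR]$ is correct, and the back-and-forth exhaustion you outline can be made to work; but the genuinely delicate points (a maximal partial isomorphism may have conull domain without conull range, the simultaneous exhaustion of $\nu_1$ on the domain and of $\alpha\nu_2$ on the range, the measurable selection at each stage) are exactly the content of the statement and are only announced, not carried out. The paper avoids all of this with a cleaner device: it forms the amplification $\mathcal T=\cR\otimes\mathcal{S}_2$ on $X\times\{0,1\}$ equipped with the measure $\nu=\nu_1\otimes\delta_0+\alpha\nu_2\otimes\delta_1$, checks that $\nu$ is $\Gamma$-almost periodic, deduces from Lemma \ref{lemma: equivalence} that the measure preserving relation $\mathcal T_\nu$ is ergodic, and then matches the two fibers, both of infinite $\nu$-measure, by a single partial isomorphism in $[[\mathcal T_\nu]]$. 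Your realization problem is literally equivalent to this matching, so if you keep the direct route you should at least reduce it explicitly to the standard comparison statement for ergodic infinite measure preserving equivalence relations rather than re-deriving it by hand.
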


Before proving Theorem \ref{theorem: uniqueness}, we need some preparation.

\begin{lemma}\label{lemma: equivalence}
Let $\mathcal R$ be an almost periodic strongly ergodic equivalence relation on $X$. Put $\Gamma = \Sd(\mathcal R)$. Let $\nu \in \mathcal M_{\ap}(X,\mathcal R)$ be any measure. The following assertions are equivalent:
\begin{itemize}
\item [$(\rm i)$] $\nu$ is $\Gamma$-almost periodic. 
\item [$(\rm ii)$] The subequivalence relation $\mathcal R_\nu \subset \mathcal R$ defined by $\mathcal R_\nu := \ker(\delta_\nu)$ is ergodic.
\end{itemize}
\end{lemma}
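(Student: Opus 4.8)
The plan is to translate both conditions into statements about the ranges $\Range(\delta_\nu|_{\mathcal R_U})$ and then to treat the two implications separately. First I would record the elementary reductions. Since $\Sd(\mathcal R)=\bigcap_{\mu\in\mathcal M_{\ap}(X,\mathcal R)}\Range(\delta_\mu)$, we always have $\Gamma\subseteq\Range(\delta_\nu)$; consequently $\nu$ is $\Gamma$-almost periodic, i.e.\ $\Range(\delta_\nu)\subseteq\Gamma$, if and only if $\Range(\delta_\nu)=\Gamma$. Because $\mathcal R$ is strongly ergodic and almost periodic, Theorem \ref{theorem: Sd invariant} gives $\Gamma=\bigcap_{U\in\mathfrak P(X),\,U\neq\emptyset}\Range(\delta_\nu|_{\mathcal R_U})$, and each term of this intersection is contained in $\Range(\delta_\nu)$. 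Hence the equality $\Range(\delta_\nu)=\Gamma$ is equivalent to $\Range(\delta_\nu|_{\mathcal R_U})=\Range(\delta_\nu)$ for \emph{every} nonzero $U$, and the whole lemma reduces to the equivalence
\[ \mathcal R_\nu \text{ is ergodic}\quad\Longleftrightarrow\quad \Range(\delta_\nu|_{\mathcal R_U})=\Range(\delta_\nu)\ \text{ for all nonzero } U\in\mathfrak P(X). \]

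For $(\rm ii)\Rightarrow(\rm i)$ I would run a transport argument. Assume $\mathcal R_\nu$ ergodic, fix a nonzero $U$ and $\lambda\in\Range(\delta_\nu)$, and pick $\theta\in[[\mathcal R]]$ with $\delta_\nu(\theta(x),x)=\lambda$ on a nonzero set $A=\dom(\theta)$. Using ergodicity of $\mathcal R_\nu$, I transport both endpoints into $U$ through $\delta_\nu$-preserving moves: choose $g\in[[\mathcal R_\nu]]$ with $\dom(g)\subseteq A$ nonzero and $\ran(g)\subseteq U$, then $h\in[[\mathcal R_\nu]]$ with $\dom(h)\subseteq\theta(\dom(g))$ nonzero and $\ran(h)\subseteq U$, and set $A_0=\theta^{-1}(\dom(h))$. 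For $x\in A_0$ put $x'=g(x)\in U$ and $y'=h(\theta(x))\in U$; the cocycle identity yields $\delta_\nu(y',x')=\delta_\nu(y',\theta(x))\,\delta_\nu(\theta(x),x)\,\delta_\nu(x,x')=1\cdot\lambda\cdot1=\lambda$, with $(y',x')\in\mathcal R_U$ on the nonzero set $A_0$. Thus $\lambda\in\Range(\delta_\nu|_{\mathcal R_U})$, which is $(\rm i)$.

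The implication $(\rm i)\Rightarrow(\rm ii)$ is the heart of the matter, and I would prove its contrapositive. Assume $\mathcal R_\nu$ is not ergodic and let $\pi:X\to Y$ be the factor map onto the nontrivial space of $\mathcal R_\nu$-ergodic components, so that the $\mathcal R_\nu$-invariant subsets of $X$ are exactly the $\pi^{-1}(\bar E)$ with $\bar E\subseteq Y$. For $\lambda\in\Lambda:=\Range(\delta_\nu)$ and an $\mathcal R_\nu$-invariant set $E$, write $E_\lambda:=\{x:\exists y\in E,\ (x,y)\in\mathcal R,\ \delta_\nu(x,y)=\lambda\}$ for its ``$\lambda$-reach''; it is again $\mathcal R_\nu$-invariant. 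The key observation is that two $\lambda$-targets $y,y'$ of a fixed $x$ satisfy $\delta_\nu(y,y')=1$, i.e.\ $(y,y')\in\mathcal R_\nu$; this forces $E\mapsto E_\lambda$ to respect disjointness and relative complements, so it descends to a non-singular partial transformation $T_\lambda$ of $Y$, and these assemble into a partial action of the countable abelian group $\Lambda$. Since $\bigcup_\lambda E_\lambda$ is precisely the $\mathcal R$-saturation of $E$, ergodicity of $\mathcal R$ means that no nonzero proper $\bar E\subseteq Y$ is invariant under all the $T_\lambda$; together with $Y$ nontrivial, this forces some $T_{\lambda_0}$ to differ from the identity on a positive-measure part of its domain. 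A standard moving-set argument (separate $\{\bar y:T_{\lambda_0}\bar y\neq\bar y\}$ with a countable generating family $\{B_i\}$, and take $\bar U=\{\bar y\in B_i:T_{\lambda_0}\bar y\notin B_i\}$ for a suitable $i$) then produces a nonzero $\bar U\subseteq Y$ with $\bar U\cap T_{\lambda_0}^{-1}(\bar U)=\emptyset$. Setting $U=\pi^{-1}(\bar U)$, we get $U\cap U_{\lambda_0}=\pi^{-1}(\bar U\cap T_{\lambda_0}^{-1}\bar U)=\emptyset$, which says exactly that $\lambda_0\notin\Range(\delta_\nu|_{\mathcal R_U})$; hence $\Gamma\subseteq\Range(\delta_\nu|_{\mathcal R_U})\subsetneq\Range(\delta_\nu)$ and $\nu$ is not $\Gamma$-almost periodic.

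The main obstacle is the construction in the last paragraph: checking that the reaches $E\mapsto E_\lambda$ genuinely descend to (partial) point transformations of $Y$, using the observation that targets of a common point are $\mathcal R_\nu$-equivalent, and that ergodicity of $\mathcal R$ corresponds to ergodicity of the resulting partial $\Lambda$-action on $Y$. Once this dictionary is in place the existence of a set disjoint from one of its translates is soft. The type $\I$ and $\II$ cases are immediate, since there $\Gamma=\mathbf 1$ and $(\rm i)$ just says $\delta_\nu\equiv1$, that is $\mathcal R_\nu=\mathcal R$, and $(\rm i)\Leftrightarrow(\rm ii)$ is then tautological. Note that strong ergodicity of $\mathcal R$ enters only through the use of Theorem \ref{theorem: Sd invariant} in the opening reduction.
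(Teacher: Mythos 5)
Your argument is correct, and the opening reduction (via Theorem \ref{theorem: Sd invariant}, the lemma is equivalent to: $\mathcal R_\nu$ is ergodic iff $\Range(\delta_\nu|_{\cR_U})=\Range(\delta_\nu)$ for every nonzero $U$) is exactly the pivot the paper uses as well. Your proof of $(\rm ii)\Rightarrow(\rm i)$ is essentially the paper's: transport a $\lambda$-valued partial isomorphism into $U$ by pre- and post-composing with elements of $[[\cR_\nu]]$. Where you genuinely diverge is $(\rm i)\Rightarrow(\rm ii)$. The paper argues directly: given nonzero $U,V$, it picks $\phi\in[[\cR]]$ from $U$ to $V$ with constant value $\lambda\in\Gamma$, then uses that $\Gamma$ is a \emph{group} (Proposition \ref{proposition: group}) together with $\Range(\delta_\nu|_{\cR_{\dom(\phi)}})=\Gamma$ to find $\psi$ with constant value $\lambda^{-1}$ inside $\dom(\phi)$, so that $\phi\psi\in[[\cR_\nu]]$ connects $U$ to $V$. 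You instead prove the contrapositive by passing to the space $Y$ of $\cR_\nu$-ergodic components, showing that the ``$\lambda$-reach'' maps descend to nonsingular partial point transformations of $Y$ (your key observation that two $\lambda$-targets of a common point are $\cR_\nu$-equivalent is precisely what makes the descent work), and then separating a nontrivial $T_{\lambda_0}$ from the identity to manufacture a corner $U$ on which the value $\lambda_0$ is not attained. Both routes are valid; the paper's is shorter and stays entirely inside the full pseudo-group, while yours avoids any use of the group structure of $\Gamma$ in this direction at the cost of invoking the ergodic decomposition and the standard (but nontrivial) fact that a $\sigma$-complete Boolean homomorphism of measure algebras of standard spaces is induced by a point map. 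One small imprecision: $\Lambda=\Range(\delta_\nu)$ is closed under inverses but need not be closed under products (the paper needs the type ${\rm III}$ hypothesis and a change of measure even to prove this for $\Sd(\cR)$), so calling the family $(T_\lambda)_\lambda$ a ``partial action of the countable abelian group $\Lambda$'' overstates things --- but your argument only ever uses the individual $T_\lambda$, so nothing breaks.
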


\begin{proof}
$(\rm i) \Rightarrow (\rm ii)$ By Theorem \ref{theorem: Sd invariant}, we have $\Gamma =  \Sd(\mathcal R) \subset \Range({\delta_\nu}|_{\cR_U})$ for every nonzero $U \in \mathfrak{P}(X)$. Since $\nu$ is $\Gamma$-almost periodic, we also have $\Range({\delta_\nu}|_{\cR_U}) \subset \Gamma$ for every nonzero $U \in \mathfrak{P}(X)$. This implies that $\Range({\delta_\nu}|_{\cR_U}) = \Gamma$ for every nonzero $U \in \mathfrak{P}(X)$. In order to show that $\mathcal R_\nu$ is ergodic, it suffices to show that for any nonzero $U, V \in \mathfrak{P}(X)$, there exists a nonzero element $\theta \in [[\cR_\nu]]$ such that $\dom(\theta) \subset U$ and $\ran(\theta) \subset V$. Since $\mathcal R$ is ergodic, we can find a nonzero $\phi \in [[\cR]]$ such that $\dom(\phi) \subset U$, $\ran(\phi) \subset V$ and $\delta_\nu(\phi(y), y) = \lambda \in \Gamma$ for a.e.\ $y \in \dom(\phi)$. Since $\lambda^{-1} \in \Range({\delta_\nu}|_{\cR_{\dom(\phi)}})$, we can find a nonzero $\psi \in [[\cR]]$ such that $\dom(\psi) \subset \dom(\phi)$, $\ran(\psi) \subset \dom(\phi)$ and $\delta_\nu(\psi(x), x) = \lambda^{-1}$ for a.e.\ $x \in \dom(\psi)$. Then $\theta = \phi \psi$ is a nonzero element of $[[\cR_\nu]]$ such that $\dom(\theta) \subset U$ and $\ran(\theta) \subset V$. This shows that $\mathcal R_\nu$ is ergodic.

$(\rm ii) \Rightarrow (\rm i)$ By Theorem \ref{theorem: Sd invariant}, we know that 
$$\Gamma = \bigcap_{U  \in \mathfrak{P}(X), \; U \neq \emptyset} \Range(\delta_{\nu}|_{\cR_U}).$$
Hence, in order to show that $\nu$ is $\Gamma$-almost periodic, it suffices to prove that $\Range(\delta_\nu) = \Range(\delta_{\nu}|_{\cR_U})$ for every nonzero $U \in \mathfrak{P}(X)$. First, we always have $\Range(\delta_{\nu}|_{\cR_U}) \subset \Range(\delta_\nu)$. Next, let $\lambda \in \Range(\delta_\nu)$. Then there exists a nonzero $\phi \in [[\cR]]$ such that $\delta_\nu(\phi(x),x) = \lambda$ for a.e.\ $x \in \dom(\phi)$. Since $\mathcal R_\nu$ is ergodic, we can find a nonzero $\psi_1 \in [[\cR_\nu]]$ with $\dom(\psi_1) \subset U$ and $\ran(\psi_1) \subset \dom(\phi)$ and a nonzero $\psi_2 \in [[\cR_\nu]]$ such that $\dom(\psi_2) \subset \phi(\ran(\psi_1))$ and $\ran(\psi_2) \subset U$. Then $\theta = \psi_2 \phi \psi_1$ is a nonzero element of $[[\cR_U]]$ such that $\delta_\nu(\theta(x),x)=\lambda$ for a.e.\ $x \in \dom(\theta)$. This shows that $\lambda \in \Range(\delta_{\nu}|_{\cR_U})$. Therefore, $\Range(\delta_{\nu}|_{\cR_U}) = \Range(\delta_\nu)$ and hence $\Range(\delta_\nu) = \Gamma$.
\end{proof}

\begin{proof}[Proof of Theorem \ref{theorem: uniqueness}]
First, we prove that there exists a measure $\nu \in \mathcal M_{\ap}(X,\mathcal R)$ that is $\Gamma$-almost periodic. We may assume without loss of generality that $\mathcal R$ is of type ${\rm III}$. Let $\eta \in \mathcal M_{\ap}(X, \mathcal R)$ be any $\Lambda$-almost periodic measure for some countable subgroup $\Lambda < \R^+_0$.  By viewing $\delta_\eta$ as an element of $\rZ^1(\cR,\Lambda)$, we can consider the skew-product equivalence relation $\rd(\mathcal R)=\cR \times_{\delta_\eta} \Lambda$ on $X \times \Lambda$. Observe that the translation action $\Lambda \curvearrowright X \times \Lambda$ acts by automorphisms of the equivalence relation $\rd(\mathcal R)$. Moreover, the equivalence relation generated by $\rd(\mathcal R)$ and $\mathcal R(\Lambda \curvearrowright X \times \Lambda)$ coincides with the equivalence relation $\mathcal S$ defined by 
$$((x, g), (y, h)) \in \mathcal S \quad \text{if and only if} \quad (x, y) \in \mathcal R$$
for a.e.\ $(x,y) \in \cR$ and all $g,h \in \Lambda$. Observe that $\mathcal S$ is nothing but an amplification of $\mathcal R$ and hence $\mathcal S$ is strongly ergodic.

We show that $\rd(\mathcal R)$ has a completely atomic ergodic decomposition. Since the translation action $\Lambda \curvearrowright X \times \Lambda$ acts by automorphisms of the equivalence relation $\rd(\mathcal R)$, it induces an action on $\rL^{\infty}(X \times \Lambda)$ which globally preserves the algebra of $\rd(\cR)$-invariant functions $\rL^{\infty}(X \times \Lambda)^{\rd(\cR)}$. Since $\mathcal S$ is ergodic, the action $\Lambda \curvearrowright \rL^{\infty}(X \times \Lambda)^{\rd(\cR)}$ is ergodic. So, either $\rL^{\infty}(X \times \Lambda)^{\rd(\cR)}$ is discrete (completely atomic) or diffuse. Assume by contradiction that it is diffuse. Since $\Lambda$ is abelian hence amenable, the action $\Lambda \curvearrowright \rL^{\infty}(X \times \Lambda)^{\rd(\cR)}$ is not strongly ergodic (see e.g.\ \cite[Proposition 2.2]{Sc79}). Hence, we can find a non-trivial almost $\Lambda$-invariant sequence in $\rL^{\infty}(X \times \Lambda)^{\rd(\cR)}$. But this provides a non-trivial almost $\mathcal S$-invariant sequence in $\rL^{\infty}(X \times \Lambda)$. This however contradicts the fact that $\mathcal S$ is strongly ergodic.

Since $\mathcal R_\eta \cong \rd(\mathcal R)_{X \times \{1\}}$, it follows that $\mathcal R_\eta$ also has a completely atomic ergodic decomposition. Let $U \in \mathfrak{P}(X)$ be any nonzero subset such that $\mathcal R_{\eta} \cap (U \times U)$ is ergodic. Since $\mathcal R$ is of type ${\rm III}$, there exists $\theta \in [[\cR]]$ such that $\dom(\theta)=U$ and $\ran(\theta)=X$. Put $\nu = \theta_\ast \eta \in \mathcal M_{\ap}(X, \mathcal R)$. Then $\mathcal R_\nu$ is ergodic and hence $\nu$ is $\Gamma$-almost periodic by Lemma \ref{lemma: equivalence}.

Secondly, let $\nu_1, \nu_2 \in \mathcal M_{\ap}(X,  \mathcal R)$ be any $\Gamma$-almost periodic infinite measures. By Lemma \ref{lemma: cohomology}, there exists $h \in \rL^{0}(X,\Gamma)$ such that $\delta_{\nu_2}/\delta_{\nu_1} = \partial h$. For almost every $(x, y) \in \mathcal R$, we have 
$$h(x) \, h(y)^{-1} = \frac{\delta_{\nu_2}(x, y)}{\delta_{\nu_1}(x, y)} = \frac{\rd \nu_2}{\rd \nu_1}(x) \, \frac{\rd \nu_1}{\rd \nu_2}(y)$$
and hence 
$$h(x) \,  \frac{\rd \nu_1}{\rd \nu_2}(x) = h(y) \,  \frac{\rd \nu_1}{\rd \nu_2}(y).$$
Since $\mathcal R$ is ergodic, it follows that the function $h \,  \frac{\rd \nu_1}{\rd \nu_2}$ is constant and equal to some $\alpha \in \R^+_0$. Consider the amplified equivalence relation $\mathcal T=\cR \otimes \mathcal{S}_2$ on $X \times \{0,1\}$. Since $\mathcal{T}$ is isomorphic to $\cR$, we know that $\mathcal T$ is strongly ergodic, almost periodic and that $\Sd(\mathcal T) = \Sd(\mathcal R) = \Gamma$. Define the measure $\nu=\nu_1 \otimes \delta_0+\alpha\nu_2 \otimes \delta_1$ on $X \times \{0, 1\}$. For almost every $(x, y) \in \mathcal R$, we have $\delta_\nu((x, 0), (y, 0)) = \delta_{\nu_1}(x, y)$, $\delta_\nu((x, 1), (y, 1)) = \delta_{\alpha \nu_2}(x, y)$ and $\delta_\nu((x, 0), (y, 1)) = \delta_\nu((x, 0), (x, 1))\, \delta_\nu((x, 1), (y, 1)) = h(x)^{-1} \, \delta_{\alpha \nu_2}(x, y)$. This implies that $\nu$ is $\Gamma$-almost periodic. Hence, by Lemma \ref{lemma: equivalence}, $\mathcal T_\nu$ is ergodic. Since $\nu(X \times \{0\}) = \nu_1(X) = + \infty = \alpha\nu_2(X) = \nu(X \times \{1\})$, there exists a partial isomorphism $\Theta \in [[\mathcal{T}_\nu]]$ such that $\dom(\Theta)=X \times \{0\}$ and $\ran(\Theta)=X \times \{1\}$. Define $\theta \in [\cR]$ by the formula $(\theta(x),1)  = \Theta(x, 0)$ for a.e.\ $x \in X$. Then $\alpha \nu_2 = \theta_\ast\nu_1$. 
\end{proof}

\begin{theorem}\label{theorem: ergodic}
Let $\mathcal R$ be an almost periodic strongly ergodic equivalence relation on $X$. Put $\Gamma = \Sd(\mathcal R)$. Then for any $\Gamma$-almost periodic measure $\nu \in \mathcal M_{\ap}(X,\mathcal R)$, the subequivalence relation $\mathcal R_\nu \subset \mathcal R$ defined by $\mathcal R_\nu := \ker(\delta_\nu)$ is strongly ergodic.
\end{theorem}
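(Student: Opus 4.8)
The plan is to realize $\mathcal R_\nu$ as a non-null restriction of a skew-product and invoke Theorem~\ref{strongly_ergodic_ext}. Regard $\Gamma=\Sd(\mathcal R)$ as a countable \emph{discrete} abelian group and view the modulus as a cocycle $\delta_\nu\in\rZ^1(\mathcal R,\Gamma)$; this is legitimate since $\nu$ is $\Gamma$-almost periodic, so $\Range(\delta_\nu)\subset\Gamma$, while Theorem~\ref{theorem: Sd invariant} gives $\Gamma\subset\Range(\delta_\nu|_{\mathcal R_U})$ for every non-null $U$, whence in fact $\Range(\delta_\nu)=\Gamma$. By Lemma~\ref{lemma: equivalence} the hypothesis that $\nu$ is $\Gamma$-almost periodic is equivalent to ergodicity of $\mathcal R_\nu=\ker(\delta_\nu)$, a fact I will use repeatedly. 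The key elementary observation is that, inside the skew-product $\mathcal R\times_{\delta_\nu}\Gamma$ on $X\times\Gamma$, the pair $((x,1),(y,1))$ is related precisely when $(x,y)\in\mathcal R$ and $\delta_\nu(x,y)=1$; thus $\mathcal R_\nu$ is canonically isomorphic to the restriction $(\mathcal R\times_{\delta_\nu}\Gamma)_{X\times\{1\}}$ to the non-null slice $X\times\{1\}$ (non-null because $\Gamma$ is discrete).

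The heart of the argument is to check assertion~$(\rm ii)$ of Theorem~\ref{strongly_ergodic_ext} for the pair $(\mathcal R,\delta_\nu)$, i.e.\ that $\mathcal R$ is strongly ergodic (given) and that $[\widehat{\delta_\nu}]:\widehat\Gamma\to\rH^1(\mathcal R)$ is a homeomorphism onto its range. Since $\Gamma$ is discrete, $\widehat\Gamma$ is compact, and since $\mathcal R$ is strongly ergodic, $\rH^1(\mathcal R)$ is Hausdorff; therefore a continuous \emph{injective} homomorphism from $\widehat\Gamma$ is automatically a homeomorphism onto its (necessarily closed) image, and it remains only to prove injectivity. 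So suppose $\widehat{\delta_\nu}(p)=\partial u$ for some $p\in\widehat\Gamma$ and $u\in\rL^0(X,\T)$, i.e.\ $\langle p,\delta_\nu(x,y)\rangle=u(x)\overline{u(y)}$ for a.e.\ $(x,y)\in\mathcal R$. Restricting to $\mathcal R_\nu$, where $\delta_\nu\equiv 1$, shows $u(x)=u(y)$ for a.e.\ $(x,y)\in\mathcal R_\nu$, so $u$ is constant by ergodicity of $\mathcal R_\nu$; then $\langle p,\delta_\nu(x,y)\rangle=1$ for a.e.\ $(x,y)\in\mathcal R$, and as $\Range(\delta_\nu)=\Gamma$ this forces $\langle p,\gamma\rangle=1$ for every $\gamma\in\Gamma$, i.e.\ $p=1$.

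With $(\rm ii)$ verified, I apply Theorem~\ref{strongly_ergodic_ext}: the discrete group $\Gamma$ contains itself as a (trivially cocompact) lattice, so the implication $(\rm ii)\Rightarrow(\rm i)$ applies and shows that $\mathcal R\times_{\delta_\nu}\Gamma$ is strongly ergodic. Finally, strong ergodicity is inherited by the restriction of an ergodic equivalence relation to a non-null subset---the same stability used in the proof of Theorem~\ref{strongly_ergodic_ext}---so the isomorphism $\mathcal R_\nu\cong(\mathcal R\times_{\delta_\nu}\Gamma)_{X\times\{1\}}$ yields that $\mathcal R_\nu$ is strongly ergodic, as desired. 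The only genuinely substantial step is the injectivity/homeomorphism verification in the second paragraph, where ergodicity of $\mathcal R_\nu$ and the surjectivity $\Range(\delta_\nu)=\Gamma$ are combined; the reduction to the skew-product and the passage to the slice are formal.
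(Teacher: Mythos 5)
Your argument is correct, and it reaches the conclusion by a genuinely different route from the paper, even though both start from the same reduction: realize $\mathcal R_\nu$ as the restriction of the skew-product $\rd(\mathcal R)=\mathcal R\times_{\delta_\nu}\Gamma$ to the non-null slice $X\times\{1\}$, and then transfer strong ergodicity from the skew-product to the slice. Where you differ is in how strong ergodicity of $\rd(\mathcal R)$ is obtained. The paper argues directly: it shows $\rd(\mathcal R)$ is \emph{ergodic} by combining Lemma~\ref{lemma: equivalence} (ergodicity of each slice $\rd(\mathcal R)_{X\times\{\gamma\}}$) with the transitivity of the translation action $\Gamma\curvearrowright X\times\Gamma$ and the equality $\Range(\delta_\nu)=\Gamma$, and then invokes Proposition~\ref{amenable} for the amenable group $\Gamma$, using that the relation generated by $\rd(\mathcal R)$ and $\mathcal R(\Gamma\curvearrowright X\times\Gamma)$ is an amplification of $\mathcal R$, hence strongly ergodic. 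You instead verify hypothesis~$(\rm ii)$ of Theorem~\ref{strongly_ergodic_ext} --- the only substantive point being injectivity of $[\widehat{\delta_\nu}]$ on the compact dual $\widehat\Gamma$, which you deduce from ergodicity of $\mathcal R_\nu$ (Lemma~\ref{lemma: equivalence} again) together with $\Range(\delta_\nu)=\Gamma$, the compact-to-Hausdorff argument then upgrading injectivity to a homeomorphism onto the range --- and you invoke the skew-product theorem as a black box. The two proofs are cousins (the lattice case of Theorem~\ref{strongly_ergodic_ext} itself runs through Proposition~\ref{amenable}), but the key verification is different: slice-ergodicity and transitivity on the one hand, cohomological injectivity of the dual cocycle on the other. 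Your route is arguably cleaner in that it delegates all the ergodicity bookkeeping to an already-proved theorem, at the price of being less self-contained and less elementary; the paper's route avoids any appeal to $\rH^1$ in this step. Both rest on the same two ingredients from Section~5, namely Theorem~\ref{theorem: Sd invariant} and Lemma~\ref{lemma: equivalence}, and there is no circularity since Theorem~\ref{strongly_ergodic_ext} is established earlier.
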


\begin{proof}
As in the first part of the proof of Theorem \ref{theorem: uniqueness}, consider the skew-product equivalence relation $\rd(\mathcal R)=\cR \times_{\delta_\nu} \Gamma$ on $X \times \Gamma$. The equivalence relation generated by $\rd(\mathcal R)$ and $\mathcal R(\Gamma \curvearrowright X \times \Gamma)$ coincides with the equivalence relation $\mathcal S$ defined by 
$$((x, g), (y, h)) \in \mathcal S \quad \text{if and only if} \quad (x, y) \in \mathcal R$$
for a.e.\ $(x,y) \in \cR$ and all $g,h \in \Gamma$. Observe that $\mathcal S$ is nothing but an amplification of $\mathcal R$ and hence $\mathcal S$ is strongly ergodic. By Lemma \ref{lemma: equivalence}, we know that $\rd(\mathcal R)_{X \times \{1\}} \cong \mathcal R_\nu$ is ergodic. Since $\Gamma \curvearrowright X \times \Gamma$ preserves $\rd(\mathcal R)$, we have that $\rd(\mathcal R)_{X \times \{\gamma\}}$ is ergodic for every $\gamma \in \Gamma$. It follows that the ergodic component of $X \times \{1 \}$ is of the form $X \times S$ for some subset $S \subset \Gamma$. But, by definition of $\rd(\cR)$, this means that $S=\mathrm{Range}(\delta_\nu)=\Gamma$. Hence $\rd(\cR)$ is ergodic. Now, we conclude, by Proposition \ref{amenable}, that $\rd( \cR)$ is strongly ergodic. Hence $\rd(\mathcal R)_{X \times \{1\}} \cong \mathcal R_\nu$ is also strongly ergodic.
\end{proof}

\begin{proof}[Proof of Theorem \ref{thmstar:almost-periodic}]
$(\rm iii)$ and $(\rm v)$ follow from Theorem \ref{theorem: uniqueness} and $(\rm iv)$ follows from Theorem \ref{theorem: ergodic}. 

$(\rm i)$ Choose $\nu \in \mathcal M_{\ap}(X, \mathcal R)$ as in the first part of Theorem \ref{theorem: uniqueness}. Then $\cR_\nu$ is ergodic by Lemma \ref{lemma: equivalence}. Hence we have $\mathrm{S}(\cR)=\overline{\mathrm{Range}}(\delta_\nu)$. And we also have $\mathrm{Sd}(\cR)= \mathrm{Range}(\delta_\nu)$. Hence $\mathrm{S}(\cR)=\overline{\mathrm{Sd}(\cR)}$.  

$(\rm ii)$ Choose $\nu \in \mathcal M_{\ap}(X,\mathcal R)$ as in the first part of Theorem \ref{theorem: uniqueness}. Let $(t_n)_{n \in \N}$ be any sequence in $\R$. First, assume that $\gamma^{{\bf i}t_n} \to 1$ for every $\gamma \in \Sd(\mathcal R)$. Since $\Range(\delta_\nu) = \Sd(\mathcal R)$, we then have $\delta_\nu^{{\bf i} t_n} \to \mathbf 1$ in $\rZ^1(\mathcal R)$ for the convergence in measure. Conversely, assume that $t_n \to 0$ with respect to $\tau(\mathcal R)$. Then there exists a sequence $u_n \in \rL^0(X, \mathbf T)$ such that $(\partial u_n) \delta_\nu^{{\bf i} t_n} \to \mathbf 1$ in $\rZ^1(\mathcal R)$ for the convergence in measure. Then we have that $\partial u_n \to \mathbf 1$ in $\rZ^1(\mathcal R_\nu)$ for the convergence in measure. Since $\mathcal R_\nu$ is strongly ergodic by Theorem \ref{theorem: ergodic}, there exists 
a sequence $z_n \in \mathbf T$ such that $z_n u_n \to \mathbf 1$ in $\rL^0(X, \mathbf T)$ for the convergence in measure. This implies that $\delta_\nu^{{\bf i} t_n} \to \mathbf 1$ in $\rZ^1(\mathcal R)$ for the convergence in measure. Since $\Sd(\mathcal R) = \Range(\delta_\nu)$, this further implies that $\gamma^{{\bf i}t_n} \to 1$ for every $\gamma \in \Sd(\mathcal R)$.

\end{proof}


\section{Explicit computations of $\Sd$ and $\tau$ invariants}\label{section:computations}

\subsection{Equivalence relations arising from actions of bi-exact groups}

Following \cite[Definition 15.1.2]{BO08}, we say that a discrete group $\Gamma$ is {\em bi-exact} if $\Gamma$ is exact and if there exists a map $\mu : \Gamma \to \Prob(\Gamma)$ such that $\lim_{x \to \infty} \|\mu(gxh) - g_\ast \mu(x)\| = 0$ for all $g, h \in \Gamma$. The class of bi-exact discrete groups includes amenable groups, free groups \cite{AO74}, discrete subgroups of simple connected Lie groups of real rank one \cite{Sk88}, Gromov word-hyperbolic groups \cite{Oz03}, wreath product groups $H \wr \Lambda$ where $H$ is amenable and $\Lambda$ is bi-exact \cite{Oz04} and the group $\Z^2 \rtimes \SL_2(\Z)$ \cite{Oz08}.  We refer the reader to \cite[Chapter 15]{BO08} for more information on bi-exact discrete groups.

It was shown in \cite[Theorem C]{HI15} that for any bi-exact countable discrete group $\Gamma$ and  any strongly ergodic free nonsingular action on a standard measure space $\Gamma \curvearrowright (X, \mu)$, the group measure space factor $\rL^\infty(X) \rtimes \Gamma$ is full. The following rigidity result shows that in that case, the Sd and $\tau$ invariants of the orbit equivalence relation $\mathcal R(\Gamma \curvearrowright X)$ coincide with those of the corresponding factor $\rL(\mathcal R(\Gamma \curvearrowright X)) = \rL^\infty(X) \rtimes \Gamma$.

\begin{theorem}[Theorem \ref{thmstar:bi-exact}]
Let $\Gamma$ be any bi-exact countable discrete group and $\Gamma \curvearrowright X$ any strongly ergodic free action on a standard measure space. Then $\rL(\mathcal R(\Gamma \curvearrowright X))$ is a full factor and $$\tau \left(\mathcal R(\Gamma \curvearrowright X) \right) = \tau \left(\rL(\mathcal R(\Gamma \curvearrowright X)) \right).$$
If moreover $\mathcal R(\Gamma \curvearrowright X)$ is almost periodic, then $\rL(\mathcal R(\Gamma \curvearrowright X))$ is almost periodic and 
$$\Sd \left(\mathcal R(\Gamma \curvearrowright X) \right) = \Sd \left(\rL(\mathcal R(\Gamma \curvearrowright X)) \right).$$
\end{theorem}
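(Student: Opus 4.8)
The plan is to pass to the group measure space factor $M=\rL(\mathcal R(\Gamma\curvearrowright X))=\rL^\infty(X)\rtimes\Gamma$ with its Cartan subalgebra $A=\rL^\infty(X)$, and to reduce the equality of $\tau$-invariants to a rigidity property of the inclusion $A\subseteq M$ supplied by bi-exactness. Fix $\mu$ in the measure class and let $\varphi$ be the natural weight on $M$; its modular automorphism $\sigma^\varphi_t$ fixes $A$ pointwise and, under the topological isomorphism $\rZ^1(\mathcal R)\cong\Aut(M/A)$ of Lemma \ref{lemma: link}, corresponds to the Radon--Nikodym cocycle $\delta_\mu^{\mathbf i t}$. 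Hence $t\mapsto[\sigma^\varphi_t]\in\Out(M)$ factors as $t\mapsto[\delta_\mu^{\mathbf i t}]\in\rH^1(\mathcal R)$ followed by the natural homomorphism $\pi\colon\rH^1(\mathcal R)=\Aut(M/A)/\Ad(\mathcal U(A))\to\Out(M)=\Aut(M)/\operatorname{Inn}(M)$. By \cite[Theorem C]{HI15} the factor $M$ is full, so $\Out(M)$ is Polish, and by strong ergodicity $\rH^1(\mathcal R)$ is Polish; since both $\tau$-invariants are the pullbacks of $t\mapsto[\delta_\mu^{\mathbf i t}]$ and of its composite with $\pi$, it is enough to show that $\pi$ is a topological embedding.

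First I would record that $\pi$ is a continuous injective homomorphism. Continuity is clear because $\Aut(M/A)\hookrightarrow\Aut(M)$ is continuous and $\Ad(\mathcal U(A))\subseteq\operatorname{Inn}(M)$, and it yields at once the inclusion $\tau(M)\le\tau(\mathcal R)$. Injectivity uses that a Cartan subalgebra is maximal abelian: if $\Ad(c)=\Ad(v)$ for some $v\in\mathcal U(M)$, then $\Ad(c)$ fixing $A$ forces $v\in A'\cap M=A$, so $\Ad(c)\in\Ad(\mathcal U(A))$ and $[c]=1$ in $\rH^1(\mathcal R)$.

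The crux, and the main obstacle, is the reverse inclusion $\tau(\mathcal R)\le\tau(M)$, i.e.\ that $[\delta_\mu^{\mathbf i t_n}]\to1$ in $\rH^1(\mathcal R)$ whenever $[\sigma^\varphi_{t_n}]\to\operatorname{id}$ in $\Out(M)$. The latter provides unitaries $v_n\in\mathcal U(M)$ with $\Ad(v_n)\circ\sigma^\varphi_{t_n}\to\operatorname{id}$ in $\Aut(M)$, and since $\sigma^\varphi_{t_n}$ fixes $A$ the sequence $(v_n)$ asymptotically commutes with $A$. At this point I would invoke the relative spectral gap and asymptotic rigidity of the Cartan inclusion coming from bi-exactness \cite[Theorem A]{HI15}: it forces such a sequence to be \emph{asymptotically contained in $A$}. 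One then picks $w_n\in\mathcal U(A)$ with $v_n-w_n\to0$ $\ast$-strongly, so that $\Ad(w_n)\circ\sigma^\varphi_{t_n}\to\operatorname{id}$ as well. As $\Ad(w_n)$ corresponds to a coboundary and $\sigma^\varphi_{t_n}$ to $\delta_\mu^{\mathbf i t_n}$, this is a convergence inside $\Aut(M/A)$, which the topological isomorphism of Lemma \ref{lemma: link} converts into convergence of cocycles in the measure topology, namely $[\delta_\mu^{\mathbf i t_n}]\to1$ in $\rH^1(\mathcal R)$. Transferring the implementing unitary from $M$ into the Cartan subalgebra $A$ is precisely where bi-exactness is essential, and I expect it to be the hard part.

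For the almost periodic statement I would derive the $\Sd$-equality from the $\tau$-equality. An almost periodic measure $\nu$ has $\delta_\nu$ a step function, so it yields an almost periodic weight on $M$ whose modular operator has pure point spectrum $\Range(\delta_\nu)$; thus $M$ is an almost periodic factor. By Theorem \ref{thmstar:almost-periodic}(ii), $t_n\to0$ in $\tau(\mathcal R)$ if and only if $\gamma^{\mathbf i t_n}\to1$ for all $\gamma\in\Sd(\mathcal R)$, while Connes' structure theorem for almost periodic full factors \cite{Co74} gives the same description of $\tau(M)$ with $\Sd(M)$ in place of $\Sd(\mathcal R)$. Since $\tau(M)=\tau(\mathcal R)$, the countable subgroups $\Sd(M)$ and $\Sd(\mathcal R)$ of $\R^+_0$ induce the same topology on $\R$; as a countable subgroup is recovered as the group of continuous characters of $\R$ endowed with that topology (via the embedding of $\R$ into the Pontryagin dual of the subgroup viewed as discrete), the two subgroups must coincide, giving $\Sd(\mathcal R)=\Sd(M)$.
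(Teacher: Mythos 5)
Your reduction of the $\tau$-equality to showing that the natural map $\rH^1(\mathcal R)\cong\Aut(M/A)/\Ad(\mathcal U(A))\to\Out(M)$ is a homeomorphism onto its image is a correct reformulation, and the easy inclusion (continuity, plus injectivity via $A'\cap M=A$) is fine. The gap is in the hard direction, exactly where you flagged it. From $\Ad(v_n)\circ\sigma^\varphi_{t_n}\to\id_M$ you only get that $(v_n)$ asymptotically commutes with $A$ (because $\sigma^\varphi_{t_n}$ fixes $A$ pointwise); the sequence is certainly \emph{not} asymptotically central in $M$, since $\Ad(v_n)$ is asymptotically equal to $\sigma^\varphi_{-t_n}$. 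But \cite[Theorem A]{HI15} is a statement about uniformly bounded sequences that are asymptotically central in the whole group measure space algebra (with no amenable direct summand); it says nothing about sequences that merely commute asymptotically with the Cartan subalgebra. And the implication ``asymptotically commutes with $A$ $\Rightarrow$ asymptotically contained in $A$'' is false for general Cartan inclusions: for the hyperfinite ${\rm II_1}$ factor $R=\bigotimes_{n}M_2(\C)$ with its diagonal Cartan $A$, the off-diagonal matrix unit in the $n$-th tensor factor asymptotically commutes with $A$ while having zero expectation onto $A$. So the step you identify as the crux is justified by a misapplication of the cited theorem, and as stated it does not go through.

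The missing idea in the paper's argument is the passage to the continuous core. By Takesaki's Lemma XII.6.14, the convergence $\Ad(u_n)\circ\sigma^\varphi_{t_n}\to\id_M$ implies $\Ad\bigl(u_n\lambda_\varphi(t_n)\bigr)\to\id_{\core(M)}$, so the sequence $u_n\lambda_\varphi(t_n)$ \emph{is} asymptotically central in $\core(M)=\rL^\infty(X\times\R)\rtimes\Gamma$, which is again a group measure space algebra of the bi-exact group $\Gamma$ and has no amenable direct summand because $M$ is a nonamenable factor. Now \cite[Theorem A]{HI15} applies in the core and yields $u_n\lambda_\varphi(t_n)-\rE_{\rL^\infty(X\times\R)}(u_n\lambda_\varphi(t_n))\to0$ $\ast$-strongly; since $\rL^\infty(X\times\R)=A\ovt\rL(\R)$ and $\lambda_\varphi(t_n)\in\mathcal U(\rL(\R))$, this descends to $u_n-\rE_A(u_n)\to0$, after which your conclusion (replace $u_n$ by unitaries in $A$ and invoke Lemma \ref{lemma: link}) is exactly the paper's. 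Your derivation of the ${\rm Sd}$-equality from the $\tau$-equality---via Connes' description of $\tau$ of an almost periodic full factor together with the fact that a countable subgroup of $\R^+_0$ is recovered from the precompact topology it induces on $\R$---is a legitimate alternative to the paper's direct identification of the point spectrum of $\Delta_\psi$ with $\Sd(M)$ via \cite[Lemma 4.8]{Co74}, but it is of course contingent on first closing the gap above.
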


\begin{proof}
Write $\mathcal R = \mathcal R(\Gamma \curvearrowright X)$, $A = \rL^\infty(X)$ and $M = \rL(\mathcal R)$. Denote by $\rE_A : M \to A$ the unique faithful normal conditional expectation and put $\varphi = \tau_\mu \circ \rE_A \in M_\ast$ where $\mu \in \mathcal{M}(X)$ is any probability measure. By \cite[Theorem C]{HI15}, $M = \rL^\infty(X) \rtimes \Gamma$ is a full factor. 

We first prove that $\tau(\mathcal R) = \tau(M)$. Let $(t_n)_n$ be any sequence in $\R$. It is clear that if $t_n \to 0$ with respect to $\tau(\mathcal R)$ then $t_n \to 0$ with respect to $\tau(M)$. Conversely, assume that $t_n \to 0$ with respect to $\tau(M)$. Then there exists a sequence of unitaries $(u_n)_n$ in $\mathcal U(M)$ such that $\Ad(u_n) \circ \sigma_{t_n}^\varphi \to \id_M$ with respect to the $u$-topology in $\Aut(M)$. Write $\core(M) = M \rtimes_{\sigma^\varphi} \R$ for the continuous core of $M$. Observe that $\core(M) = \rL^\infty(X \times \R) \rtimes \Gamma$ where $\Gamma \curvearrowright X \times \R$ is the Maharam extension of $\Gamma \curvearrowright X$. Denote by $\rE_{\rL^\infty(X \times \R)} : \core(M) \to \rL^\infty(X \times \R)$ the unique faithful normal conditional expectation. Denote by $(\lambda_\varphi(t))_{t \in \R}$ the canonical unitaries in $\core(M)$ implementing the modular flow $\sigma^\varphi$. By \cite[Lemma XII.6.14]{Ta03a}, we have that $\Ad(u_n \lambda_\varphi(t_n)) \to \id_{\core(M)}$ with respect to the $u$-topology in $\Aut(\core(M))$. Since $M$ is a nonamenable factor, $\core(M)$ has no amenable direct summand (see e.g.\ \cite[Proposition 2.8]{BHR12}). Then \cite[Theorem A]{HI15} implies that $\rE_{\rL^\infty(X \times \R)}(u_n \lambda_\varphi(t_n)) - u_n \lambda_\varphi(t_n) \to 0$ with respect to the $\ast$-strong topology. Since $\rL^\infty(X \times \R) = A \ovt \rL(\R)$ (with $\widehat \R = \R$) and $\lambda_\varphi(t_n) \in \mathcal U(\rL(\R))$ for every $n \in \N$, it follows that $\rE_{A}(u_n) - u_n \to 0$ with respect to the $\ast$-strong topology. We can then find a sequence of unitaries $(v_n)_n$ in $\mathcal U(A)$ such that $v_n - u_n \to 0$ with respect to the $\ast$-strong topology. This further implies that $\Ad(v_n) \circ \sigma_{t_n}^\varphi \to \id_M$ with respect to the $u$-topology in $\Aut(M)$. Since $\sigma_t^\varphi |_A = \id_A$ for every $t \in \R$, we infer that $t_n \to 0$ with respect to $\tau(\mathcal R)$ by Lemma \ref{lemma: link}. Therefore, we have $\tau(\mathcal R) = \tau(M)$. 

Next, assume that $\mathcal R$ is almost periodic. Then $M$ is almost periodic as well. It is clear that $\Sd(M) \subset \Sd(\mathcal R)$. By Theorem \ref{theorem: uniqueness}, choose a probability measure $\nu \in \mathcal M_{\ap}(X,\mathcal R)$ that is $\Sd(\mathcal R)$-almost periodic. Put $\psi = \tau_\nu \circ \rE_A \in M_\ast$. By Lemma \ref{lemma: equivalence}, $\mathcal R_\nu$ is ergodic and hence $M_\psi = \rL(\mathcal R_\nu)$ is a factor. Then \cite[Lemma 4.8]{Co74} implies that the point spectrum of $\Delta_\psi$ coincides with $\Sd(M)$. Since $\psi = \tau_\nu \circ \rE_A$ and since $\Range(\delta_\nu) = \Sd(\mathcal R)$, it follows that the point spectrum of $\Delta_\psi$ is $\Sd(\mathcal R)$. Therefore, we have $\Sd(\mathcal R) = \Sd(M)$.
\end{proof}

\subsection{Generalized Bernoulli equivalence relations}

Let $\Lambda$ be any countable discrete group, $I$ any nonempty countable set and $\Lambda \curvearrowright I$ any action. Let $(Y, \eta)$ be any nontrivial standard probability space and $\mathcal S$ any equivalence relation defined on $(Y, \eta)$. Define the product standard probability space $(X, \mu) = (Y, \eta)^I$ and the product equivalence relation $\mathcal S^{\otimes I}$ on $(X, \mu)$ by
\begin{equation*}((x_i)_{i \in I}, (y_i)_{i \in I}) \in \mathcal S^{\otimes I} \quad \Leftrightarrow \quad \exists \mathcal F \subset I \text{ finite subset such that } 
\left\{ 
{\begin{array}{l} \forall i \in \mathcal  F, \quad (x_i, y_i) \in \mathcal S \\ 
\forall i \in I \setminus \mathcal F, \quad x_i = y_i.
\end{array}} \right.
\end{equation*}
Define the {\em generalized Bernoulli action} $\Lambda \curvearrowright X$ by 
$\lambda \cdot (x_i)_{i \in I} = (x_{\lambda^{-1}i })_{i \in I}$ and denote by $\mathcal R(\Lambda \curvearrowright X)$ the corresponding orbit equivalence relation.

\begin{definition}
We define the {\em generalized Bernoulli equivalence relation} $\mathcal R = \mathcal S^{\otimes I} \rtimes  \Lambda$ on $(X,\mu)$ by 
$$((x_i)_{i \in I}, (y_i)_{i \in I}) \in \mathcal R \quad \Leftrightarrow \quad \exists \lambda \in \Lambda \text{ such that } (\lambda \cdot (x_i)_{i \in I}, (y_i)_{i \in I}) \in \mathcal S^{\otimes I}.$$
In other words, $\mathcal R$ is the equivalence relation generated by $\mathcal S^{\otimes I}$ and $\mathcal R(\Lambda \curvearrowright X)$.
\end{definition}

From now on, we assume that the action $\Lambda \curvearrowright I$ has no invariant mean. Then the generalized Bernoulli action $\Lambda \curvearrowright (X, \mu)$ has spectral gap and therefore is strongly ergodic (see e.g.\ \cite[Theorem 1.2]{KT06}). Since $\mathcal R(\Lambda \curvearrowright X) \subset \mathcal R_\mu \subset \mathcal R$, the generalized Bernoulli equivalence relation $\mathcal R$ is strongly ergodic as well as $\mathcal R_\mu := \ker(\delta_\mu)$.

In this subsection, we first show that almost periodic generalized Bernoulli equivalence relations can have prescribed Sd invariant. Let $\Gamma < \R^+_0$ be any nontrivial countable subgroup. Take an enumeration $\Gamma \cap (0, 1) = \{\gamma_n \mid n \geq 1 \}$. Consider $Y = \bigsqcup_{n \geq 1} \{0, 1\}$ and $\eta$ to be the probability measure on $Y$ defined by $\eta = \sum_{n \geq 1}^{\oplus} \frac{1}{2^n(1 + \gamma_n)} (\delta_{0}^{(n)} + \gamma_n \delta_{1}^{(n)})$. Denote by $\mathcal S_2$ the transitive equivalence relation defined on $\{0, 1\}$ and consider the equivalence relation $\mathcal S = \bigsqcup_{n \geq 1} \mathcal S_2^{(n)}$ on the standard measure space $(Y, \eta)$.

\begin{theorem}\label{theorem: Sd prescribed}
Keep the same setup as above. Then the generalized Bernoulli equivalence relation $\mathcal R = \mathcal S^{\otimes I} \rtimes \Lambda$ is almost periodic, strongly ergodic and $\Sd(\mathcal R) = \Gamma$.
\end{theorem}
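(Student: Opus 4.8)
The plan is to first compute the Radon--Nikodym cocycle $\delta_\mu$ of $\mathcal R$ with respect to $\mu = \eta^{\otimes I}$ and to show that it takes values in $\Gamma$. On the $n$-th copy $\{0,1\}^{(n)}$ of $Y$ one has $\eta(\{1^{(n)}\})/\eta(\{0^{(n)}\}) = \gamma_n$, so the nontrivial element of $\mathcal S_2^{(n)}$ has modulus $\gamma_n^{\pm 1}$; hence $\delta_\eta$ takes values in $\{\gamma_n^{\pm 1} \mid n \geq 1\} \cup \{1\}$, and since every element of $\Gamma$ is either $1$, some $\gamma_n \in \Gamma \cap (0,1)$, or the inverse of such a $\gamma_m$, this set generates exactly $\Gamma$. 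For the product relation $\mathcal S^{\otimes I}$ the modulus of a move changing finitely many coordinates is the corresponding finite product, hence again lies in $\Gamma$. Since $\Lambda \curvearrowright X$ merely permutes identical factors $(Y,\eta)$ it preserves $\mu$, so $\mathcal R(\Lambda \curvearrowright X)$ has trivial modulus; as $\mathcal R$ is generated by $\mathcal S^{\otimes I}$ and $\mathcal R(\Lambda \curvearrowright X)$, the cocycle identity yields $\Range(\delta_\mu) \subseteq \Gamma$. In particular $\delta_\mu$ is a step function, so $\mu$ is $\Gamma$-almost periodic and $\mathcal R$ is almost periodic; moreover $\Sd(\mathcal R) \subseteq \Range(\delta_\mu) \subseteq \Gamma$. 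Strong ergodicity of $\mathcal R$, and of $\mathcal R_\mu = \ker(\delta_\mu) \supseteq \mathcal R(\Lambda \curvearrowright X)$, was already recorded above, coming from the spectral gap of the Bernoulli action.

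For the reverse inclusion I would invoke Theorem \ref{theorem: Sd invariant}, which applied to the single almost periodic measure $\mu$ gives $\Sd(\mathcal R) = \bigcap_{U} \Range(\delta_\mu|_{\mathcal R_U})$ over all nonzero $U \in \mathfrak P(X)$. Thus it suffices to show $\Gamma \subseteq \Range(\delta_\mu|_{\mathcal R_U})$ for every nonzero $U$; equivalently, for each $\gamma \in \Gamma$ I must produce a nonzero $\theta \in [[\mathcal R_U]]$ with $\delta_\mu(\theta(x),x) = \gamma$ for a.e.\ $x \in \dom(\theta)$. The crucial auxiliary fact here is that $\mathcal R_\mu$ is ergodic, which holds because it contains the ergodic relation $\mathcal R(\Lambda \curvearrowright X)$.

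Fix a nonzero $U$ and $\gamma \in \Gamma$, and write $\gamma = \gamma_{n_1}^{\epsilon_1}\cdots \gamma_{n_k}^{\epsilon_k}$. Since $\Lambda \curvearrowright I$ has no invariant mean, $I$ is infinite, so I may choose distinct coordinates $i_1, \dots, i_k \in I$ and let $\tau \in [[\mathcal S^{\otimes I}]]$ flip coordinate $i_j$ inside the copy $\{0,1\}^{(n_j)}$; on the nonzero cylinder set $V = \{x \mid x_{i_j} = a_j^{(n_j)} \text{ for } 1 \le j \le k\}$, with each $a_j \in \{0,1\}$ chosen according to $\epsilon_j$, one has $\delta_\mu(\tau(x),x) = \gamma$. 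Using ergodicity of $\mathcal R_\mu$ I then choose a nonzero $\psi_1 \in [[\mathcal R_\mu]]$ with $\dom(\psi_1) \subseteq U$ and $\ran(\psi_1) \subseteq V$, and a nonzero $\psi_2 \in [[\mathcal R_\mu]]$ with $\dom(\psi_2) \subseteq \tau(\ran(\psi_1))$ and $\ran(\psi_2) \subseteq U$. After restricting to make the composition well defined, $\theta = \psi_2 \circ \tau \circ \psi_1$ is a nonzero element of $[[\mathcal R_U]]$, and since $\psi_1,\psi_2$ have modulus $1$, the cocycle identity gives $\delta_\mu(\theta(x),x) = \gamma$. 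This yields $\gamma \in \Range(\delta_\mu|_{\mathcal R_U})$, completing the reverse inclusion and hence $\Sd(\mathcal R) = \Gamma$. I expect this last construction to be the main obstacle: one must match an abstract group element $\gamma \in \Gamma$ to an honest partial isomorphism supported in an arbitrary $U$, which forces one to combine the concrete coordinate flips of $\mathcal S^{\otimes I}$ (which carry the modulus) with sandwiching moves in the ergodic, measure-preserving relation $\mathcal R_\mu$ (which relocate the support into $U$ without altering the modulus).
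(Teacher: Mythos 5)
Your proof is correct and follows essentially the same route as the paper: the paper simply observes that $\mu$ is almost periodic with $\Range(\delta_\mu)=\Gamma$ and that $\mathcal R_\mu$ is (strongly) ergodic, and then cites Lemma \ref{lemma: equivalence}, whose implication $(\rm ii)\Rightarrow(\rm i)$ is exactly your reverse-inclusion argument (Theorem \ref{theorem: Sd invariant} combined with sandwiching a modulus-$\gamma$ partial isomorphism between elements of $[[\mathcal R_\mu]]$ to relocate its support into an arbitrary $U$). The only cosmetic difference is that you build the witness for $\gamma\in\Range(\delta_\mu|_{\mathcal R_U})$ explicitly as a coordinate flip, whereas the lemma takes an abstract $\phi\in[[\mathcal R]]$ realizing $\gamma$.
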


\begin{proof}
It is clear that the product measure $\mu$ is almost periodic for $\mathcal R$ and that $\Range(\delta_\mu) = \Gamma$. Since $\mathcal R_\mu $ is strongly ergodic, Lemma \ref{lemma: equivalence} shows that $\Sd(\mathcal R) = \Range(\delta_\mu) = \Gamma$.
\end{proof}

We next show that  generalized Bernoulli equivalence relations can have prescribed $\tau$ invariant. Following \cite[Section 5]{Co74}, let $\xi$ be any nonzero finite Borel measure on $\R^+_0$ with a finite first moment, that is, $\int_{\R^+_0} x \, \rd \xi(x) < +\infty$. We normalize $\xi$ so that $\int_{\R^+_0} (1 +  x) \, \rd \xi(x) = 1$. Consider the unitary representation $\rho_\xi : \R \to \mathcal U(\rL^2(\R^+_0, \xi)) $ defined by
$$ (\rho_\xi(t)f)(x)=x^{{\mathbf i}t} f(x) $$
for all $t \in \R$, $x \in \R^{+}_0$ and $f \in \rL^2(\R^+_0, \xi)$. Denote by $\tau(\xi)$ the weakest topology on $\R$ that makes $\rho_\xi$ continuous. Consider $Y = \R^+_0 \times \{0, 1\}$ and $\eta$ the unique probability measure on $Y$ that satisfies
$$ \int_Y f(x, i) \, \rd \eta(x, i) = \int_{\R^+_0} f(x, 0) \, \rd \xi(x) + \int_{\R^+_0} x f(x, 1) \, \rd \xi(x)$$
for any nonnegative Borel function $f : Y \to \R^+$. Denote by $\mathcal S$ the equivalence relation defined on $(Y, \eta)$ by
$$((x_1, i_1), (x_2, i_2)) \in \mathcal S \quad \Leftrightarrow \quad x_1 = x_2.$$

\begin{theorem}
Keep the same setup as above. Then the generalized Bernoulli equivalence relation $\mathcal R = \mathcal S^{\otimes I} \rtimes \Lambda$ is strongly ergodic and $\tau(\mathcal R) = \tau(\xi)$.
\end{theorem}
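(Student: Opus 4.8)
The strong ergodicity of $\mathcal R$ has already been recorded above, so the whole content is the identity $\tau(\mathcal R) = \tau(\xi)$. The plan is to unwind both topologies into concrete convergence statements and compare them. On the one hand, since the constant function $\mathbf 1$ is cyclic for $\rho_\xi$ and the operators $\rho_\xi(t)$ are uniformly bounded, a sequence $t_n$ converges to $0$ for $\tau(\xi)$ if and only if $\|\rho_\xi(t_n)\mathbf 1 - \mathbf 1\|_2^2 = \int_{\R^+_0}|x^{\mathbf i t_n}-1|^2\,\rd\xi(x) \to 0$, i.e.\ if and only if $x^{\mathbf i t_n} \to 1$ in $\xi$-measure. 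On the other hand, since $\mathcal R$ is strongly ergodic, $\rH^1(\mathcal R)$ is a Hausdorff Polish group and $\rB^1(\mathcal R)$ is closed in $\rZ^1(\mathcal R)$; hence $t_n \to 0$ for $\tau(\mathcal R)$ if and only if there is a sequence $u_n \in \rL^0(X,\T)$ with $(\partial u_n)\,\delta_\mu^{\mathbf i t_n} \to \mathbf 1$ in $\rZ^1(\mathcal R)$. I would then compute $\delta_\mu$: writing $a_i : X \to \R^+_0$ and $b_i : X \to \{0,1\}$ for the two coordinates of the $i$-th factor, the $\Lambda$-part of $\mathcal R$ preserves $\mu$ and so contributes nothing, while a single label-flip $\sigma_i \in [\mathcal R]$ (the involution sending $b_i \mapsto 1-b_i$ and fixing all other coordinates) has modulus $\delta_\mu(\sigma_i x, x) = a_i(x)^{\pm 1}$. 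More generally every $\theta \in [\mathcal R]$ decomposes into countably many pieces on each of which $\delta_\mu(\theta(x),x)$ is a \emph{finite} product of terms $a_j(x)^{\pm 1}$. Finally I would record that the law of $a_i$ under $\mu$ is $(1+a)\,\rd\xi(a)$, which is equivalent to $\xi$, and that the conditional law of $a_i$ on $\{b_i = 0\}$ is a normalization of $\xi$.

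For the implication $\tau(\xi) \Rightarrow \tau(\mathcal R)$ I would prove the stronger statement that $\delta_\mu^{\mathbf i t_n} \to \mathbf 1$ already in $\rZ^1(\mathcal R)$, so that one may take $u_n = \mathbf 1$. If $x^{\mathbf i t_n} \to 1$ in $\xi$-measure then, since $\xi \sim (1+a)\,\rd\xi$, we get $a_i^{\mathbf i t_n} \to 1$ in $\mu$-measure for every $i$. Given $\theta \in [\mathcal R]$, partition $\dom(\theta)$ into the countably many pieces described above; on each piece $\delta_\mu^{\mathbf i t_n}(\theta)$ is a fixed finite product of the $a_j^{\pm \mathbf i t_n}$, hence tends to $1$ in measure, and a dominated-convergence argument over the partition upgrades this to convergence in measure on all of $\dom(\theta)$. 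As this holds for every $\theta$, we obtain $\delta_\mu^{\mathbf i t_n} \to \mathbf 1$ in $\rZ^1(\mathcal R)$, whence $t_n \to 0$ for $\tau(\mathcal R)$.

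The converse $\tau(\mathcal R) \Rightarrow \tau(\xi)$ is where the real work lies, and the difficulty is precisely to control the coboundaries $u_n$. Starting from $(\partial u_n)\,\delta_\mu^{\mathbf i t_n} \to \mathbf 1$ in $\rZ^1(\mathcal R)$, I would first restrict to the $\mu$-preserving part of $\mathcal R$ coming from $\Lambda$: since the modulus is trivial there, this says exactly that $u_n \circ \lambda^{-1} - u_n \to 0$ in measure for every $\lambda \in \Lambda$, i.e.\ $(u_n)_n$ is an almost $\Lambda$-invariant sequence in the unit ball of $\rL^\infty(X)$. Because $\Lambda \curvearrowright (X,\mu)$ is a strongly ergodic pmp action (it has spectral gap, as recorded above), the Koopman spectral gap forces $\|u_n - \mu(u_n)\|_2 \to 0$; normalizing the scalars into $\T$ and absorbing them, which does not change $\partial u_n$, I may assume $u_n \to \mathbf 1$ in measure. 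Now restrict the cocycle $(\partial u_n)\,\delta_\mu^{\mathbf i t_n}$ to the graph of the flip $\sigma_i$: the resulting sequence $u_n(\sigma_i x)\,u_n(x)^{-1}\,a_i(x)^{\pm \mathbf i t_n}$ tends to $1$ in measure, and since $\sigma_i$ is nonsingular and $u_n \to \mathbf 1$ in measure we also have $u_n \circ \sigma_i \to \mathbf 1$ in measure, so the coboundary factor disappears and $a_i^{\pm \mathbf i t_n} \to 1$ in $\mu$-measure. Restricting to $\{b_i = 0\}$, on which the law of $a_i$ is proportional to $\xi$, yields $x^{\mathbf i t_n} \to 1$ in $\xi$-measure, i.e.\ $t_n \to 0$ for $\tau(\xi)$.

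I expect the main obstacle to be this last direction, and more precisely the reduction of the a priori arbitrary coboundaries $u_n$ to constants. The clean way around it is to feed the triviality of the modulus on the measure-preserving $\Lambda$-part into the spectral gap of the Bernoulli action, after which the single-coordinate flip $\sigma_i$ isolates exactly the multiplier $x^{\mathbf i t}$ that governs $\tau(\xi)$.
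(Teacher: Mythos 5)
Your proposal is correct and follows essentially the same route as the paper: the easy direction shows $\delta_\mu^{\mathbf{i}t_n}\to\mathbf 1$ already in $\rZ^1(\mathcal R)$, and the converse trivializes the coboundaries $u_n$ by restricting to a measure-preserving strongly ergodic subrelation before reading off the multiplier $x^{\mathbf{i}t_n}$ from the modulus. The only (harmless) difference is that you invoke the spectral gap of the Bernoulli subaction $\mathcal R(\Lambda\curvearrowright X)$ and the single flip $\sigma_i$ explicitly, where the paper uses the strong ergodicity of $\mathcal R_\mu=\ker(\delta_\mu)$ (itself deduced from that same subaction) and leaves the identification of the resulting $\rZ^1$-topology with $\tau(\xi)$ implicit.
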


\begin{proof}
For every $x \in \R^+_0$, we have $\delta_\eta((x, 1), (x, 0)) = x$. This implies that the weakest topology on $\mathbf R$ that makes the map $\R \to \rZ^1(\mathcal S) : t \mapsto \delta_{\eta}^{{\mathbf i}t}$ continuous is $\tau(\xi)$. By construction of $\mathcal R = \mathcal S^{\otimes I} \rtimes \Lambda$, it follows that the weakest topology on $\mathbf R$ that makes the map $\R \to \rZ^1(\mathcal R) : t \mapsto \delta_{\mu}^{{\mathbf i}t}$ continuous is $\tau(\xi)$ as well. In order to prove that $\tau(\mathcal R) = \tau(\xi)$, it suffices to show that for any sequence $(t_n)_n$  in $\R$ such that $t_n \to 0$ with respect to $\tau(\mathcal R)$, we have $\delta_\mu^{{\mathbf i}t_n} \to \mathbf 1$ in $\rZ^1(\mathcal R)$ for the convergence in measure. Assume that $t_n \to 0$ with respect to $\tau(\mathcal R)$. Then there exists a sequence $u_n \in \rL^0(X, \T)$ such that $(\partial u_n)  \delta_\mu^{{\mathbf i}t_n} \to \mathbf 1$ in $\rZ^1(\mathcal R)$ for the convergence in measure. In particular, we have $\partial u_n  \to \mathbf 1$ in $\rZ^1(\mathcal R_\mu)$ for the convergence in measure. Since $\mathcal R_\mu$ is strongly ergodic, there exists a sequence $z_n \in \T$ such that $ z_n c_n \to \mathbf 1$ in $\rL^0(X, \T)$ for the convergence in measure. This finally shows that $\delta_\mu^{{\mathbf i}t_n} \to \mathbf 1$ in $\rZ^1(\mathcal R)$ for the convergence in measure.
\end{proof}

Finally, we point out that we can construct examples of strongly ergodic generalized Bernoulli equivalences $\mathcal R$ with prescribed Sd and $\tau$ invariants for which the associated factor $\rL(\mathcal R)$ is not full. For this, we need the following group theoretic result.

\begin{proposition}\label{proposition: group-action}
Write $\Z_2 = \Z/2\Z$ and $\Z_3 = \Z/3\Z$. Put $\Gamma = \Z_2 \ast \Z_3$,  $\Lambda = \Gamma^{\oplus \N}$, $H = \Z_2^{\oplus \N}$ and $I = \Lambda /H$. Then for every $\lambda \in \Lambda \setminus \{1\}$, there are infinitely many $i \in I$ such that $\lambda \cdot i \neq i$ and the action $\Lambda \curvearrowright I$ has no invariant mean. 
\end{proposition}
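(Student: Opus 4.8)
The plan is to treat the two assertions separately, after first unwinding the structure of the $\Lambda$-set $I=\Lambda/H$. Writing $\Lambda=\bigoplus_n \Gamma_n$ with $\Gamma_n\cong\Gamma=\Z_2\ast\Z_3$ and $H=\bigoplus_n H_n$ with $H_n\cong\Z_2$ the $\Z_2$ free factor of $\Gamma_n$, I would first observe that since $H$ is itself a restricted direct sum, two elements $(\lambda_n)_n,(\lambda'_n)_n$ of $\Lambda$ lie in the same left $H$-coset if and only if $\lambda_nH_n=\lambda'_nH_n$ in $\Gamma_n/H_n$ for every $n$. This identifies $I$ with the restricted product $\{(c_n)_n\in\prod_n \Gamma_n/H_n : c_n=H_n \text{ for all but finitely many } n\}$, on which $\Lambda$ acts coordinatewise by left translation, the factor $\Gamma_n$ acting only on the $n$-th coordinate. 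The two recurring facts I will use about each factor are that $\Gamma=\Z_2\ast\Z_3\cong\mathrm{PSL}_2(\Z)$ contains a nonabelian free subgroup, hence is non-amenable, and that $[\Gamma:\Z_2]=\infty$, so each $\Gamma_n/H_n$ is infinite.

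For the first assertion, fix $\lambda=(\lambda_n)_n\neq 1$ and choose $n_0$ with $\lambda_{n_0}\neq 1$. The kernel of the left-translation action $\Gamma_{n_0}\curvearrowright\Gamma_{n_0}/H_{n_0}$ is the normal core $\bigcap_g gH_{n_0}g^{-1}$, which I will show is trivial: a nontrivial element of $H_{n_0}$ is the generator $a$ of $\Z_2$, and $a=bab^{-1}$ (with $b$ the generator of $\Z_3$) would force $ab=ba$, contradicting the normal form theorem in the free product. Hence $\lambda_{n_0}$ moves some coset $c^\ast=gH_{n_0}$. Every $i\in I$ whose $n_0$-th coordinate equals $c^\ast$ then satisfies $(\lambda\cdot i)_{n_0}=\lambda_{n_0}c^\ast\neq c^\ast=i_{n_0}$, so $\lambda\cdot i\neq i$; and there are infinitely many such $i$, since the $(n_0+1)$-th coordinate may be chosen arbitrarily in the infinite set $\Gamma_{n_0+1}/H_{n_0+1}$. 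This produces infinitely many points moved by $\lambda$.

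For the second assertion I argue by contradiction: suppose $m$ is a $\Lambda$-invariant mean on $I$. Fixing $n_0$, the mean $m$ is in particular invariant under the subgroup $\Gamma_{n_0}\leq\Lambda$. Using the coordinate decomposition, $\Gamma_{n_0}$ acts on $I=(\Gamma_{n_0}/H_{n_0})\times I'$, where $I'=\bigoplus_{n\neq n_0}\Gamma_n/H_n$, by left translation on the first factor and trivially on $I'$, so the coordinate projection $\pi:I\to\Gamma_{n_0}/H_{n_0}$ is $\Gamma_{n_0}$-equivariant. Pushing $m$ forward along $\pi$ produces a $\Gamma_{n_0}$-invariant mean $m_0$ on $\Gamma_{n_0}/H_{n_0}$. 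The contradiction now comes from the finiteness of $H_{n_0}$: the functional $\tilde m(f)=m_0\bigl(gH_{n_0}\mapsto \tfrac{1}{|H_{n_0}|}\sum_{h\in H_{n_0}}f(gh)\bigr)$ is a left-invariant mean on $\ell^\infty(\Gamma_{n_0})$, which would make $\Gamma_{n_0}$ amenable — impossible, as $\Gamma_{n_0}$ contains a nonabelian free subgroup. Hence no $\Lambda$-invariant mean on $I$ exists.

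The routine verifications — that pushforward of an invariant mean along an equivariant map is again an invariant mean, and that the averaging rule $f\mapsto \bar f$ is well defined and $\Gamma_{n_0}$-equivariant so that $\tilde m$ is genuinely invariant — I would only sketch. The one place requiring conceptual care, and the main step of the argument, is the reduction in the second assertion: recognizing that it suffices to contradict amenability of a \emph{single} factor, and that an invariant mean for the coordinatewise product action pushes forward to an invariant mean for left translation on one coset space $\Gamma_{n_0}/H_{n_0}$. Once this reduction is in place, the finite-subgroup averaging trick together with the non-amenability of $\Z_2\ast\Z_3$ closes the argument.
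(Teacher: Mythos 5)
Your proof is correct. For the first assertion your route is essentially the paper's: both arguments reduce to showing that a nontrivial component $\lambda_{n_0}$ moves at least one coset of $\Gamma_{n_0}/H_{n_0}$ and then produce infinitely many moved points of $I$ from the remaining coordinates; the paper justifies the key step by observing that the (infinite) conjugacy class of $\lambda_{n_0}$ cannot be contained in the finite group $H_{n_0}$, whereas you show directly that the normal core of $\Z_2$ in $\Z_2\ast\Z_3$ is trivial --- two equivalent ways of saying that $\Gamma_{n_0}$ acts faithfully on $\Gamma_{n_0}/H_{n_0}$. For the second assertion your argument genuinely differs in presentation: the paper disposes of it in one sentence by invoking the general principle that a coset space $\Lambda/H$ with $H$ amenable and $\Lambda$ nonamenable admits no $\Lambda$-invariant mean, while you give a self-contained proof of the special case you need, pushing the mean forward along the $\Gamma_{n_0}$-equivariant coordinate projection onto a single factor $\Gamma_{n_0}/H_{n_0}$ and then averaging over the \emph{finite} subgroup $H_{n_0}\cong\Z_2$ to manufacture an invariant mean on $\ell^\infty(\Gamma_{n_0})$, contradicting nonamenability of $\Z_2\ast\Z_3$. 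What your version buys is that the only input is the elementary finite-averaging trick plus the existence of a free subgroup in $\Z_2\ast\Z_3$, rather than the (standard but less elementary) fact that amenability of $H$ together with amenability of the action on $\Lambda/H$ implies amenability of $\Lambda$ for a general amenable $H$; the paper's version is shorter because it quotes that fact for the infinite amenable group $H=\Z_2^{\oplus\N}$ directly. Both are complete and correct.
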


\begin{proof}
Observe that for every $g \in \Gamma \setminus \{1\}$, the conjugacy class $\{\gamma g \gamma^{-1} : \gamma \in \Gamma\}$ is infinite. This implies that for every $\lambda \in \Lambda \setminus \{1\}$, there are infinitely many $i \in I$ such that $\lambda \cdot i \neq i$. Since $\Lambda$ is nonamenable and $H$ is amenable, the action $\Lambda \curvearrowright I$ has no invariant mean.
\end{proof}

\begin{corollary}
Let $\Lambda \curvearrowright I$ be any action as in Proposition \ref{proposition: group-action}. Then for any nontrivial standard probability space $(Y, \eta)$, the generalized Bernoulli equivalence relation $\mathcal R = \mathcal S^{\otimes I} \rtimes \Lambda$ is strongly ergodic and the corresponding factor $\rL(\mathcal R)$ is not full.
\end{corollary}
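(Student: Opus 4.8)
My plan is to get strong ergodicity for free from the standing hypotheses of the subsection, and to prove non-fullness by exhibiting an explicit nontrivial centralizing sequence that, crucially, sits inside the centralizer of the natural state. By Proposition \ref{proposition: group-action} the action $\Lambda \curvearrowright I$ has no invariant mean, so the generalized Bernoulli action $\Lambda \curvearrowright (X,\mu)$ has spectral gap and is strongly ergodic, and since $\mathcal R(\Lambda \curvearrowright X) \subset \mathcal R$ the relation $\mathcal R$ is strongly ergodic as recalled at the start of this subsection. In particular $\mathcal R$ is ergodic, so $M := \rL(\mathcal R)$ is a factor with separable predual, and to see that $M$ is not full it suffices to produce a bounded centralizing sequence that is not trivial. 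I would take $s_n \in \Lambda$ to be the generator of the $n$-th copy $(\Z_2)_n \subset \Gamma_n$ of $\Z_2$ inside $H = \Z_2^{\oplus \N}$, view it as an element of $[\mathcal R]$ through the (measure-preserving) Bernoulli action, and work with the associated unitary $u_{s_n} \in M$; note $u_{s_n}^* = u_{s_n}$ since $s_n^2 = 1$. Fixing $\varphi = \tau_\mu \circ \rE_{\rL^\infty(X)}$, the $\Lambda$-invariance of $\mu = \eta^{\otimes I}$ gives $\delta_\mu(s_n) = 1$, hence $\sigma^\varphi_t(u_{s_n}) = \delta_\mu(s_n)^{\mathbf i t} u_{s_n} = u_{s_n}$ by the description of $\Delta_\varphi$ in Subsection \ref{subsection: von Neumann}, so that $u_{s_n} \in M_\varphi$.

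Next I would record three elementary facts coming from the structure $\Lambda = \Gamma^{\oplus \N}$, $H = \Z_2^{\oplus \N}$, $I = \Lambda/H$ together with Proposition \ref{proposition: group-action}: (a) for a fixed $\mu_0 \in \Lambda$ one has $s_n \mu_0 = \mu_0 s_n$ once $n$ exceeds the support of $\mu_0$; (b) for a fixed coset $i = \mu_0 H$ one has $s_n \cdot i = i$ once $n$ exceeds the support of $\mu_0$, because then $\mu_0$ has trivial $n$-th coordinate and $s_n \in (\Z_2)_n \subset H$; and (c) since $s_n \neq 1$ moves infinitely many cosets by Proposition \ref{proposition: group-action}, its fixed-point set in $X = Y^I$ is $\mu$-null (here one uses that $(Y,\eta)$ is nontrivial), whence $\rE_{\rL^\infty(X)}(u_{s_n}) = \mathbf 1_{\Fix(s_n)} = 0$ and $\varphi(u_{s_n}) = 0$.

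Using (a) and (b), I would show that $u_{s_n}$ eventually commutes with the $\ast$-strongly dense $\ast$-subalgebra $M_0 \subset M$ spanned by the elements $v\, u_{\mu_0}$, where $\mu_0 \in \Lambda$ and $v \in \rL(\mathcal S^{\otimes I})$ depends only on finitely many coordinates: indeed $u_{s_n}(v u_{\mu_0}) u_{s_n}^* = (s_n \cdot v)\, u_{s_n \mu_0 s_n^{-1}} = v u_{\mu_0}$ for $n$ large, since the shift by $s_n$ then fixes the support of $v$ pointwise and $s_n$ commutes with $\mu_0$. Hence $[u_{s_n}, a] = 0$ for each $a \in M_0$ once $n \gg 1$. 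Because $u_{s_n}$ is a unitary in $M_\varphi$, both left and right multiplication by $u_{s_n}$ preserve $\|\cdot\|_\varphi$ (for the right action one uses $\varphi(u_{s_n}^* z u_{s_n}) = \varphi(z)$, valid on $M_\varphi$); so the estimate $\|[u_{s_n}, a]\|_\varphi \leq 2\|a - a_0\|_\varphi + \|[u_{s_n}, a_0]\|_\varphi$ with $a_0 \in M_0$ yields $\lim_n \|[u_{s_n}, a]\|_\varphi = 0$ for every $a \in M$ by density of $M_0$.

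The hard part will be upgrading this $L^2$-asymptotic commutation to the genuine centralizing condition $\lim_n \|u_{s_n}\psi - \psi u_{s_n}\| = 0$ for all $\psi \in M_\ast$, and this is exactly where the type ${\rm III}$ nature is delicate and where the inclusion $u_{s_n} \in M_\varphi$ is decisive. I would check it on the dense family of functionals $\psi = \varphi(b^\ast \,\cdot\, a)$ with $a,b \in M$: expanding $u_{s_n}\psi - \psi u_{s_n}$ and inserting the intermediate term $\varphi(b^\ast y a u_{s_n})$, the centralizer identity $\varphi(\,\cdot\, u_{s_n}) = \varphi(u_{s_n}\,\cdot\,)$ together with $u_{s_n}^\ast = u_{s_n}$ reduces the value at $y$ to $\varphi(b^\ast y [u_{s_n}, a]) + \varphi([u_{s_n}, b^\ast] y a)$, which for $\|y\| \leq 1$ is bounded by $\|b\|_\varphi \|[u_{s_n}, a]\|_\varphi + \|a\|_\varphi \|[u_{s_n}, b]\|_\varphi \to 0$ by the previous step. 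Thus $(u_{s_n})_n$ is centralizing; and by (c) we have $\|u_{s_n} - c\|_\varphi^2 = 1 + |c|^2 \geq 1$ for every $c \in \C$, so it is not trivial, and therefore $M = \rL(\mathcal R)$ is not full. The whole point of choosing $s_n$ in $H$ and exploiting the $\Lambda$-invariance of the Bernoulli measure is to place $u_{s_n}$ in $M_\varphi$, which is what makes this final type ${\rm III}$ upgrade go through.
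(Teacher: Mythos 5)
Your proof is correct, and it uses the same witness as the paper: the sequence $(u_{h_n})_n$ attached to the generators $h_n=s_n$ of the successive $\Z_2$ summands of $H$. The difference is in how non-fullness is extracted from this sequence. The paper simply records that $(h_n)_n$ is eventually central in $\Lambda$ and eventually lies in $\Stab(i)$ for every $i\in I$, and then cites \cite[Lemma 2.7]{VV14} as a black box. You instead reprove the relevant special case of that lemma from scratch: eventual exact commutation with the dense $\ast$-subalgebra spanned by the $v\,u_{\mu_0}$, the $\|\cdot\|_\varphi$-approximation to get asymptotic commutation on all of $M$, and then the upgrade from $\rL^2$-asymptotic commutation to the genuine centralizing condition on $M_\ast$ via the identity $\varphi(\,\cdot\,u_{h_n})=\varphi(u_{h_n}\,\cdot\,)$. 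Your observation that the whole type ${\rm III}$ difficulty is defused by placing $u_{h_n}$ in the centralizer $M_\varphi$ (which holds because the Bernoulli action preserves the product measure, so $\delta_\mu(h_n)=\mathbf 1$) is exactly the mechanism hiding inside the cited lemma, and your nontriviality check $\|u_{h_n}-c\|_\varphi^2=1+|c|^2$ via $\rE_{\rL^\infty(X)}(u_{h_n})=0$ is clean. What the paper's route buys is brevity; what yours buys is a self-contained argument that makes the role of measure-preservation of the $\Lambda$-action explicit. The only point I would make more explicit is that the crossed-product picture $\rL(\mathcal R)=\rL(\mathcal S)^{\ovt I}\rtimes\Lambda$ (equivalently, the density of your $M_0$ and the formula $u_{h_n}(v\,u_{\mu_0})u_{h_n}^\ast=(h_n\cdot v)\,u_{h_n\mu_0 h_n^{-1}}$) rests on the essential freeness of $\Lambda\curvearrowright X$, which follows from Proposition \ref{proposition: group-action} together with the nontriviality of $(Y,\eta)$ by the same null-fixed-point computation as your fact (c); the paper states this step explicitly and you use it implicitly.
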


\begin{proof}
We already know that the generalized Bernoulli equivalence relation $\mathcal R = \mathcal S^{\otimes I} \rtimes \Lambda$ is strongly ergodic. Since for every $\lambda \in \Lambda \setminus \{1\}$, there are infinitely many $i \in I$ such that $\lambda \cdot i \neq i$, the action $\Lambda \curvearrowright X$ is essentially free. We then have the crossed product decomposition $\rL(\mathcal R) = \rL(\mathcal S)^I \rtimes \Lambda$. Write $\Z_2 = \{1, s\}$. For every $n \in \N$, let $h_n = 1 \oplus \cdots \oplus 1 \oplus s \in H$ where the element $s$ sits at the $n$th position. Then the sequence $(h_n)_n$ is eventually central in $\Lambda$. Thus,  the sequence $(u_{h_n})_n$ is a nontrivial uniformly bounded central sequence in the group von Neumann algebra $\rL(\Lambda)$ such that for every $i \in I$, we have $u_{h_n} \in \rL(\Stab(i))$ eventually. Then \cite[Lemma 2.7]{VV14} implies that $\rL(\mathcal R)$ is not full.
\end{proof}

\begin{remark}\label{remark: plain}
We point out that for plain Bernoulli equivalence relations $\mathcal R = \mathcal S^{\otimes \Lambda} \rtimes \Lambda$ arising from nonamenable groups $\Lambda$, the corresponding factor  $\rL(\mathcal R)$ is full and $\tau(\mathcal R) = \tau(\rL(\mathcal R))$ (see \cite[Lemma 2.7]{VV14}). If $\mathcal S$ is moreover as in Theorem \ref{theorem: Sd prescribed}, then we have $\Sd(\mathcal R) = \Sd(\rL(\mathcal R))$.
\end{remark}

\appendix

\section{Bounded elements in the full pseudo-group}

\begin{theorem} \label{bounded_typeIII}
Let $(X,\mu)$ be a standard probability space and $\mathcal{R} \subset X \times X$ an ergodic equivalence relation of type $\III_\lambda$ with $0 <\lambda \leq 1$. Then for any nonzero  subsets $A,B \in \mathfrak{P}(X)$, we can find a $\mu$-bounded partial isomorphism $\theta : A \rightarrow B$ in $[[\mathcal{R}]]$.  More precisely, if $0< C < \lambda$, we can choose $\theta$ so that for a.e.\ $x \in A$ we have
\[ C \frac{\mu(B)}{\mu(A)} \leq \frac{\rd (\mu \circ \theta)}{\rd \mu}(x) \leq C^{-1} \frac{\mu(B)}{\mu(A)}.  \]
\end{theorem}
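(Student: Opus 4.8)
The plan is to set $r=\mu(B)/\mu(A)$ and to produce $\theta\in[[\mathcal R]]$ with $\dom(\theta)=A$, $\ran(\theta)=B$ whose Jacobian $J_\theta:=\frac{\rd(\mu\circ\theta)}{\rd\mu}$ takes values in the window $[Cr,C^{-1}r]$; this is exactly the asserted bound. First note that, $\mathcal R$ being ergodic of type $\III$, the space $(X,\mu)$ is nonatomic: the set of atoms is $\mathcal R$-saturated (a partial isomorphism sends an atom to an atom, since $\mu(\theta(\{x\}))=\int_{\{x\}}J_\theta\,\rd\mu>0$), hence null by ergodicity, as a conull atomic space would make $\mathcal R$ of type $\I$. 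Observe also that any such $\theta$ automatically satisfies $\int_A J_\theta\,\rd\mu=\mu(B)$, so its average Jacobian is forced to equal $r$, comfortably inside the open window. The hypothesis $C<\lambda$ enters here: in logarithmic scale the window $(Cr,C^{-1}r)$ has half-width $\log(C^{-1})>\log(\lambda^{-1})$, which exceeds the gap $\log(\lambda^{-1})$ of the lattice $\lambda^{\Z}=\rS(\mathcal R)\cap\R^+_0$. Consequently the open window contains two ratio-set values $s_-<r<s_+$ in $\lambda^{\Z}$ straddling $r$; when $\lambda=1$ the ratio set is all of $\R^+_0$ and one simply picks $s_-<r<s_+$ (or even the constant value $r$).

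The key input is a local realization lemma coming from the description of the $\rS$-invariant as a ratio set: for $\mathcal R$ ergodic of type $\III_\lambda$, any value $s\in\lambda^{\Z}$ (any $s>0$ when $\lambda=1$), any nonzero $U,V\in\mathfrak P(X)$ and any $\varepsilon>0$, there is a nonzero $\psi\in[[\mathcal R]]$ with $\dom(\psi)\subset U$, $\ran(\psi)\subset V$ and $(1-\varepsilon)s\le J_\psi\le(1+\varepsilon)s$ on $\dom(\psi)$. This follows by combining $s\in\rS(\mathcal R)\subset\overline{\Range}(\delta_\mu|_{\mathcal R_U})$ with the ergodicity of $\mathcal R$ (used to move the realized set from $U$ into $V$). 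Applying this to $s_+$ and to $s_-$ over two disjoint sub-pieces of $U$ and $V$, and then shrinking one of the two domains while tracking the total range mass (which varies continuously by nonatomicity), I can, for $\varepsilon$ small enough that $(1+\varepsilon)s_-<r<(1-\varepsilon)s_+$, assemble a single $\psi$ with $\dom(\psi)\subset U$, $\ran(\psi)\subset V$, $J_\psi\in[Cr,C^{-1}r]$ and the \emph{exact} balance $\mu(\ran(\psi))=r\,\mu(\dom(\psi))$, for any prescribed small value of $\mu(\dom(\psi))$.

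With this building block I would run an exhaustion argument. Consider disjoint unions $\phi=\bigsqcup_k\psi_k$ of such balanced pieces with $\dom(\phi)\subset A$, $\ran(\phi)\subset B$, $J_\phi\in[Cr,C^{-1}r]$ and $\mu(B\setminus\ran(\phi))=r\,\mu(A\setminus\dom(\phi))$, ordered by disjoint extension. The supremum of an increasing chain again has these properties (the exact-balance identity passes to the limit), so by Zorn's lemma there is a maximal such $\phi$. At the maximum, writing $A_1=A\setminus\dom(\phi)$ and $B_1=B\setminus\ran(\phi)$, the balance identity gives $\mu(B_1)=r\,\mu(A_1)$; hence if $A_1$ were nonzero then $B_1$ would be nonzero as well, and the building block would supply one more disjoint balanced piece between $A_1$ and $B_1$, contradicting maximality. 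Therefore $A_1$ and $B_1$ are null, and $\theta:=\phi$ is the desired $\mu$-bounded partial isomorphism $A\to B$ with $Cr\le J_\theta\le C^{-1}r$.

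The main obstacle is precisely the bookkeeping that forces the exhaustion to consume $A$ and $B$ simultaneously: because the realizable Jacobian values are pinned to the lattice $\lambda^{\Z}$ rather than to the target ratio $r$, a naive maximal family could exhaust one side while leaving the other, and the role of the hypothesis $C<\lambda$ is exactly to guarantee a window wide enough to straddle $r$ and thus to maintain the exact mass balance $\mu(\ran)=r\,\mu(\dom)$ at every stage. A secondary technical point is deriving the two-set local realization lemma, together with the continuity of the range mass under shrinking, from the ratio-set description of $\rS(\mathcal R)$ and the nonatomicity of $(X,\mu)$.
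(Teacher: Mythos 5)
Your overall architecture matches the paper's: a local realization step based on the ratio-set description of $\rS(\mathcal R)$, followed by a maximality argument in $[[\mathcal R]]$. Where you genuinely differ is in the bookkeeping that forces the exhaustion to consume $A$ and $B$ simultaneously. You maintain an \emph{exact} balance $\mu(\ran(\psi))=r\,\mu(\dom(\psi))$ for each piece, obtained by an intermediate-value argument on two sub-pieces whose Jacobians straddle $r$. The paper instead keeps only the \emph{approximate} balance $c\,r\,\mu(A\setminus\dom(\theta))\le\mu(B\setminus\ran(\theta))\le c^{-1}r\,\mu(A\setminus\dom(\theta))$ for a fixed $c\in(\lambda^{-1}C,1)$; at a maximal element, whenever one inequality is strict it adjoins a small piece whose Jacobian sits in the upper part $[c^{-1}r,C^{-1}r]$ (resp.\ lower part $[Cr,cr]$) of the window, so both inequalities must be equalities, and $c\neq c^{-1}$ then forces both residuals to vanish. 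Your exact-balance variant is viable and arguably cleaner at the maximal step, at the cost of a more delicate building block; the paper's building block never has to balance anything.

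The one step that fails as written is your local realization lemma: for $\lambda<1$ you cannot prescribe a value $s\in\lambda^{\Z}$ for the Jacobian of a partial isomorphism from a piece of $U$ into $V$ for \emph{arbitrary} nonzero $U,V$. Your own derivation sketch exposes the problem: realizing $s$ inside $U$ and then transporting into $V$ by ergodicity multiplies the Jacobian by that of the connecting map, whose value $b$ you do not control. The set of Jacobian values realizable between $U$ and $V$ is only a union of cosets $b\lambda^{\Z}$, and replacing $\mu$ by $f\mu$ with $f\equiv 1$ on $U$ and $f\equiv\sqrt\lambda$ on $V$ shows it can miss $\lambda^{\Z}$ entirely. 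This is repairable, and the repair is exactly where $C<\lambda$ earns its keep: each of the open half-windows $(Cr,r)$ and $(r,C^{-1}r)$ has multiplicative length $C^{-1}>\lambda^{-1}$, hence contains a point of \emph{every} coset of $\lambda^{\Z}$, so a straddling pair $s_-<r<s_+$ of realizable values still exists. Equivalently, invoke the paper's Lemma \ref{bounded_small}, which produces a nonzero piece with Jacobian in any prescribed interval $[c_1,c_2]$ with $c_1<\lambda c_2$ between any two nonzero sets (by first shrinking the connecting map until its Jacobian has small multiplicative oscillation around some $b$, then compensating inside its domain by a ratio-set value lying in an interval still of multiplicative length greater than $\lambda^{-1}$); applying it to $[Cr,r']$ and $[r'',C^{-1}r]$ with $C\lambda^{-1}r<r'<r<r''<\lambda C^{-1}r$ yields your two straddling pieces. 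With that correction, your proof goes through.
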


The proof relies on a maximality argument based on the following lemma.

\begin{lemma} \label{bounded_small}
Let $(X,\mu)$ be a standard probability space and $\mathcal{R} \subset X \times X$ an ergodic equivalence relation of type $\III_\lambda$ with $0 <\lambda \leq 1$. Let $A,B \in \mathfrak{P}(X)$ be two nonzero subsets and $c_1,c_2 > 0$ two constants such that $ c_1 < \lambda c_2$. Then we can find a nonzero element $\pi \in [[\cR]]$ such that $\dom(\pi) \subset A$, $\ran(\pi) \subset B$ and
\[ c_1 \leq \frac{\rd (\mu \circ \pi)}{\rd \mu}(x) \leq c_2 \]
for a.e.\ $x \in \dom(\pi)$.
\end{lemma}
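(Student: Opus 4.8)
The plan is to prove the lemma in two moves: first connect $A$ to $B$ by a single partial isomorphism whose Radon--Nikodym derivative is essentially constant, and then correct that constant so that it falls inside $[c_1,c_2]$ by composing with a partial isomorphism supported in a corner of $\cR$ whose derivative is close to a suitable power of $\lambda$. Throughout I will use the identity $\frac{\rd(\mu\circ\theta)}{\rd\mu}(x)=\delta_\mu(\theta(x),x)$ for $\theta\in[[\cR]]$ and $x\in\dom(\theta)$, which recasts the problem as one about the cocycle $\delta_\mu$, together with the formula $\rS(\cR)=\bigcap_{U\neq\emptyset}\overline{\Range}(\delta_\mu|_{\cR_U})$ and the type classification: since $\cR$ is of type $\III_\lambda$, one has $\lambda^\Z\subset\rS(\cR)$ when $\lambda\in(0,1)$, while $\rS(\cR)\cap\R^+_0=\R^+_0$ when $\lambda=1$. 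Note that $\rS(\cR)$ is symmetric under inversion, a fact that will make the orientation of the cocycle irrelevant below.

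First I would invoke ergodicity of $\cR$ to produce a nonzero $\phi\in[[\cR]]$ with $\dom(\phi)\subset A$ and $\ran(\phi)\subset B$. The function $\rho_\phi:=\frac{\rd(\mu\circ\phi)}{\rd\mu}$ is a positive measurable function on $\dom(\phi)$; picking a value $c$ in its essential range and restricting $\phi$ to the positive-measure set where $\rho_\phi$ lies within a factor $(1\pm\delta)$ of $c$, I may assume $\rho_\phi$ is essentially equal to $c$ up to a small multiplicative error. Now set $U:=\dom(\phi)$ and apply the $\rS$-formula in the corner $\cR_U$. The hypothesis $c_1<\lambda c_2$ says that the open multiplicative interval $(c_1/c,\,c_2/c)$ has ratio $c_2/c_1>1/\lambda$, so in logarithmic coordinates it is an interval of length exceeding $|\log\lambda|$; hence it contains a point $w$ of $\lambda^\Z$ when $\lambda\in(0,1)$, and a point $w$ of the dense set $\rS(\cR)\cap\R^+_0$ when $\lambda=1$. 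Since $\overline{\Range}(\delta_\mu|_{\cR_U})\supset\rS(\cR)$ and this set is symmetric under inversion, for small $\epsilon>0$ the set of $(x,y)\in\cR_U$ with $\delta_\mu(x,y)$ within $\epsilon$ of $w^{-1}$ has positive measure; decomposing $\cR_U$ into graphs of partial isomorphisms, one such piece yields a nonzero $\sigma\in[[\cR_U]]$ with $\rho_\sigma:=\frac{\rd(\mu\circ\sigma)}{\rd\mu}$ within a small error of $w$ on $\dom(\sigma)$.

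Finally I would set $\pi:=\phi\circ\sigma$. Since $\dom(\sigma),\ran(\sigma)\subset U=\dom(\phi)$, this is a nonzero element of $[[\cR]]$ with $\dom(\pi)=\dom(\sigma)\subset A$ and $\ran(\pi)=\phi(\ran(\sigma))\subset\ran(\phi)\subset B$, and by the chain rule its derivative is $\rho_\pi(x)=\rho_\phi(\sigma(x))\,\rho_\sigma(x)$, which is within a small multiplicative error of $cw$. Choosing $\delta$ and $\epsilon$ small enough from the outset—using that $cw$ lies in the \emph{open} interval $(c_1,c_2)$—guarantees $c_1\leq\rho_\pi\leq c_2$ on $\dom(\pi)$, as required. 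The main obstacle is the correction step: this is exactly where the type $\III_\lambda$ hypothesis enters through the $\rS$-invariant formula, and the quantitative condition $c_1<\lambda c_2$ is precisely what makes the relevant multiplicative interval long enough to capture a value of $\delta_\mu$ available in every corner. The rest—propagating the two multiplicative errors so that their product stays in the closed interval, and the standard extraction of a partial isomorphism from a positive-measure subset of $\cR_U$—is routine bookkeeping.
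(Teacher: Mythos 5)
Your proposal is correct and follows essentially the same route as the paper's proof: connect $A$ to $B$ by a partial isomorphism whose Radon--Nikodym derivative is confined to a narrow multiplicative window, then correct the constant using the inclusion $\rS(\cR)\subset\overline{\Range}(\delta_\mu|_{\cR_U})$ in the corner $\cR_U$, where the hypothesis $c_1<\lambda c_2$ guarantees the target interval is long enough (in logarithmic coordinates, longer than $|\log\lambda|$) to meet $\rS(\cR)$. The only difference is bookkeeping: the paper fixes the window ratio at $\sqrt{\lambda c_2/c_1}$ and works with closed intervals, whereas you shrink to a near-constant value and absorb the errors into an open interval, which amounts to the same thing.
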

\begin{proof}
First we can find a nonzero $\psi \in [[\cR]]$ such that $U:=\dom(\psi) \subset A$ and $\ran(\psi) \subset B$. Moreover, by shrinking $\psi$ if necessary, we can assume that
\[ b \leq \frac{\rd (\mu \circ \psi)}{\rd \mu}(x) \leq b \sqrt{\lambda c_2/c_1} \]
for some constant $b > 0$ and for a.e.\ $x \in \dom(\psi)$.

Since $c_1/b < \lambda c_2 \left(b\sqrt{\lambda c_2/c_1}\right)^{-1}$ we know that
\[ \mathrm{S}(\cR) \cap \left[ c_1/b, c_2 \left(b\sqrt{\lambda c_2/c_1}\right)^{-1} \right] \neq \emptyset. \]
But we also know that
\[ \mathrm{S}(\cR) \subset \overline{\mathrm{Range}}({\delta_\mu}|_{\cR_{U}}). \]
Hence, we obtain that 
\[ \delta_\mu^{-1}\left( \left[ c_1/b, c_2 \left(b\sqrt{\lambda c_2/c_1}\right)^{-1} \right] \right) \in \mathfrak{P}(\cR) \]
has a non-trivial intersection with $U \times U$. Therefore, we can find a nonzero $\phi \in [[\cR]]$ such that $\dom(\phi),\ran(\phi) \subset U$ and
\[ c_1/b \leq \frac{\rd (\mu \circ \phi)}{\rd \mu}(x) \leq c_2 \left(b\sqrt{\lambda c_2/c_1}\right)^{-1} \]
for a.e.\ $x \in \dom(\phi)$. Hence $\pi=\psi \phi \in [[\cR]]$ is a nonzero element that satisfies the condition we wanted.
\end{proof}

\begin{proof}[Proof of Theorem \ref{bounded_typeIII}]
Take any constant $c \in ]\lambda^{-1}C,1 [$. Consider the set $\Omega$ of all elements $\theta \in [[\mathcal{R}]]$ such that for a.e.\ $x \in \dom(\theta)$ we have
\[ C\frac{\mu(B)}{\mu(A)}   \leq \frac{\rd (\mu \circ \theta)}{\rd \mu}(x)  \leq C^{-1}\frac{\mu(B)}{\mu(A)}  \]
and 
\[c \frac{\mu(B)}{\mu(A)}\mu(A \setminus \dom(\theta))  \leq \mu(B \setminus \ran(\theta)) \leq c^{-1}\frac{\mu(B)}{\mu(A)}\mu(A \setminus \dom(\theta)). \]
The poset $\Omega$ is closed in $[[\mathcal{R}]]$ hence it is inductive. Let $\theta \in \Omega$ be a maximal element and let us show that $\mu(A \setminus \dom(\theta)) = 0$ and $\mu(B \setminus \ran(\theta)) = 0$.

Suppose that \[c \frac{\mu(B)}{\mu(A)}\mu(A \setminus \dom(\theta))  < \mu(B \setminus \ran(\theta)). \] 
Then we will contradict the maximality of $\theta$. Since $\mathcal{R}$ is of type $\III_\lambda$ and $c^{-1} < \lambda C^{-1}$, then by Lemma \ref{bounded_small}, we can find a nonzero $\pi \in [[\mathcal{R}]]$ with $\dom(\pi) \subset A \setminus \dom(\theta)$ and $\ran(\pi) \subset B \setminus \ran(\theta)$ such that for a.e.\ $x \in \dom(\pi)$ we have
\[ c^{-1}\frac{\mu(B)}{\mu(A)}  \leq \frac{\rd (\mu \circ \pi)}{\rd \mu}(x)   \leq C^{-1} \frac{\mu(B)}{\mu(A)}.\]

Moreover, since \[c \frac{\mu(B)}{\mu(A)}\mu(A \setminus \dom(\theta))  < \mu(B \setminus \ran(\theta)), \] we can choose $\pi$ to be small enough so that \[c \frac{\mu(B)}{\mu(A)}\mu(A \setminus \dom(\theta'))  \leq \mu(B \setminus \ran(\theta')), \] 
 where $\theta'=\theta+\pi$. Then it is easy to check, by construction, that $\theta' \in \Omega$ and this contradicts the maximality of $\theta$. Hence we must have \[c \frac{\mu(B)}{\mu(A)}\mu(A \setminus \dom(\theta)) = \mu(B \setminus \ran(\theta)). \]  Similarly, by exchanging the roles of $\theta$ and $\theta^{-1}$, we can show that \[ \mu(B \setminus \ran(\theta)) =c^{-1} \frac{\mu(B)}{\mu(A)}\mu(A \setminus \dom(\theta)). \]  Since $c < 1$, this implies that \[ \mu(B \setminus \ran(\theta)) = \mu(A \setminus \dom(\theta)) = 0, \] as we wanted.
\end{proof}

\begin{corollary}\label{cor-dense}
Let $(X,\mu)$ be a standard probability space and $\mathcal{R} \subset X \times X$ an ergodic equivalence relation of type $\III_\lambda$ with $0 <\lambda \leq 1$. Then the $\mu$-bounded elements are dense in $[\mathcal{R}]$.
\end{corollary}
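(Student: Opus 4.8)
The plan is to show that every $\theta \in [\mathcal{R}]$ can be approximated in the metric $\rd_\mu$ by $\mu$-bounded elements. The only obstruction to $\theta$ itself being $\mu$-bounded is that its Radon--Nikodym derivative $\delta := \frac{\rd(\mu \circ \theta)}{\rd\mu} \in \rL^0(X,\R^+_0)$ may fail to have $\log\delta$ bounded. The key observation is that, since $\delta$ is an a.e.\ finite and a.e.\ strictly positive measurable function, $\log\delta$ is in fact bounded off a set of arbitrarily small measure. This reduces the problem to ``repairing'' $\theta$ on that small bad set in a $\mu$-bounded way, which is precisely what the appendix provides.

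Concretely, I would fix $\theta \in [\mathcal{R}]$ and $\varepsilon > 0$, set $\delta = \frac{\rd(\mu\circ\theta)}{\rd\mu}$, and choose $M > 0$ large enough that the set $A := \{x \in X : |\log\delta(x)| > M\}$ satisfies $\mu(A) < \varepsilon$; this is possible because the sets $\{|\log\delta| > M\}$ decrease to a null set as $M \to \infty$. If $\mu(A) = 0$ then $\theta$ is already $\mu$-bounded and there is nothing to do, so assume $\mu(A) > 0$. Since $\theta$ is a nonsingular automorphism, $B := \theta(A)$ is again a nonzero element of $\mathfrak{P}(X)$. Applying Theorem \ref{bounded_typeIII} to the nonzero subsets $A, B$, I obtain a $\mu$-bounded partial isomorphism $\pi : A \to B$ in $[[\mathcal{R}]]$, whose derivative $\frac{\rd(\mu\circ\pi)}{\rd\mu}$ is bounded above and below away from $0$. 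I then define $\theta' \in [\mathcal{R}]$ by $\theta' = \theta$ on $X \setminus A$ and $\theta' = \pi$ on $A$. Because $\theta$ is a bijection, it maps $X \setminus A$ onto $X \setminus B$, while $\pi$ maps $A$ bijectively onto $B$, so $\theta'$ is a genuine full-group element. Its derivative equals $\delta$ on $X \setminus A$, where $|\log\delta| \leq M$, and equals $\frac{\rd(\mu\circ\pi)}{\rd\mu}$ on $A$, which is bounded; hence $\log\left(\frac{\rd(\mu\circ\theta')}{\rd\mu}\right)$ is bounded and $\theta'$ is $\mu$-bounded. Finally $\theta$ and $\theta'$ agree on $X \setminus A$, so $\rd_\mu(\theta,\theta') \leq \mu(A) < \varepsilon$, which gives density.

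The only genuinely nontrivial input is the existence of the $\mu$-bounded repair map $\pi$ between arbitrary nonzero subsets, and this is exactly where the type $\III_\lambda$ hypothesis enters, via Theorem \ref{bounded_typeIII}; everything else is a cut-and-paste bookkeeping argument. Thus I expect no real obstacle in the corollary itself: the difficulty has already been isolated and resolved in the appendix, and the minor points to verify are simply that the piecewise-defined $\theta'$ is a well-defined bijection in $[\mathcal{R}]$ and that its log-derivative is bounded by combining the two regimes.
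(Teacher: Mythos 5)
Your proof is correct and follows essentially the same route as the paper's: remove a small set $A$ on which the log-derivative is unbounded, repair $\theta$ on $A$ using the $\mu$-bounded partial isomorphism $\pi : A \to \theta(A)$ supplied by Theorem \ref{bounded_typeIII}, and note that the patched element agrees with $\theta$ off $A$. The only difference is that you spell out explicitly how to choose $A$ (as $\{|\log\delta| > M\}$ for $M$ large), a detail the paper leaves implicit.
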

\begin{proof}
Take $\theta \in [\cR]$. Take $A$ such that the partial isomorphism $\theta|_{X \setminus A}$ is $\mu$-bounded and let $B:=\theta(A)$. By Theorem \ref{bounded_typeIII}, we can find a partial isomorphism $\pi : A \rightarrow B$ which is $\mu$-bounded. Then $\theta'=\theta|_{X \setminus A} + \pi \in [\cR]$ is $\mu$-bounded. Since we can choose $A$ and $B=\theta(A)$  to be arbitrarily small, we see that $\theta'$ is arbitrarily close to $\theta$. Hence the $\mu$-bounded elements are dense in $[\cR]$.
\end{proof}

\begin{corollary}
Let $(X,\mu)$ be a standard probability space and $\mathcal{R} \subset X \times X$ an ergodic equivalence relation of type $\III_\lambda$ with $0 <\lambda \leq 1$. If $\nu$ is another probability measure on $X$ which is equivalent to $\mu$ then for any $C < \lambda$, we can find $\theta \in [\mathcal{R}]$ such that
\[ C \mu \leq \nu \circ \theta \leq C^{-1}\mu. \]
\end{corollary}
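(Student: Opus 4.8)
The plan is to replay, essentially verbatim, the maximality argument that proves Theorem \ref{bounded_typeIII}, with the single change that the target copy of $X$ is now measured with $\nu$ instead of $\mu$. Writing $g=\frac{\rd\nu}{\rd\mu}\in\rL^0(X,\R^+_0)$, the first thing I would record is the transformation rule for densities: for any $\pi\in[[\mathcal{R}]]$ and a.e.\ $x\in\dom(\pi)$,
\[ \frac{\rd(\nu\circ\pi)}{\rd\mu}(x)=g(\pi(x))\,\frac{\rd(\mu\circ\pi)}{\rd\mu}(x), \]
which is just the change-of-variables identity $\int_{\pi(E)}g\,\rd\mu=\int_E(g\circ\pi)\,\rd(\mu\circ\pi)$ for $E\subset\dom(\pi)$. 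In particular, for an automorphism $\theta\in[\mathcal{R}]$ the assertion $C\mu\leq\nu\circ\theta\leq C^{-1}\mu$ is equivalent to $C\leq\frac{\rd(\nu\circ\theta)}{\rd\mu}\leq C^{-1}$ a.e.

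The key new ingredient is a $\nu$-version of Lemma \ref{bounded_small}: given nonzero $A,B\in\mathfrak{P}(X)$ and constants $c_1<\lambda c_2$, one can find $\pi\in[[\mathcal{R}]]$ with $\dom(\pi)\subset A$, $\ran(\pi)\subset B$ and $c_1\leq\frac{\rd(\nu\circ\pi)}{\rd\mu}\leq c_2$ on $\dom(\pi)$. The point is that $g$ need not be bounded, but one only needs it nearly constant on the target. I would fix $\varepsilon>0$ with $(1+\varepsilon)c_1<\lambda c_2$; since $g>0$ is measurable and $\mu(B)>0$, there exist $g_0>0$ and a nonzero $B'\subset B$ with $g_0\leq g\leq(1+\varepsilon)g_0$ on $B'$. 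Applying Lemma \ref{bounded_small} with target $B'$ and the constants $\frac{c_1}{g_0}<\lambda\cdot\frac{c_2}{(1+\varepsilon)g_0}$ yields $\pi$ with $\ran(\pi)\subset B'$ and $\frac{c_1}{g_0}\leq\frac{\rd(\mu\circ\pi)}{\rd\mu}\leq\frac{c_2}{(1+\varepsilon)g_0}$; multiplying by $g\circ\pi\in[g_0,(1+\varepsilon)g_0]$ and using the density rule above gives exactly $c_1\leq\frac{\rd(\nu\circ\pi)}{\rd\mu}\leq c_2$.

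With this in hand, the maximality argument is a direct transcription of the proof of Theorem \ref{bounded_typeIII} in the case $A=B=X$ (so that $\mu(X)=\nu(X)=1$). I would fix $c\in(\lambda^{-1}C,1)$ and let $\Omega$ be the poset of $\theta\in[[\mathcal{R}]]$ satisfying $C\leq\frac{\rd(\nu\circ\theta)}{\rd\mu}\leq C^{-1}$ on $\dom(\theta)$ together with the balance condition $c\,\mu(X\setminus\dom\theta)\leq\nu(X\setminus\ran\theta)\leq c^{-1}\mu(X\setminus\dom\theta)$. As before $\Omega$ is closed, hence inductive; let $\theta$ be maximal. If $c\,\mu(X\setminus\dom\theta)<\nu(X\setminus\ran\theta)$, I would apply the $\nu$-version of Lemma \ref{bounded_small} to $A_0=X\setminus\dom\theta$, $B_0=X\setminus\ran\theta$ with $c_1=c^{-1}$, $c_2=C^{-1}$ (legitimate since $c>\lambda^{-1}C$ forces $c^{-1}<\lambda C^{-1}$, and $C<c^{-1}<C^{-1}$) to enlarge $\theta$ to some $\theta'\in\Omega$, contradicting maximality. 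Thus equality holds; running the symmetric argument on $\theta^{-1}$, which satisfies $C\leq\frac{\rd(\mu\circ\theta^{-1})}{\rd\nu}\leq C^{-1}$, gives $\nu(X\setminus\ran\theta)=c^{-1}\mu(X\setminus\dom\theta)$, and since $c<1$ both leftover masses must vanish. Hence $\theta\in[\mathcal{R}]$ and $C\mu\leq\nu\circ\theta\leq C^{-1}\mu$.

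The only genuine obstacle is the possible unboundedness of $g=\frac{\rd\nu}{\rd\mu}$: composing a $\mu$-bounded automorphism with anything does not control $\nu\circ\theta$, so the type ${\rm III}_\lambda$ ratio-set input cannot be bypassed. It enters precisely through Lemma \ref{bounded_small}, applied after localizing the target to a near-level-set of $g$; everything else is a routine replay of Theorem \ref{bounded_typeIII}.
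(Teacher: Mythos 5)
Your proof is correct, but it takes a different route from the paper's. The paper does not rerun the maximality argument at all: it forms the amplified equivalence relation $\cR \otimes \mathcal{S}_2$ on $X \times \{0,1\}$, equips it with the measure $\omega = \tfrac{1}{2}(\mu \otimes \delta_0 + \nu \otimes \delta_1)$, and applies Theorem \ref{bounded_typeIII} directly to $A = X \times \{0\}$ and $B = X \times \{1\}$ (which have equal $\omega$-mass $\tfrac12$); a partial isomorphism $A \to B$ in $[[\cR \otimes \mathcal{S}_2]]$ is precisely an element $\theta \in [\cR]$, and the bound $C \leq \frac{\rd(\omega\circ\theta)}{\rd\omega} \leq C^{-1}$ on $A$ translates verbatim into $C\mu \leq \nu\circ\theta \leq C^{-1}\mu$. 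That two-line reduction absorbs the ``unbounded density $\rd\nu/\rd\mu$'' difficulty into the measure $\omega$ once and for all, whereas you handle it by hand: you prove a $\nu$-analogue of Lemma \ref{bounded_small} by localizing the target to a near-level set of $g = \rd\nu/\rd\mu$ and then replay the Zorn's-lemma argument of Theorem \ref{bounded_typeIII} with the balance condition $c\,\mu(X\setminus\dom\theta) \leq \nu(X\setminus\ran\theta) \leq c^{-1}\mu(X\setminus\dom\theta)$. I checked your steps — the density transformation rule, the inequality $(1+\varepsilon)c_1 < \lambda c_2$ needed to invoke Lemma \ref{bounded_small} on the level set, the verification that $C \leq c^{-1} \leq C^{-1}$ so the new piece stays in $\Omega$, and the symmetric argument for $\theta^{-1}$ — and they all go through. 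Your approach is longer but self-contained at the level of the lemma; the paper's is shorter and illustrates that amplification by a finite transitive relation lets one treat two mutually equivalent measures as the restrictions of a single measure to two corners, which is a trick worth internalizing (it reappears in the second half of the proof of Theorem \ref{theorem: uniqueness}).
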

\begin{proof}
Let $\mathcal{S}_2$ be the transitive equivalence relation on the set $\{0,1 \}$ and consider the product equivalence relation $\cR \otimes \mathcal{S}_2$ on $X \times \{0,1 \}$. On $X \times \{0,1 \}$, put the measure $\omega=\frac{1}{2}(\mu \otimes \delta_0 + \nu \otimes \delta_1)$. Let $A= X \times \{0\}$ and $B=X \times \{1 \}$. Then $\omega(A)=\omega(B)=\frac{1}{2}$. Thus, Theorem \ref{bounded_typeIII} provides us with a partial isomorphism $\theta : A \rightarrow B$ in $[[ \mathcal{R} \otimes \mathcal{S}_2 ]]$ such that 
\[ C \leq \frac{\rd (\omega \circ \theta)}{\rd \omega}(x)  \leq C^{-1} \]
for a.e.\ $x \in A$. Viewing $\theta$ as an element of $[ \cR ]$, we obtain
\begin{equation*}
 C \leq \frac{\rd ( \nu \circ \theta )}{\rd \mu} \leq C^{-1}. \qedhere
\end{equation*}
\end{proof}

\bibliographystyle{plain}

\end{document}